\newcommand*{\fullref}[1]{\hyperref[{#1}]{\ref*{#1}. \nameref*{#1}}} 
\newcommand{\sO}{\mathcal{O}}
\newcommand{\sP}{\mathscr{P}}
\newcommand{\Num}{\operatorname{Num}}
\newcommand{\set}[1]{\{{#1}\}}
\newcommand{\Ker}{\operatorname{Ker}}
\definecolor{due}{RGB}{0,76,147}
\theoremstyle{definition}
\newtheorem{defi}{Definition}[section]
\theoremstyle{plain}
\newtheorem{thm}[defi]{Theorem}
\newtheorem{introthm}{Theorem}[section]
\newtheorem{prop}[defi]{Proposition}
\newtheorem{cor}[defi]{Corollary}
\newtheorem{lemma}[defi]{Lemma}
\theoremstyle{remark}
\newtheorem*{question}{Question}
\newtheorem{rmk}[defi]{Remark}
\newtheorem{ex}[defi]{Example}
\newtheorem{nota}[defi]{Notation}
\newtheorem*{notation}{Notation}
\theoremstyle{definition}
\newtheorem*{ack}{Acknowledgement}
\newcommand{\Q}{{\mathbb Q}}
\newcommand{\Z}{{\mathbb Z}}
\newcommand{\Pic}{\operatorname{Pic}}
\newcommand{\End}{\operatorname{End}}
\newcommand{\Hom}{\operatorname{Hom}}
\newcommand{\ord}{\operatorname{ord}}
\newcommand{\xdashrightarrow}[2][]{\ext@arrow 0359\rightarrowfill@@{#1}{#2}}
\newcommand{\xdashleftarrow}[2][]{\ext@arrow 3095\leftarrowfill@@{#1}{#2}}
\newcommand{\xdashleftrightarrow}[2][]{\ext@arrow 3359\leftrightarrowfill@@{#1}{#2}}
\def\rightarrowfill@@{\arrowfill@@\relax\relbar\rightarrow}
\def\leftarrowfill@@{\arrowfill@@\leftarrow\relbar\relax}
\def\leftrightarrowfill@@{\arrowfill@@\leftarrow\relbar\rightarrow}
\def\arrowfill@@#1#2#3#4{%
  $\m@th\thickmuskip0mu\medmuskip\thickmuskip\thinmuskip\thickmuskip
   \relax#4#1
   \xleaders\hbox{$#4#2$}\hfill
   #3$%
}
\numberwithin{equation}{section}
\begin{document}
	\title[Brauer group of bielliptic surfaces]{On the Brauer group of bielliptic surfaces}
	\author{Eugenia Ferrari}
	\address{Department of Mathematics\\ University of Bergen, All\'egaten 41, Bergen, Norway}
\email{\url{eugenia.ferrari@uib.no}, \url{tirabassi@math.su.se}, \url{magnus.vodrup@uib.no}}
\author{Sofia Tirabassi}
\address{Department of Mathematics\\ Stockholm University, Kraftriket hus 6, Stockholm, Sweden}
\email{\url{jonasb@math.su.se}, \url{tirabassi@math.su.se}}
\author{Magnus Vodrup}


\clearpage\maketitle
\vspace{-.8cm}\begin{center}
    {\scshape with an appendix by Jonas Bergstr\"om and Sofia Tirabassi}
\end{center}
\thispagestyle{empty}
\vspace{.5cm}
\begin{abstract}
We provide explicit generators of the torsion of the second cohomology of bielliptic surfaces, and we use these to study the pullback map between the Brauer group of a bielliptic surface and that of its canonical cover. We completely describe the loci where this map is injective, trivial, or neither one of them. Explicit examples are constructed for all possible different behaviors of this morphism.
\end{abstract}

\setcounter{tocdepth}{1}
\tableofcontents
\section{Introduction}
Given a smooth complex projective variety $Z$ its (cohomological) Brauer group is defined as $\operatorname{Br}(Z):=H_{\text{\'et}}^2(Z,\sO_Z^*)_{\text{tor}}$. A morphism of projective varieties $f:Z\rightarrow Y$ induces, via pullbacks, a homomorphism $f_{\operatorname{Br}}:\operatorname{Br}(Y)\rightarrow\operatorname{Br}(Z)$, which we call the \emph{Brauer map induced by $f$}. In \cite{Be2009} Beauville studies this map in the case of a complex Enriques surface $S$ and that of its K3 canonical cover $\pi:X\rightarrow S$. More precisely the author of \cite{Be2009} identifies the locus in the moduli space of Enriques surfaces in which $\pi_{\operatorname{Br}}$ is not injective (and therefore trivial). In this paper we carry out a similar investigation in the case of bielliptic surfaces.\par
A bielliptic surface is constructed by taking the quotient of a product of ellipic curves $A\times B$ by the action of a finite group $G$. They were classified in 7 different types by Bagnera--De Franchis (see \cite{BdF}, or \cite{Suwa1970OnHS} for a presentation in a more modern language), as illustrated in Table \ref{table:ta}.
\begin{table}[hb]
\renewcommand{\arraystretch}{1.3}
\centering
\begin{tabular}{l*{4}{c}}
\hline
Type & $G$  & Order of $\omega_S$ in $\mathrm{Pic}(S)$ & $H^2(S,\Z)_{\text{tor}}$\\
\hline
1 & $\Z/2\Z$  & 2& $\Z/2\Z\times\Z/2\Z$\\

2 & $\Z/2\Z \times \Z/2\Z$ & 2 &$\Z/2\Z$ \\

3  & $\Z/4\Z$  & 4 &$\Z/2\Z$\\

4  & $\Z/4\Z \times \Z/2\Z$  & 4 & 0\\

5  & $\Z/3\Z$  & 3 &$\Z/3\Z$\\

6 & $\Z/3\Z \times \Z/3\Z$ & 3 & 0\\

7 & $\Z/6\Z$ & 6 &0 \\ 
\hline
\end{tabular}
\caption{Types of bielliptic surfaces and torsion of their second cohomology.}\label{table:ta}
\vspace{-1.3cm}
\end{table}
Since the canonical bundle of a bielliptic surface $S$ is a torsion element in $\Pic(S)$, it can be used to define an \'etale cyclic cover $\pi:X\rightarrow S$, where $X$ is an abelian variety isogenous to $A\times B$. We then obtain a homomorphism between the respective Brauer groups: $\pi_{\operatorname{Br}}:\operatorname{Br}(S)\rightarrow\operatorname{Br}(X).$  A very natural question is the following.
\begin{question}
When is $\pi_{\text{Br}}$ injective? When is it trivial?
\end{question}

As for Enriques surfaces, using the long exact exponential sequence and Poincaré duality, we have a non canonical isomorphism
$$\operatorname{Br}(S)\simeq H^2(X,\Z)_{\text{tor}},$$ so  from the fourth column of Table \ref{table:ta}, we easily see that this map is trivial when $S$ is of type 4, 6 or 7.  Thus we will limit ourselves to surfaces of type 1, 2, 3, and 5. We will find that the behavior of   the Brauer map depends heavily  on the geometry of the bielliptic surface $S$.\par

Bielliptic surfaces of type 2 and 3 admit (by a construction which is written explicitly only in \cite{Howie}) a degree 2 \'etale cover $\tilde{\pi}:\tilde{S}\rightarrow S$, with $\tilde{S}$ a bielliptic surface of type 1 (see Paragraph \ref{sub:biellipticover} for more details). We first investigate the properties of the induced Brauer map  for these types of covers finding how this behaves differently in the two cases:
\begin{introthm}\label{thmb}
\begin{enumerate}[leftmargin=0cm,itemindent=1cm,labelwidth=.5cm,labelsep=.1cm,align=left,label=(\alph*)]
    \item If $S$ is of type 2, then $\tilde{\pi}_{\operatorname{Br}}:\operatorname{Br}(S)\rightarrow\operatorname{Br}(\tilde{S})$ is trivial.
    \item If $S$ is of type 3, then $\tilde{\pi}_{\operatorname{Br}}:\operatorname{Br}(S)\rightarrow\operatorname{Br}(\tilde{S})$ is injective.
\end{enumerate}
\end{introthm}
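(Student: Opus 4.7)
The plan is to reduce the statement to a computation on integral second cohomology, and then to a comparison of Hochschild--Serre spectral sequences. Under the identification $\operatorname{Br}(S) \cong H^2(S,\Z)_{\text{tor}}$ coming from the exponential sequence, the Brauer map $\tilde\pi_{\operatorname{Br}}$ coincides with the ordinary pullback $\tilde\pi^* \colon H^2(S,\Z)_{\text{tor}} \to H^2(\tilde S,\Z)_{\text{tor}}$; since in both cases $\operatorname{Br}(S) = \Z/2\Z$, it suffices to compute the image of the unique nontrivial generator $\alpha$, which has been produced explicitly in the previous sections of the paper.

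Writing $S = X/G$ and $\tilde S = X/H$ with $X = A\times B$ and $H\subset G$ the relevant index-two subgroup, I would compare the Hochschild--Serre spectral sequences
\[
H^p\bigl(G, H^q(X,\Z)\bigr) \Rightarrow H^{p+q}(S,\Z), \qquad H^p\bigl(H, H^q(X,\Z)\bigr) \Rightarrow H^{p+q}(\tilde S,\Z)
\]
via the restriction $H\hookrightarrow G$; this realises $\tilde\pi^*$ on the associated graded. Because $H^\bullet(X,\Z)$ is torsion-free, the torsion of $H^2(S,\Z)$ is a subquotient of $H^1(G,H^1(X,\Z)) \oplus H^2(G,\Z)$, both summands being computable from the known $G$-action on $H^1(A,\Z) \oplus H^1(B,\Z)$. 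Using the explicit geometric description of $\alpha$ from earlier in the paper, one identifies which filtration piece it occupies and then computes the restriction by a direct cocycle calculation.

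For part (a), $G = (\Z/2\Z)^2$, and I expect $\alpha$ to correspond to a class in the above filtration whose restriction to the chosen cyclic $H \simeq \Z/2\Z$ vanishes: the representing 2-cocycle becomes a coboundary upon restriction to $H\times H$, so $\tilde\pi^*\alpha = 0$. For part (b), $G = \Z/4\Z = \langle T\rangle$ acts on $H^1(A,\Z) \simeq \Z^2$ by rotation through $\pi/2$, yielding $H^1(G, H^1(A,\Z)) \simeq \Z/2\Z$; the class $\alpha$ is the nonzero element there, and the standard restriction formula $[m]\mapsto[(1+T)m]$ sends it to a nonzero element of $H^1(H,H^1(A,\Z)) \simeq (\Z/2\Z)^2$, so $\tilde\pi^*\alpha \neq 0$. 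The main obstacle is the bookkeeping needed to match the abstract filtration-piece description of $\alpha$ with the concrete geometric generator from the earlier sections, and to check that the $d_2$ (and higher) differentials are compatible with the restriction. A more elementary alternative is to realise $\alpha$ as $c_1$ of an explicit torsion line bundle on $S$ (for instance, a suitable power of $\omega_S$, or a class coming from the multiple fibres of an elliptic fibration of $S$) and to compute the Chern class of its pullback to $\tilde S$ by pulling back divisors; the cover $\tilde\pi$ then trivialises this line bundle in type 2 but not in type 3.
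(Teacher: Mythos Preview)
Your reduction in the first paragraph contains a fatal error. The exponential sequence does \emph{not} identify $\operatorname{Br}(S)$ with $H^2(S,\Z)_{\text{tor}}$; since $h^2(\sO_S)=h^3(\sO_S)=0$ it gives a \emph{functorial} isomorphism $\operatorname{Br}(S)\simeq H^3(S,\Z)_{\text{tor}}$, and the further identification $H^3(S,\Z)_{\text{tor}}\simeq H^2(S,\Z)_{\text{tor}}$ passes through Poincar\'e duality and the universal--coefficient theorem, which are \emph{not} compatible with pullback (Poincar\'e duality exchanges $\tilde\pi^*$ with the transfer). Consequently $\tilde\pi_{\operatorname{Br}}$ has no reason to coincide with $\tilde\pi^*$ on $H^2(-,\Z)_{\text{tor}}$, and your Hochschild--Serre computation (as well as the ``elementary alternative'' via $c_1$ of a torsion line bundle) is computing the wrong map.

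This is not a hypothetical worry: in type~2 the two maps genuinely disagree. Using the description in Section~\ref{sec:generators}, the four multiple fibres of $S$ split into two pairs $D_1^S,D_2^S$ (coming from $2$--torsion of $B$) and $D_3^S,D_4^S$ (coming from points with $2y=\theta_2$). One checks that $\tilde\pi^*D_1^S$ is the sum of two reduced multiple fibres $D_1+D_2$ of $\tilde S$, while $\tilde\pi^*D_3^S$ is a single \emph{smooth} fibre $F$; hence
\[
\tilde\pi^*[D_1^S-D_3^S]=[D_1+D_2-F]=[D_1-D_2]\neq 0\quad\text{in }H^2(\tilde S,\Z)_{\text{tor}}.
\]
In particular $[D_1^S-D_3^S]$ is the nontrivial torsion class on $S$ and $\tilde\pi^*$ on $H^2_{\text{tor}}$ is \emph{injective}---the opposite of what you need for part~(a). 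So your proposed method would output the wrong answer.

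The paper avoids this pitfall entirely: it never tries to transport $\tilde\pi_{\operatorname{Br}}$ to $H^2$. Instead it invokes Beauville's identification $\Ker(\tilde\pi_{\operatorname{Br}})\simeq \Ker(\operatorname{Nm})/(1-\tilde\sigma^*)\Pic(\tilde S)$ and computes the norm map on $\Pic(\tilde S)$ directly. For type~2 one exhibits, via a ``$\Pic^0$--trick'', an explicit line bundle in $\Ker(\operatorname{Nm})$ whose $c_1$ is a torsion class not in $(1-\tilde\sigma^*)H^2(\tilde S,\Z)$, forcing $\Ker(\tilde\pi_{\operatorname{Br}})\neq 0$. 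For type~3 one shows every element of $\Ker(\operatorname{Nm})$ is numerically trivial and then that such elements already lie in $\operatorname{Im}(1-\tilde\sigma^*)$. If you want to salvage a spectral--sequence approach, you would have to run it for $H^3(-,\Z)$ (or for $\sO^*$--cohomology) rather than $H^2$.
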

The main tool behind our argument is a result of Beauville (see Section \ref{sec:oes} for more details) which states that the kernel of the Brauer map of a cyclic \'etale cover $X\rightarrow X/\sigma$ is naturally isomorphic to the kernel of the norm map $\operatorname{Nm}:\Pic(X)\rightarrow\Pic(X/\sigma)$ quotiented by $\operatorname{Im}(1-\sigma^*)$. We prove that a line bundle on $\tilde{S}$ is in the kernel of the norm map only if it is numerically trivial. Then we reach our conclusion by carefully computing the norm map of numerically trivial line bundles. The different behaviors of the two type of surfaces is motivated by the different "values" taken by the norm map on torsion elements of $H^2(\tilde{S},\Z)$: in the type 2 case they are sent to topologically trivial line bundles, while this is not true in the type 3 case.\par
Theorem \ref{thmb} is interesting in itself, and some parts of its proof will be useful in order to study the Brauer map  to the canonical cover for bielliptic surfaces of type 2.\par
We then turn our attention to the main focus of this paper: to give a complete description for the Brauer map to the canonical cover of any bielliptic surface. We first show, similarly to what happens for Enriques surfaces, that  the Brauer map is injective for a general bielliptic surface. In particular we show the following general statement:
\begin{introthm}\label{thma}
Given a bielliptic surface $S$, let $\pi:X\rightarrow S$ be its canonical cover. If the two elliptic curves $A$ and $B$ are not isogenous, then the pullback map
$$
\pi_{\operatorname{Br}}:\operatorname{Br}(S)\rightarrow\operatorname{Br}(X)$$
is injective.
\end{introthm}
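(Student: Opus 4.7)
The plan is to apply the description of the kernel of the Brauer map recalled in Section~\ref{sec:oes}: for the cyclic \'etale cover $\pi:X\to S$ with generator $\sigma$ of order $n=\ord(\omega_S)$, we have a natural isomorphism $\Ker(\pi_{\operatorname{Br}}) \cong \Ker(\operatorname{Nm})/\operatorname{Im}(1-\sigma^*)$ inside $\Pic(X)$, with $\operatorname{Nm}:\Pic(X)\to\Pic(S)$ the geometric norm; the task is to show this quotient is trivial. As a first reduction, the non-isogeny hypothesis gives $\NS(X) \cong \NS(A)\oplus \NS(B) \cong \Z^2$ (the K\"unneth summand $\Hom(A,\hat B)$ vanishes), and $\sigma^*$ acts trivially on $\NS(X)$ because $\sigma$ is a translation on the $A$-factor composed with a finite-order group automorphism on the $B$-factor. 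Combining the identity $\pi^*\circ\operatorname{Nm} = \operatorname{Nm}_G := \sum_{i=0}^{n-1}\sigma^{*i}$ with torsion-freeness of $\NS(X)$ forces every $L\in \Ker(\operatorname{Nm})$ to have zero $\NS$-class, hence $\Ker(\operatorname{Nm})\subseteq \Pic^0(X)$.

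I would then analyze the two isogeny summands of $\Pic^0(X)\sim \hat A\oplus\hat B$, on which $\sigma^*$ acts as $(\id,\zeta_n^{-1})$. On $\hat B$ one has $\operatorname{Nm}_G=\sum_k\zeta_n^{-k}=0$, so $\operatorname{Nm}$ takes values in the finite group $\Ker(\pi^*)=\langle\omega_S\rangle$; divisibility of $\hat B$ forces such a homomorphism to vanish, giving $\hat B\subseteq\Ker(\operatorname{Nm})$. Conversely $1-\zeta_n^{-1}$ is a nonzero endomorphism of the elliptic curve $\hat B$, hence surjective, so $\operatorname{Im}(1-\sigma^*)|_{\hat B}=\hat B$; the $\hat B$-summand contributes nothing to the quotient. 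On $\hat A$, $\Ker(\operatorname{Nm}_G)$ restricts to $\hat A[n]$, while $\operatorname{Im}(1-\sigma^*)|_{\hat A}$ equals the cyclic subgroup $\langle\phi_A(a_0)\rangle$, where $a_0\in A[n]$ is the translation part of $\sigma$ and $\phi_A:A\xrightarrow{\sim}\hat A$ is the principal polarization; this element arises as $\sigma^*L_f-L_f$ for any lift $L_f\in\Pic(X)$ of the fiber class $[\{0\}\times B]$.

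The decisive identification is $\Ker(\operatorname{Nm})|_{\hat A[n]}=\langle\phi_A(a_0)\rangle$. For this, I would exploit that the Albanese of $S$ is the elliptic curve $E:=A/\langle a_0\rangle$, so the projection $p_A:X\to A$ descends to the Albanese map $\bar p_A:S\to E$, fitting in a commutative square with the degree-$n$ isogeny $q:A\to E$. Given $\alpha\in\hat A[n]$, surjectivity of the dual isogeny $\hat q:\hat E\to\hat A$ allows one to write $\alpha=\hat q(\beta)=q^*\beta$ for some $\beta\in\hat E$, giving $p_A^*\alpha=\pi^*\bar p_A^*\beta$ and hence $\operatorname{Nm}(p_A^*\alpha)=n\bar p_A^*\beta$; since $\bar p_A^*:\hat E\to\Pic^0(S)$ is an isomorphism, this vanishes iff $\beta\in\hat E[n]$. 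Consequently $\Ker(\operatorname{Nm})|_{\hat A[n]}=\hat q(\hat E[n])$, which by standard duality is precisely the annihilator of $\Ker(q)=\langle a_0\rangle$ in $\hat A[n]$ under the Weil pairing. The alternating identity $e(\phi_A(a_0),a_0)=1$ places $\phi_A(a_0)$ in this annihilator, and since the annihilator of a cyclic order-$n$ subgroup of $A[n]\cong(\Z/n)^2$ is itself cyclic of order $n$, equality follows, closing the argument.

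The main obstacle is carrying out this last step uniformly across types~1, 2, 3, and~5: for types other than~1 the canonical cover $X=(A\times B)/K$ is only isogenous to $A\times B$, so $\hat X$ need not split as a product of elliptic curves. One must then track carefully through the isogeny which translation plays the role of $a_0$, which elliptic factor plays the role of $A$, and which polarization controls the Weil pairing, before the annihilator identification applies in general.
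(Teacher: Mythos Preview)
Your strategy is exactly the paper's: apply Beauville's criterion, reduce to $\Pic^0(X)$, and then treat the $\hat A$- and $\hat B$-factors separately. Your reduction to $\Pic^0$ via $\sigma^*$-triviality on $\NS(X)$ is a mild streamlining of the paper's index-$n$ sublattice argument; your treatment of $\hat B$ (divisibility forces $\operatorname{Nm}|_{\hat B}=0$, and $1-\sigma^*$ is a nonzero isogeny hence surjective) is the content of the paper's Lemma~\ref{lem:pbeta} phrased uniformly. On $\hat A$ the paper does not invoke the Weil pairing: it simply observes that $p_A^*\Pic^0(A/H)\cap\Ker(\operatorname{Nm})=\pi^*\Pic^0(S)[n]$ is cyclic of order $n$, and then exhibits the explicit element $p_A^*\gamma=(1-\sigma^*)p_A^*\sO_{A/H}(p_0)$ of order $n$ in $\operatorname{Im}(1-\sigma^*)$. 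Your $\phi_A(a_0)$ is precisely this $\gamma$, so the Weil-pairing identification is correct but unnecessary---a straight cardinality count suffices.

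One factual slip: for types~3 and~5 the group $G$ is cyclic, so $H$ is trivial and $X=A\times B$ on the nose; only type~2 has $X$ a genuine quotient $(A\times B)/\langle(\theta_1,\theta_2)\rangle$. Thus the ``obstacle'' you flag is confined to type~2, and there the paper's Proposition~\ref{prop:brabpic0} handles it by the same argument with $A':=A/\langle\theta_1\rangle$ and $B':=B/\langle\theta_2\rangle$ in place of $A$ and $B$: one still has projections $p_A:X\to A'$, $p_B:X\to B'$, $\sigma$ acts on the $A'$-direction by translation by the image $\tau'$ of $\tau$ (which has order $2$ since $\tau\neq\theta_1$) and on the $B'$-direction by $-1$, and the identical computation goes through. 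The only subtlety is that $p_A^*\hat{A'}+p_B^*\hat{B'}\to\Pic^0(X)$ is surjective but not injective; since you only need that every element of $\Pic^0(X)$ can be \emph{written} as $p_A^*\alpha\otimes p_B^*\beta$, this does not affect the argument.
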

When $A$ and $B$ are isogenous one encounters the first examples of bielliptic surfaces with non injective Brauer map. It turns out that the behavior of the Brauer map depends on the explicit action of the group $G$ on the product $A\times B$. This is showed in Theorems \ref{thm:injective}, \ref{thm:type1}, \ref{thm:t1trivial},  \ref{thm:type3}, \ref{thm:type5}, and \ref{thm:type 2}, where we give necessary and sufficient conditions for the Brauer map to be injective, trivial, and, in the case of type 1 surfaces (whose Brauer group is not simple), neither trivial nor injective. Our very explicit results allow us to construct examples of bielliptic surfaces exhibiting each of the possible behaviors of the Brauer map. Unfortunately the statements are  involved and it is not possible reproduce them here without a lengthy explanation of the notation used. The rough geometric picture is the following:
\begin{itemize}[leftmargin=0cm,itemindent=1cm,labelwidth=.5cm,labelsep=.1cm,align=left,]
    \item[{\bfseries Type 1:}] These  surfaces are constructed by choosing two elliptic curves $A$ and $B$ and a 2-torsion point on $A$. Thus the moduli space has dimension 2. In order to have a non injective Brauer map one can choose freely the elliptic curve $B$, but has only finitely many possibilities for $A$ and the 2-torsion point. Thus we obtain a 1- dimensional family. In Example \ref{not count} we show that uncountably many such surfaces exist. On the other hand only countably many type 1 bielliptic surfaces can have a trivial Brauer map to their canonical cover. In fact, to obtain a trivial Brauer map one has to choose the ellipic curve $B$ among those having complex multiplication.
    \item[{\bfseries Type 2:}] These  surfaces are constructed by choosing two elliptic curves $A$ and $B$ and two 2-torsion points, one on  $A$ and one on $B$. Hences the moduli space has dimension 1. Similarly to what happens in the previous case, in order to have a non injective (and hence trivial) Brauer map only the choice of the curve $B$ can be made freely, while $A$ must be taken among finitely many possibilities. We show  in this case (cf. Example \ref{last}(a)) that if the curves $A$ and $B$ are isomorphic, regardless of the choice of the torsion points, the Brauer map is trivial.
    \item[{\bfseries Type 3:}] These  surfaces are constructed by choosing one elliptic curve $A$ and a 4-torsion point on it. Therefore the moduli space has dimension 1. In order to have a non injective (and hence trivial) Brauer map, $A$ must be isogenous to the curve with $j$-invariant 1728. Thus there are only finitely many such surfaces.    \item[{\bfseries Type 5:}] These  surfaces are constructed by choosing one elliptic curve $A$ and a 3-torsion point on it. We deduce that the moduli space has dimension 1. In order to have a non injective (and hence trivial) Brauer map, $A$ must be isogenous to the curve with $j$-invariant 0. Thus, as in the previous case,  there are only finitely many such surfaces.   
\end{itemize}

The proof of Theorem \ref{thma} uses the same ideas as in the proof of Theorem \ref{thmb}. In fact we can leverage on the fact that $X$ and $S$ have the same Picard number (as for the case of a bielliptic cover) to show that line bundles in the kernel of the norm map are topologically trivial. The result is then obtained by showing that line bundles in $\Pic^0(X)$ which are also in the kernel of the norm map are always in $\operatorname{Im}(1-\sigma^*).$
As a corollary of both Theorem \ref{thmb} and \ref{thma} we find an example of isogeny between two abelian varieties $\varphi:X\rightarrow Y$ such that the corresponding group homomorphism $\varphi_{\operatorname{Br}}$ is not injective, see Section \ref{5.2}.\par

\indent This paper is organized as follows. Section \ref{sec:oes} contains all the background and preliminary results. More precisely we outline some classical facts on the geometry of bielliptic surfaces, and present the construction, due to Nuer, of the bielliptic covers of surfaces of type 2 and 3. We also expose the work of Beauville \cite{Be2009} which allows us to study the kernel of the Brauer map in terms of the norm homomorphism of the cover. We conclude the section by describing the Neron--Severi group of a product of elliptic curves. In Section \ref{sec:generators} we provide explicit generators for $H^2(S,\Z)_{\text{tor}}$, when $S$ is a bielliptic surface of type 1, 2, 3 or 5.  We prove Theorem \ref{thmb} in Section \ref{sec:biellipticcover}, while we completely describe the norm map to the canonical cover in Section \ref{sec:abelian}. Here we also construct examples of bielliptic surfaces of every type in which the Brauer map behaves differently. In the Appendix, which is joint work of the second author of the main paper with J. Bergström, a structure theorem for the homomorphism ring of two elliptic curves is given in the case of $j$-invariant 0 or 1728. This will give, in turn, a really useful description of the Picard group of the product of such curves, which is fundamental to study the Brauer map of bielliptic surfaces of type 3 and 5.

\begin{notation}
We are working over the field of complex numbers $\mathbb{C}$. If $X$ is a complex abelian variety over $\mathbb{C}$, and $n\in\Z$, then $X[n]$ will denote the subscheme of $n$-torsion points of $X$, while $n_X:X\rightarrow X$ will stand for the "multiplication by $n$ isogeny". Given $x\in X$ a point, then the translation by $x$ will be denoted as $t_x$. In addition, if $\dim X=1$ -- that is, $X$ is an elliptic curve -- then $P_x$ will be the line bundle $\sO_X(x-p_0)\simeq t_{-x}^*\sO_X(p_0)\otimes\sO_X(-p_0)$ in $\Pic^0(X)$, where $p_0\in X$ is the identity element.\par
For any smooth complex projective variety $Y$ we will denote the identity homomorphism as $1_Y$ (or simply 1 if there is no chance of confusion), while $K_Y$ and $\omega_Y$ will stand for the canonical divisor class and the dualizing sheaf on $Y$, respectively. If $D$ and $E$ are two linearly equivalent divisors on $Y$ we will write $D\sim E$; in addition $\sO_Y(D)$ will denote the line bundle associated to the divisor $D$.
\end{notation}
 
\begin{ack}
All the authors were partially supported by the grant 261756 of
the Research Councils of Norway. We are grateful to H. Nuer for sharing with us an early draft of \cite{Howie} and the construction of a bielliptic cover of bielliptic surfaces of type 2 and 3. We would also like to thank T. Suwa for explaining many facts about the torsion of the $H^2(X,\Z)$ for bielliptic surfaces.
\end{ack}

\section{Background and preliminary results}\label{sec:oes}
\subsection{Bielliptic surfaces}\label{subs:bielliptic}
A complex \emph{bielliptic} (or \emph{hyperelliptic}) surface $S$ is a minimal smooth projective surface over the field of complex numbers with Kodaira dimension $\kappa(S) = 0$, irregularity $q(S) = 1$, and geometric genus $p_g(S) = 0$. By the work of Bagenera-- De Franchis (see for example \cite[10.24-10.27]{Ba2013}), the canonical bundle $\omega_S$ has order either $2,\;3,\;4$ or 6 in $\operatorname{Pic}(S)$, and $S$ occurs as a finite étale quotient of a product $A\times B$ of elliptic curves by a finite group $G$ acting on $A$ by translations and on $B$ such that $B/G \simeq \mathbb{P}^1$. More precisely we have the following classification result.

\begin{thm}[Bagnera--De Franchis {\cite{BdF},  \cite[Theorem at p. 473]{Suwa1970OnHS}, \cite[p. 37]{BM1977}}]\label{thm:bagneradef}
A bielliptic surface is of the form $S =
A\times B/G$, where  $A$ and $B$ are elliptic curves and $G$  a finite group of translations of $A$ acting on $B$ by automorphisms. They are divided into seven types according to $G$ as shown in Table \ref{table:ta}.
\end{thm}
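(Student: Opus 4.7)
The plan is to invoke the Enriques--Kodaira classification together with the structure theory of minimal surfaces of Kodaira dimension zero, and then to analyze the Galois action on the resulting abelian cover. I would split the argument into three steps: (i) producing the abelian cover; (ii) identifying this cover with a product $A\times B$ on which $G$ acts compatibly; and (iii) enumerating the possible groups $G$.

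First I would observe that the invariants $\kappa(S)=0$, $q(S)=1$, $p_g(S)=0$ isolate $S$ among the four classes of Kodaira-dimension-zero minimal surfaces appearing in the Enriques--Kodaira classification: abelian surfaces have $q=2$, K3 surfaces satisfy $q=0$ and $p_g=1$, and Enriques surfaces have $q=p_g=0$. By the theorem of Bombieri--Mumford \cite{BM1977}, any such surface admits a finite \'etale cover by an abelian surface, whose degree coincides with the order $n$ of $\omega_S$ in $\Pic(S)$; a priori $n$ divides $12$, and a short case-by-case check will show $n\in\{2,3,4,6\}$.

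Next, I would use the Albanese morphism $\mathrm{alb}\colon S\to E$, which is surjective onto an elliptic curve, and whose general fibre is a smooth elliptic curve $A$ (by adjunction together with $\kappa(S)=0$). Lifting along the \'etale abelian cover $X\to S$ produces an abelian surface $X$ mapping to a cover $B\to E$ with fibres isomorphic to $A$. Poincar\'e reducibility then identifies $X$ with $A\times B$ up to isogeny; running the same argument with the second elliptic pencil on $S$ (the one with base $\mathbb{P}^1$) and comparing should upgrade this to an actual isomorphism $X\simeq A\times B$, with $G=\Gal(X/S)$ acting by translations on the Albanese factor and with $B/G\simeq\mathbb{P}^1$ on the other.

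For the last step, I would set $\Aut_0(B)=\Aut(B)/B$ and note that $B/G\simeq\mathbb{P}^1$ forces the image of $G$ in $\Aut_0(B)$ to be non-trivial. Since $\Aut_0(B)$ equals $\Z/2\Z$ for general $B$, $\Z/4\Z$ when $j(B)=1728$, and $\Z/6\Z$ when $j(B)=0$, the possible images are the cyclic groups $\Z/2\Z$, $\Z/3\Z$, $\Z/4\Z$, $\Z/6\Z$. For each of these, freeness of the $G$-action on $A\times B$, together with the compatibility between the translational parts on the two factors, restricts the lifts to finitely many extensions by a cyclic torsion subgroup, yielding precisely the seven groups of Table \ref{table:ta}. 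The main obstacle will be step (ii): showing that the abelian cover really equals $A\times B$, rather than only an isogenous abelian surface, and that the $G$-action splits as announced, requires carefully exploiting both elliptic fibrations of $S$ simultaneously.
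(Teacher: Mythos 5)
This statement is the classical Bagnera--De Franchis classification; the paper does not prove it but cites \cite{BdF}, \cite{Suwa1970OnHS} and \cite{BM1977}, so your sketch has to stand on its own. It has a genuine gap in step (ii): you take the \'etale abelian cover to be the \emph{canonical} cover $X\to S$ of degree $n=\mathrm{ord}(\omega_S)$ and set $G=\operatorname{Gal}(X/S)$, which forces $G$ to be cyclic of order $n$. This can only ever produce the cyclic types $1,3,5,7$. For types $2$, $4$ and $6$ the group $G$ of the theorem is non-cyclic, of order $2n$ or $3n$, and the canonical cover is only the intermediate quotient $X\simeq (A\times B)/H$ with $H$ a nontrivial group of translations (see Paragraph 2.2 of the paper): the images of $A\times\{0\}$ and $\{0\}\times B$ in $X$ meet in $|H|>1$ points, so $X$ does not split as a product compatibly with the two fibrations, and no amount of ``running the same argument with the second pencil'' will upgrade the isogeny from Poincar\'e reducibility to an isomorphism. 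Indeed, if a type~2 surface could be written as $(A'\times B')/(\Z/2\Z)$ in Bagnera--De Franchis form it would be of type~1, contradicting the fact that $H^2(S,\Z)_{\mathrm{tor}}$ distinguishes the two types. The same confusion shows up in your degree count: the minimal cover by a genuine product has degree $|G|=n\lambda_S$, not $n$.

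The standard repair is to produce the product cover directly rather than through $\omega_S$: restrict the Albanese map $a_S\colon S\to E=\mathrm{Alb}(S)$ to a general fibre $A$ of the second pencil $g\colon S\to\mathbb{P}^1$; this is an isogeny $A\to E$ with kernel $G$ (a finite abelian group, hence a product of at most two cyclic groups, not necessarily cyclic), and the base change $S\times_E A$ is an \'etale $G$-cover of $S$ isomorphic to $A\times B$, with $G$ acting by translations on $A$ and by automorphisms on $B$ with $B/G\simeq\mathbb{P}^1$. Your step (iii) -- bounding the image of $G$ in $\Aut(B)/B$ by $\Z/2\Z$, $\Z/3\Z$, $\Z/4\Z$, $\Z/6\Z$ and then enumerating the admissible extensions acting freely -- is essentially the right endgame once the cover is set up correctly, but as written it sits on top of a construction that omits three of the seven types.
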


\noindent There are natural maps $a_S \colon S \to A/G$ and $g\colon S \to B/G \simeq \mathbb{P}^1$ which are both elliptic fibrations. The morphism $a_s$ is smooth, and coincides with the Albanese morphism of $S$. On the other hand, $g$ admits multiple fibers, corresponding to the branch points of the quotient $B\rightarrow B/G$, with multiplicity equal to that of the associated branch point.  The smooth fibers of $a_S$ and $g$ are isomorphic to $B$ and $A$, respectively.  We will denote by $a$ and $b$ the classes of these fibers in $\Num(S)$, $H^2(S, \Z)$ and $H^2(S, \Q)$. \\

It is well known (see for example \cite[p. 529]{Se1990}) that $a$ and $b$ span $H^2(S, \Q)$ and satisfy $a^2 = b^2 = 0$, $ab = |{G}|$. Furthermore, we have the following description of the second cohomology of $S$: 

\begin{prop}\label{prop:torsionSerrano}
The decomposition of $H^2(S, \Z)$ is described according to the type of $S$ and the multiplicities $(m_1, \ldots, m_s)$ of the singular fibers of $g \colon S \to \mathbb{P}^1$ as follows: 

\begin{center}
\renewcommand{\arraystretch}{1.4}
\begin{tabular}{l*{4}{c}}
\hline

Type & $(m_1, \ldots, m_s)$ & $H^2(S, \Z)$&$H^2(S, \Z)_{\text{\emph{tor}}}$ \\

\hline
$1$ & $(2,2,2,2)$ & $\Z[\frac{1}{2}a] \oplus \Z[b] \oplus \Z/2\Z \oplus \Z/2\Z$ &$\Z/2\Z\times\Z/2\Z$\\

$2$ & $(2,2,2,2)$ & $\Z[\frac{1}{2}a] \oplus \Z[\frac{1}{2}b] \oplus \Z/2\Z$ &$\Z/2\Z$\\

$3$ & $(2,4,4)$ & $\Z[\frac{1}{4}a]\oplus \Z[b] \oplus \Z/2\Z$ &$\Z/2\Z$\\

$4$ & $(2,4,4)$ & $\Z[\frac{1}{4}a] \oplus \Z[\frac{1}{2}b]$ &$0$\\

$5$ & $(3,3,3)$ & $\Z[\frac{1}{3}a]\oplus \Z[b] \oplus \Z/3\Z$ &$\Z/3\Z$\\

$6$ & $(3,3,3)$ & $\Z[\frac{1}{3}a]\oplus \Z[\frac{1}{3}b]$&$0$ \\

$7$ & $(2,3,6)$ & $\Z[\frac{1}{6}a]\oplus \Z[b]$&$0$\\
\hline
\end{tabular}
\end{center} 
\end{prop}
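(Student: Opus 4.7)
The plan is to identify $H^2(S,\Z)$ with $\NS(S)$ via the exponential sequence, and then describe $\NS(S)$ in two pieces: the free rank-$2$ part, controlled by the elliptic fibration structure of $S$, and the torsion part, computed from $\pi_1(S)$ using the étale cover $A\times B \to S$.

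Since $p_g(S) = h^{0,2}(S) = 0$, the exponential sequence gives $\NS(S) = \Pic(S)/\Pic^0(S) \cong H^2(S,\Z)$, with $\rk\NS(S) = h^{1,1}(S) = b_2(S) = 2$. The intersection pairing on $\NS(S)/\text{tor}$ is unimodular by Poincar\'e duality; combined with $a^2 = b^2 = 0$ and $a \cdot b = |G|$, this forces the sublattice $\Z a + \Z b$ to have index exactly $|G|$ in the free part of $\NS(S)$. To exhibit the free part, I use the multiple fibers of $g\colon S \to \IP^1$: if $F_i$ is a reduced multiple fiber of multiplicity $m_i$, then $m_i F_i \sim F$ (the general fiber), so $[F_i] = \frac{1}{m_i}a$; hence $\frac{1}{\lcm(m_i)}\,a \in \NS(S)$, which gives $\alpha = \lcm(m_i)$ as tabulated. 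The index constraint $\alpha\beta = |G|$ then forces a class $\frac{1}{\beta}b \in \NS(S)$; to promote this to the orthogonal splitting in the table, and rule out mixed generators of the form $c\frac{1}{\alpha}a + d\frac{1}{\beta}b$, I would saturate the sublattice of $a$-multiples using the fibration $g$, and, dually, analyse the sublattice of $b$-multiples via multisections of $g$ of minimal degree, equivalently via the subgroup of $G$ acting on $B$ by pure translations.

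For the torsion, the universal coefficient theorem gives $H^2(S,\Z)_{\mathrm{tor}} \cong H_1(S,\Z)_{\mathrm{tor}} \cong \pi_1(S)^{\mathrm{ab}}_{\mathrm{tor}}$. The étale Galois cover $A\times B \to S$ yields
\[
1 \to \pi_1(A\times B) \to \pi_1(S) \to G \to 1,
\]
with $\pi_1(A\times B) = \Lambda_A \oplus \Lambda_B = \Z^4$ and induced $G$-action read off from the Bagnera--De Franchis normal form in each type. Abelianising $\pi_1(S)$ and extracting torsion type by type then recovers the tabulated groups $(\Z/2\Z)^2,\; \Z/2\Z,\; \Z/2\Z,\; 0,\; \Z/3\Z,\; 0,\; 0$ for types $1$ through $7$.

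The main obstacle is the orthogonality assertion for the free part: the multiplicity data of $g$ pin down $\alpha$ cleanly, but verifying that $H^2(S,\Z)_{\mathrm{free}}$ splits as $\Z\cdot\frac{1}{\alpha}a \oplus \Z\cdot\frac{1}{\beta}b$ — rather than containing a primitive mixed class whose existence would still be compatible with the index computation — requires a case-by-case argument using the explicit Bagnera--De Franchis data, and this is where I expect the bulk of the work to lie.
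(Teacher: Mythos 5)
The paper itself does not prove this proposition --- the proof is a citation to Serrano's tables for the lattice and to Suwa/Iitaka/Umemura for the torsion --- and your plan is essentially a reconstruction of how those references proceed. As a strategy it is sound, but both computations that constitute the actual content of the statement are left undone, and in one place you have misdiagnosed where the difficulty sits. For the free part, the ``main obstacle'' you name (ruling out a primitive mixed class) is not an obstacle: since $K_S$ is numerically trivial, Riemann--Roch gives $D^2\equiv D\cdot K_S\equiv 0\pmod 2$ for every divisor, so $\Num(S)=H^2(S,\Z)/\mathrm{tor}$ is an even unimodular lattice of rank $2$ and signature $(1,1)$, hence isometric to the hyperbolic plane $U$; in $U$ the isotropic vectors form exactly two lines, so the isotropic classes $a$ and $b$ automatically satisfy $a=\alpha' e$, $b=\beta' f$ for a standard basis $e,f$ with $\alpha'\beta'=a\cdot b=|G|$, and the orthogonal splitting comes for free. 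What is \emph{not} free is the value of $\alpha'$. Your multiple-fibre argument gives only the divisibility $\lcm(m_i)\mid\alpha'$, which pins down $\alpha'$ for the cyclic types $1,3,5,7$ (where $\lcm(m_i)=|G|$), but for types $2$, $4$ and $6$ it leaves open the alternative $\alpha'=|G|$, $\beta'=1$, i.e.\ it does not exclude that $\tfrac{1}{|G|}a$ is integral. Excluding that --- equivalently, exhibiting the class $\tfrac{1}{\beta}b$ --- is exactly the step your ``multisections / translation subgroup'' remark gestures at without carrying out, and it is the genuine content of Serrano's computation in the non-cyclic cases.

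For the torsion, the route through $H^2(S,\Z)_{\mathrm{tor}}\cong H_1(S,\Z)_{\mathrm{tor}}$ and the extension $1\to\Lambda_A\oplus\Lambda_B\to\pi_1(S)\to G\to 1$ is correct and is what Suwa does, but ``abelianising and extracting torsion type by type'' \emph{is} the proof, not a remark to be deferred: for each of the seven Bagnera--De Franchis normal forms one must compute the commutator subgroup (generated by the elements $(1-g)\lambda$ for $\lambda\in\Lambda_B$ together with commutators of lifts of generators of $G$) and then resolve the resulting extension of $G$ by $\Lambda/\langle(1-g)\Lambda\rangle$; none of the seven cases is written down, and e.g.\ distinguishing $(\Z/2\Z)^2$ in type $1$ from $\Z/2\Z$ in type $2$ lives entirely inside that omitted computation. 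So the proposal identifies the correct two-track argument but, as it stands, contains neither of the two verifications on which the table rests.
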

\qquad
\begin{proof}
See \cite[Tables 2 and 3]{Se1990}. The computation of the torsion of $H^2(S,\Z)$ can be also found in \cite{iitaka1970,Ser1991,Suwa1970OnHS,umemura1975}.
\end{proof}
\qquad

\noindent Since $H^2(S, \sO_S) = 0$, the first Chern class map $c_1\colon \Pic(S) \to H^2(S, \Z)$ is surjective, so the Néron-Severi group $NS(S) \simeq H^2(S, \Z)$. Modulo torsion we then get \[\operatorname{Num}(S) = \Z[a_0] \oplus \Z[b_0]\] where $a_0 = \frac{1}{\ord{(\omega_S})}a$ and $b_0 = \frac{\ord{(\omega_S)}}{|G|}b.$ 

\subsection{Canonical covers}\label{sub:cancover}
Let $S$ be a bielliptic surface and denote by $n$ the order of its canonical bundle. Then, by a classical construction (see for example \cite[Section 2]{BM1998}), $\omega_S$ induces an \'etale cyclic cover $\pi_S\colon X \to S$, called the \emph{canonical cover of $S$}. From now on, when there is no confusion, we will omit the subscript $S$ and write simply $\pi:X\rightarrow S$.\par
If we let $\lambda_S \coloneqq |G|/\ord{(\omega_S)}$, we have that $G\simeq \Z/n\Z \oplus \Z/\lambda_S\Z$, and $X$ is the abelian surface sitting as an intermediate quotient
$$
\xymatrix{A\times B \ar[dr]\ar[rr]&&S\simeq A\times B/G\\
&X\simeq A\times B/H\ar[ur]_\pi&}
$$
where $H \simeq \Z/\lambda_S\Z$. The abelian surface $X$ thus comes with homomorphisms of abelian varieties $p_A:X \to A/H$ and $p_B:X\to B/H$ with kernels isomorphic to $B$ and $A$, respectively. Denoting by $a_X$ and $b_X$ the classes of the fibers $A$ and $B$ in $\operatorname{Num}(X)$, we have $a_X \cdot b_X = \lambda_S$ and the embedding $\pi^* \colon \operatorname{Num}(S) \hookrightarrow{} \operatorname{Num}(X)$ satisfies 
\begin{equation}\label{eq:num abelian}\pi^*a_0 = a_X, \pi^*b_0 = \frac{n}{\lambda_S}b_X.
\end{equation}
There is a fixed-point-free action of the group $\Z/n\Z$ on the abelian variety $X$ such that the quotient is exactly $S$. We will denote by $\sigma\in \operatorname{Aut}(X)$ a generator of $\Z/n\Z$. In what follows it will be useful to have an explicit description of $\sigma$ when $S$ is of type 1, 2, 3, or 5.\par
Suppose first that $S$ is of type 1, 3, or 5, so $G$ is cyclic, $H$ is trivial, and $X\simeq A\times B$. If $S$ is of type 3, then the $j$-invariant of $B$ is 1728, and $B$ admits an automorphism $\omega:B\rightarrow B$ of order 4. If $S$ is of type 5, $B$ has $j$-invariant 0 and admits an automorphism $\rho$ of order 3 (see for example\cite[p. 37]{BM1977}, \cite[List 10.27]{Ba2013} or \cite[p. 199]{BHPVdV}). With this notation we have that the automorphism $\sigma$ of $A\times B$ inducing the covering $\pi$ is given by
\begin{equation}\label{eq:absigma}
   \sigma(x,y)=
   \begin{cases}
   (x+\tau,-y),&\quad\text{if $S$ is of type 1,}\\
   (x+\epsilon,\omega(y)),&\quad\text{if $S$ is of type 3,}\\
   (x+\eta,\rho(y)),&\quad\text{if $S$ is of type 5,}\\
   \end{cases}
  \end{equation}
where $\tau$, $\epsilon$, and $\eta$ are points of $A$ of order 2, 4, and 3 respectively. We remark that different choices for the automorphism $\rho$ and $\omega$ - there are two possible choices in each case- will lead to isomorphic bielliptic surfaces.\par
If $S$ is otherwise of type 2, then there are points $\theta_1\in A$ and $\theta_2\in B$, both of order two, such that $X$ is the quotient of $A\times B$ by the involution $(x,y)\mapsto(x+\theta_1,y+\theta_2)$. If we denote by $[x,y]$ the image of $(x,y)$ through the quotient map, we have that 
\begin{equation}\label{eq:absigmatype2}
   \sigma[x,y]=[x+\tau,-y],
  \end{equation}
where $\tau\in A$ is a point of order 2, $\tau\neq \theta_1$.
\subsection{Covers of bielliptic surfaces by other bielliptic surfaces}\label{sub:biellipticover} When $G$ is not a cyclic group, or when $G$ is cyclic, but the order of $G$ is not a prime number, then the bielliptic surface $S$ admits a cyclic cover $\tilde{\pi}:\tilde{S}\rightarrow S$, where $\tilde{S}$ is another bielliptic surface. This construction, together with the statement of Lemma \ref{lem:type3}, appears explicitly in the unpublished work of Nuer \cite{Howie},and is implicit in the work of Suwa \cite[p. 475]{Suwa1970OnHS}. 
The main point that we will need in Section \ref{sec:biellipticcover} is the description of the pull-back map $\operatorname{Num}(S)\rightarrow\operatorname{Num}(\tilde{S})$.\par

\begin{lemma}\label{lem:type3}\label{lem:type2}
\begin{enumerate}[leftmargin=0cm,itemindent=1cm,labelwidth=.5cm,labelsep=.1cm,align=left,label=(\roman*)]
\item Let $S$ be a bielliptic surface such that $\operatorname{ord}(\omega_S)$ is not a prime number and take  $d$  a proper divisor of $n$. Then there is a bielliptic surface $\tilde{S}$ sitting as an intermediate étale cover between $S$ and $X$, 
$$\xymatrix{X\ar@/_1pc/[rrrr]_{\pi_S}\ar[rr]^{\pi_{\tilde{S}}}&&\tilde{S}\ar[rr]^{\tilde{\pi}} &&S}$$
such that $\ord{(\omega_{\tilde{S}})} = \frac{\ord{(\omega_S})}{d}$ and \[\tilde{\pi}^*a_0 = \tilde{a_0}, \tilde{\pi}^*b_0 = d\tilde{b_0},\] where $\tilde{a_0}, \tilde{b_0}$ are the natural generators of $\operatorname{Num}{(\tilde{S})}$. 
\item Let $S$ be a bielliptic surface with $\lambda_S > 1$, i.e., with $G$ not cyclic. Then there is a bielliptic surface $\tilde{S}$ sitting as an intermediate étale cover between $S$ and $A\times B$ $$\xymatrix{A\times B\ar@/_1pc/[rrrr]_{\pi_S}\ar[rr]^{\pi_{\tilde{S}}}&&\tilde{S}\ar[rr]^{\tilde{\pi}} &&S}$$
such that $\lambda_{\tilde{S}} = 1$, $\operatorname{ord}(\omega_{\tilde{S}}) = \operatorname{ord}(\omega_{S})$ and 
\[\tilde{\pi}^*a_0 = \lambda_{S}\tilde{a_0}, \tilde{\pi}^*b_0 = \tilde{b_0},\] 
where $\tilde{a_0}, \tilde{b_0}$ are the natural generators of $\operatorname{Num}{(\tilde{S})}$. 
 \end{enumerate}
\end{lemma}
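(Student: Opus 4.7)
My plan is to exhibit $\tilde{S}$ explicitly as an intermediate quotient $(A\times B)/K$ for a carefully chosen subgroup $K$ of $G$, and then to extract the numerical pullback from the combinatorics of the two elliptic fibrations.

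Use the decomposition $G \simeq \Z/n\Z \times \Z/\lambda_S\Z$ (with $n=\operatorname{ord}(\omega_S)$), where $H\simeq\{0\}\times\Z/\lambda_S\Z$ is the translation subgroup yielding $X=(A\times B)/H$, and let $e\in G$ denote a generator of the first factor, which acts on $A$ by translation by a point of order $n$ and on $B$ by a non-translation automorphism of order $n$ (explicitly described after \eqref{eq:absigma} as a root of unity). For part (i) I set $K := \langle e^d, H\rangle\subseteq G$, so that $H\subseteq K\subseteq G$, and define $\tilde{S}:=(A\times B)/K$; for part (ii) I take $K := \langle e\rangle \simeq \Z/n\Z$, a complement of $H$ in $G$, and again set $\tilde{S}:=(A\times B)/K$. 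In both cases $K$ acts freely on $A\times B$ because $K\subseteq G$ does, and the subgroup of $K$ acting on $B$ by translations is precisely $K\cap H$, yielding $\lambda_{\tilde{S}}=|K\cap H|$ and $\operatorname{ord}(\omega_{\tilde{S}})=|K|/\lambda_{\tilde{S}}$. This recovers $n/d$ and $\lambda_S$ in (i), and $n$ and $1$ in (ii). The quotient $\tilde{S}$ is genuinely bielliptic (not abelian) because $K/(K\cap H)$ still surjects non-trivially onto $G/H$ and hence acts on $B/(K\cap H)$ by a non-translation automorphism, forcing $B/K\simeq\mathbb{P}^1$.

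For the numerical pullback I consider the commutative diagrams of elliptic fibrations
\[
\xymatrix{\tilde{S}\ar[r]^{\tilde{a}_S}\ar[d]_{\tilde{\pi}} & A/K \ar[d]^{\alpha} \\ S\ar[r]^{a_S} & A/G}\qquad
\xymatrix{\tilde{S}\ar[r]^{\tilde{g}}\ar[d]_{\tilde{\pi}} & B/K\ar[d]^{\beta} \\ S\ar[r]^{g} & B/G,}
\]
in which $\alpha$ is an étale isogeny of elliptic curves of degree $[G:K]$ and $\beta$ is a degree $[G:K]$ cover of $\mathbb{P}^1$s. Since $\tilde{\pi}$ is étale, a generic smooth fiber of $a_S$ (respectively $g$) pulls back to a disjoint union of $[G:K]$ smooth fibers of $\tilde{a}_S$ (respectively $\tilde{g}$), yielding $\tilde{\pi}^*a=[G:K]\,\tilde{a}$ and $\tilde{\pi}^*b=[G:K]\,\tilde{b}$. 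Dividing by the normalizations $a_0=a/n$, $b_0=b/\lambda_S$, $\tilde{a}_0=\tilde{a}/\operatorname{ord}(\omega_{\tilde{S}})$, $\tilde{b}_0=\tilde{b}/\lambda_{\tilde{S}}$, and inserting $[G:K]=d$ for (i) and $[G:K]=\lambda_S$ for (ii), the announced identities $\tilde{\pi}^*a_0=\tilde{a}_0$, $\tilde{\pi}^*b_0=d\tilde{b}_0$ and $\tilde{\pi}^*a_0=\lambda_S\tilde{a}_0$, $\tilde{\pi}^*b_0=\tilde{b}_0$ drop out immediately.

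The main obstacle I foresee is verifying that $\tilde{S}$ is bielliptic and not abelian, which reduces to showing that $K$ retains a non-translation element acting on $B$. In part (i) this is precisely where the hypothesis that $d$ is a \emph{proper} divisor of $n$ (so $n/d>1$ and $e^d$ has positive order modulo $H$) becomes essential; in part (ii) it follows automatically from the choice of $K$ as a complement of $H$, since then $e\in K$ surjects onto a generator of $G/H$. A minor technical point in the numerical calculation is that one must restrict to generic (smooth, non-multiple) fibers of $g$, but this is automatic since the multiple fibers form only a finite set.
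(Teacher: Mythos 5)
Your construction is correct, and it is worth noting that the paper itself does not prove this lemma: it defers to the unpublished work of Nuer \cite{Howie} and to Suwa, and only records the explicit quotients for types $2$ and $3$ in Example \ref{ex:type3->type1}. Your choice of intermediate subgroup --- $K=\langle e^d,H\rangle$ in case (i) and $K=\langle e\rangle$ a complement of $H$ in case (ii) --- specializes exactly to those examples, and your fiber-counting argument reproduces the paper's formula \eqref{eq:num abelian} for the canonical cover when $K=H$, which is a good consistency check. The numerics work out: with $\tilde{\pi}^*a=[G:K]\,\tilde a$, $\tilde{\pi}^*b=[G:K]\,\tilde b$, $a_0=a/n$, $b_0=b/\lambda_S$, $\tilde a_0=\tilde a/\ord(\omega_{\tilde S})$ and $\tilde b_0=\tilde b/\lambda_{\tilde S}$, one gets precisely the stated pullbacks in both cases. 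Two small points deserve a line each if this were written out in full. First, the identification $\ord(\omega_{\tilde S})=|K|/|K\cap H|$ is best justified by noting that $K$ acts on $H^0(A\times B,\omega_{A\times B})$ through the character by which it acts on $H^0(B,\omega_B)$, whose order is that of $K/(K\cap H)$; this also gives $p_g(\tilde S)=0$, $q(\tilde S)=1$, so that $\tilde S$ is genuinely bielliptic once one knows $B/K\simeq\mathbb{P}^1$. Second, your degree counts for $\alpha:A/K\to A/G$ and $\beta:B/K\to B/G$ tacitly use that $G$ acts faithfully on each factor; this holds in all Bagnera--De Franchis normal forms (on $A$ because $G$ is a group of translations of $A$, hence a subgroup of $A$, and on $B$ by inspection of the seven types), but it is the one hypothesis you should flag. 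Finally, a wording quibble: in case (i) the image of $K$ in $G/H$ is a nontrivial \emph{proper} subgroup rather than all of $G/H$; what you actually need, and what your hypothesis $d<n$ gives, is only that this image is nontrivial.
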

In what follows we will need a more explicit construction of $\tilde{S}$, when $S$ is either of type 2 or 3.
\begin{ex}\label{ex:type3->type1}\label{ex:type2->type1}
\begin{enumerate}[leftmargin=0cm,itemindent=1cm,labelwidth=.5cm,labelsep=.1cm,align=left,label=(\alph*)]
\item Suppose that $S$ is a bielliptic surface of type 3. Then the canonical bundle has order 4. In addition the canonical cover $X$ of $S$ is a product of elliptic curves, that is $X\simeq A\times B$. Using the notation of \eqref{eq:absigma}, we obtain $\tilde{S}$ from $A\times B$ by taking the quotient with respect to the involution $(x,y)\mapsto (x+2\epsilon,-b)$. Thus we have that $\tilde{S}$ is a bielliptic surface of type 1. The map $\tilde{\pi}:\tilde{S}\rightarrow S$ is an \'etale double cover with associated involution $\tilde{\sigma}$. Hence, given $s\in\tilde{S}$, we can see it as an equivalence class $[x,y]$ of a point $(x,y)\in A\times B$. Then we have an explicit expression for $\tilde{\sigma}$:
\begin{equation}\label{eq:explicitsigma3}
    \tilde{\sigma}(s)=[x+\epsilon,\omega(y)].
    \end{equation}
    \item Suppose that $S$ is a bielliptic surface of type 2, so the group $G$ is isomorphic to the product $\Z/2\Z\times\Z/2\Z$. Then we obtain $\tilde{S} $ from $A\times B$ by taking the quotient with respect to $(x,y)\mapsto (x+\tau,-y)$, where we are using the notation of \eqref{eq:absigmatype2}. Thus, as in \ref{ex:type3->type1}, $\tilde{S}$ is a bielliptic surface of type 1 and each $s\in\tilde{S}$ can be written as an equivalence class $[x,y]$ of a point $(x,y)\in A\times B$. If we denote again by $\tilde{\sigma}$ the involution induced by the cover $\tilde{\pi}:\tilde{S}\rightarrow S$, we have the following:
\begin{equation}\label{eq:explicitsigma2}
    \tilde{\sigma}(s)=    [x+\theta_1,y+\theta_2].
    \end{equation}
    \end{enumerate}
\end{ex}

\subsection{Norm homomorphisms} Let $\pi:X\rightarrow Y$ be a finite locally free morphism of projective varieties of degree $n$. To it we can associate a group homomorphism $\operatorname{Nm}_\pi:\Pic(X)\rightarrow\Pic(Y)$ called the \emph{norm homomorphism associated to $\pi$}. This is constructed in the following manner. First, one lets $\mathscr{B}:=\pi_*\sO_X$, and defines a  morphism of sheaves of multiplicative monoids $N:\mathscr{B}\rightarrow \sO_Y$: given $s$ a section of $\mathscr{B}$ on an open set $U$, let $m_s$ be the endomorphism of $\mathscr{B}(U)$ induced by the multiplication by $s$; we set $N(s):=\det(m_s)\in\sO_Y(U)$ (see \cite[\S\:6.4, and \S 6.5]{EGAII} or \cite[Lemma 0BD2]{stacks-project}
). The restriction of $N$ to invertible sections induces a morphism of sheaves of groups $N:\mathscr{B}^*\rightarrow\sO_Y^*$. Now, given $L$ an invertible sheaf on $X$, $\pi_*L$ is an invertible $\mathscr{B}$-module and, as such is represented by a cocycle $\{u_{ij},U_i\}$ for an open cover $\{U_i\}$ of $Y$. Observe that $u_{ij}\in\mathscr{B}^*(U_{ij})$. The fact that $N$ is multiplicative ensures that also the $v_{ij}:=N(u_{ij})$ satisfies the cocycle condition and so uniquely identifies a line bundle $\operatorname{Nm}_\pi(L)$ on $Y$. The map $L\mapsto \operatorname{Nm}_\pi(L)$ is a group homomorphism by \cite[(6.5.2.1)]{EGAII}. In addition \cite[(6.5.2.4)]{EGAII} ensures that
\begin{equation}\label{eq:normofpullback}
    \operatorname{Nm}_\pi(\pi^*M)\simeq M^{\otimes n},
\end{equation}
and we also have the following important property:
\begin{prop}
\label{prop:functorial}
Given two finite locally free morphism $\pi_1:X\rightarrow Y$ and $\pi_2:Y\rightarrow Z$, then
$$
\operatorname{Nm}_{\pi_2\circ \pi_1}=\operatorname{Nm}_{\pi_2}\circ \operatorname{Nm}_{\pi_1}
$$
\end{prop}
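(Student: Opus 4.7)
The plan is to reduce the proposition to the classical transitivity of the norm in a tower of finite locally free ring extensions, and then to globalize via the cocycle description of $\operatorname{Nm}_\pi$ given just before the statement. Working locally on $Z$, I would pick an affine open $U = \operatorname{Spec}(C) \subset Z$ over which $\pi_{2*}\mathcal{O}_Y$ and $(\pi_2 \circ \pi_1)_*\mathcal{O}_X$ are simultaneously free, set $B := (\pi_{2*}\mathcal{O}_Y)(U)$, and shrink on $\operatorname{Spec}(B) = \pi_2^{-1}(U)$ so that $\pi_{1*}\mathcal{O}_X$ is also free there. Writing $D := (\pi_{1*}\mathcal{O}_X)(\operatorname{Spec} B)$ and using $(\pi_2\circ\pi_1)_* = \pi_{2*}\pi_{1*}$, we also have $D = ((\pi_2\circ\pi_1)_*\mathcal{O}_X)(U)$, giving a tower of finite free extensions $C \subset B \subset D$ with $[D:B]=\deg\pi_1$ and $[B:C]=\deg\pi_2$.

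The key algebraic identity to establish is: for every $d \in D$,
\[
\operatorname{Nm}_{D/C}(d) \;=\; \operatorname{Nm}_{B/C}\bigl(\operatorname{Nm}_{D/B}(d)\bigr),
\]
where $\operatorname{Nm}_{D/C}(d) := \det(m_d)$ denotes the determinant of multiplication by $d$ on $D$ regarded as a $C$-linear endomorphism, and similarly for the other two norms. This is a classical block determinant computation: choose a $B$-basis $e_1,\dots,e_m$ of $D$ and a $C$-basis $f_1,\dots,f_n$ of $B$, so that $\{f_j e_i\}$ is a $C$-basis of $D$; the $C$-matrix of multiplication by $d$ in this basis then has an $m\times m$ block form whose $(i,i')$-block is the $C$-matrix of multiplication by the $(i,i')$-entry of the $B$-matrix of $d$-multiplication, and a standard identity evaluates its determinant as $\operatorname{Nm}_{B/C}(\operatorname{Nm}_{D/B}(d))$. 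A convenient reference for the sheafified version of this calculation is \cite[(6.5.8)]{EGAII}.

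To globalize, I would cover $Z$ by opens $\{U_i\}$ over which all the above trivializations hold. A line bundle $L$ on $X$, viewed as an invertible $(\pi_2\circ\pi_1)_*\mathcal{O}_X$-module on $Z$, is represented by a cocycle $\{u_{ij}\}$ with $u_{ij}\in((\pi_2\circ\pi_1)_*\mathcal{O}_X)^*(U_{ij})$; under the identification $(\pi_2\circ\pi_1)_*\mathcal{O}_X=\pi_{2*}\pi_{1*}\mathcal{O}_X$ this same cocycle represents $\pi_{1*}L$ as an invertible $\pi_{1*}\mathcal{O}_X$-module on $Y$. Unwinding the definition of the norm, $\operatorname{Nm}_{\pi_2\circ\pi_1}(L)$ has cocycle $\{\operatorname{Nm}_{D/C}(u_{ij})\}$, while $\operatorname{Nm}_{\pi_2}(\operatorname{Nm}_{\pi_1}(L))$ has cocycle $\{\operatorname{Nm}_{B/C}(\operatorname{Nm}_{D/B}(u_{ij}))\}$; the pointwise identity above shows these cocycles coincide, so the two line bundles are canonically isomorphic.

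The main obstacle is really the bookkeeping around the identification $(\pi_2\circ\pi_1)_*\mathcal{O}_X = \pi_{2*}\pi_{1*}\mathcal{O}_X$: one has to verify that this standard identification is compatible with the relevant module and unit structures and with the passage from cocycles on $Z$ (for the direct pushforward) to cocycles on $Y$ (for the intermediate one). Once this is settled, the proposition is the pure sheafification of the classical tower formula for norms of finite free ring extensions.
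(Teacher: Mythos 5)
Your proof is correct, and it is in substance the argument the paper delegates to its reference: the paper's proof of this proposition is simply the citation \cite[Lemma 21.5.7.2]{EGAIViv}, whose content is exactly the reduction you describe --- the transitivity $\operatorname{Nm}_{D/C}=\operatorname{Nm}_{B/C}\circ\operatorname{Nm}_{D/B}$ for a tower of finite locally free ring maps (via the commuting-block determinant identity), globalized through the cocycle description of the norm and the identification $(\pi_2\circ\pi_1)_*=\pi_{2*}\circ\pi_{1*}$. The only point worth tightening is that the common trivializing cover must consist of opens of $Z$ (so one frees $\pi_{1*}\sO_X$ over $\pi_2^{-1}(U)$ by shrinking $U$ in $Z$, using that constant-rank projectives over the semilocal rings $B\otimes_C C_{\gp}$ are free), rather than by shrinking inside $\operatorname{Spec}(B)$ itself.
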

\begin{proof}
See \cite[Lemma 21.5.7.2]{EGAIViv}.
\end{proof}
Suppose now that $\pi:X\rightarrow Y$ is an \'etale cyclic cover of degree $n$. Then there is a fixed-point-free automorphism $\sigma:X\rightarrow X$ of order $n$ such that $Y\simeq X/\sigma$. In addition we can write $\mathscr{B}\simeq\bigoplus_{h=0}^{n-1}M^{\otimes h}$ with $M$ a line bundle of order $n$ in $\Pic(Y)$. In this particular setting the norm homomorphism satisfies some additional useful properties. First, as $\operatorname{Nm}_\pi$ behaves well with base change (\cite[Proposition 6.5.8]{EGAII}), it is not difficult to see that
\begin{equation}\label{eq:1-sigma}
\operatorname{Nm}_\pi\circ(1_X-\sigma^*)=0.
\end{equation}

In additon, as discussed by Beauville in \cite{Be2009}, we have that 
\begin{equation}\label{eq:pullbackofnorm}
\pi^*\operatorname{Nm}_\pi(L)\simeq \bigotimes_{h=0}^n(\sigma^h)^*L
\end{equation}
In fact, by the definiton of pushforward of divisors (\cite[Definition 21.5.5]{EGAIViv}), if $L\simeq\sO_X(\sum a_i\cdot D_i)$ with prime divisors on $X$, then $\operatorname{Nm}_\pi(L)\simeq\sO_Y(\sum a_i\cdot \pi_*D_i)$. Therefore \eqref{eq:pullbackofnorm} follows form the fact that for a prime divisor $D$ we have that $\pi^*\pi_*D\sim\sum_{h=0}^{n-1}(\sigma^h)^*D$.
  \begin{rmk}[$\Pic^0$ trick]\label{rmk:trick}
  In what follows it will be important to provide elements in the kernel of the Norm homomorphism. We will often use the following trick. Let $\pi:X\rightarrow Y$ be an \'etale morphism of degree $n$ and suppose that there is a line bundle $L$ on $X$ such that $\operatorname{Nm}_\pi(L)\in\Pic^0(Y).$ Then there is an element  $\alpha\in\Pic^0(X)$ such that $\operatorname{Nm}_\pi(L\otimes\alpha)$ is trivial. In fact, as abelian varieties are divisible groups, it is possible to find $\beta\in\Pic^0(Y)$ such that $\beta^{\otimes n}\simeq \operatorname{Nm}_\pi(L)^{-1}$. Then, by \eqref{eq:normofpullback} we get
  $$
  \operatorname{Nm}_\pi(L\otimes\pi^*\beta)\simeq \operatorname{Nm}_\pi(L)\otimes \beta^{\otimes n}\simeq \sO_Y.
  $$
   \end{rmk}We conclude this paragraph by saying that, from now on, if there is no possibility of confusion, we will omit the subscript when denoting the norm. That is we will write $\operatorname{Nm}$ instead of $\operatorname{Nm}_\pi$
\subsection{Brauer groups and Brauer maps}
For a scheme $X$, the \emph{cohomological Brauer group} $\operatorname{Br}'(X)$ is defined as the torsion part of the étale cohomology group $H^2_{\mathrm{et}}(X, \sO_X^*)$. For complex varieties, this is isomorphic to the torsion of $H^2(X, \sO_X^*)$ in the analytic topology. In addition, when $X$ is quasi-compact and separated, by a theorem of Gabber (see,  for example, \cite{dejongGabber} for  more details) the cohomological Brauer group of $X$ is canonically isomorphic to the \emph{Brauer group} $\operatorname{Br}(X)$ of Morita-equivalence classes of Azumaya algebras on $X$. For what it concerns the present document, we will only be concerned with smooth complex projective varieties, therefore all these three groups will be isomorphic and will be denoted simply by $\operatorname{Br}(X)$. Furthermore we will only speak of the \emph{Brauer group of $X$}, without any additional connotation.\par
If $S$ is a bielliptic surface, the exponential sequence yields that $H^3(S, \Z) \simeq H^2(S, \sO_S^*)$, so that the Brauer group of $S$ is isomorphic to the torsion of $H^3(S, \Z)$. By Poincar\'e duality and the universal coefficients theorem, the torsion of $H^3(S, \Z)$ is (non canonically) isomorphic to the torsion of $H^2(S, \Z)$, so the isomorphism type of the Brauer group of $S$ can be deduced in terms of Proposition 2.2. \\
Crucial to our purposes will be the following result of Beauville which describes the kernel of the Brauer map $\pi_{\operatorname{Br}}$ when $\pi$ is a cyclic \'etale cover.
\begin{prop}[{\cite[Prop. 4.1]{Be2009}}]\label{prop:Beauville}
Let $\pi\colon X\to S$ be an étale cyclic covering of smooth projective varieties. Let $\sigma$ be a generator of the Galois group of $\pi$, $\operatorname{Nm}\colon \Pic(X) \to \Pic(S)$ be the norm map and $\pi_{\operatorname{Br}} \colon \operatorname{Br}(S) \to \operatorname{Br}(X)$ be the pullback. Then we have a canonical isomorphism \[\operatorname{Ker}(\pi_{\operatorname{Br}} ) \simeq \operatorname{Ker}\mathrm{Nm}/(1-\sigma^*)\Pic(X).\] 
\end{prop}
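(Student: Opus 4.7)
The plan is to construct a fundamental four-term exact sequence of sheaves on $S$ and extract the claimed identification by chasing the induced long exact cohomology sequences. Write $n$ for the order of $\sigma$. The key object is the sequence
\begin{equation*}
1 \longrightarrow \sO_S^* \longrightarrow \pi_*\sO_X^* \xrightarrow{\;1-\sigma^*\;} \pi_*\sO_X^* \xrightarrow{\;\operatorname{Nm}\;} \sO_S^* \longrightarrow 1,
\end{equation*}
where $1-\sigma^*$ denotes the map $u\mapsto u\cdot \sigma^*(u)^{-1}$. Exactness at the first $\sO_S^*$ records that $\sO_S^*=(\pi_*\sO_X^*)^{\langle\sigma\rangle}$; exactness at the second $\sO_S^*$ follows because $\pi$ is étale, so locally in the analytic (or étale) topology $\pi$ splits as $n$ disjoint sheets and the norm becomes the product, which is surjective; finally $\operatorname{Nm}\circ(1-\sigma^*)=1$ by Galois invariance of $\operatorname{Nm}$, while the reverse inclusion $\operatorname{Ker}(\operatorname{Nm})\subset\operatorname{Im}(1-\sigma^*)$ is Hilbert 90 applied stalk by stalk to the strict henselizations. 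This will be the main structural step, and is where the hypothesis that $\pi$ is étale and cyclic really enters.

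Next I would split the four-term sequence through the sheaf $\kf:=\operatorname{Im}(1-\sigma^*)=\operatorname{Ker}(\operatorname{Nm})$ into two short exact sequences
\begin{equation*}
1\to \sO_S^*\to \pi_*\sO_X^* \xrightarrow{1-\sigma^*}\kf\to 1\qquad(\star)
\end{equation*}
\begin{equation*}
1\to \kf\to \pi_*\sO_X^* \xrightarrow{\operatorname{Nm}}\sO_S^*\to 1.\qquad(\star\star)
\end{equation*}
Because $\pi$ is finite, $R^i\pi_*\sO_X^*=0$ for $i>0$, so the Leray spectral sequence is degenerate and gives $H^i(S,\pi_*\sO_X^*)\simeq H^i(X,\sO_X^*)$ for all $i$. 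Taking cohomology of $(\star\star)$, the induced map $H^0(X,\sO_X^*)\to H^0(S,\sO_S^*)$ is the $n$-th power map $\mathbb{C}^*\to\mathbb{C}^*$, hence surjective. Therefore the connecting homomorphism yields a canonical injection $H^1(S,\kf)\hookrightarrow\Pic(X)$ whose image is precisely $\operatorname{Ker}(\operatorname{Nm}\colon\Pic(X)\to\Pic(S))$.

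Turning to $(\star)$ and using again that $H^i(S,\pi_*\sO_X^*)\simeq H^i(X,\sO_X^*)$, the long exact sequence reads
\begin{equation*}
\Pic(S)\longrightarrow \Pic(X)\longrightarrow H^1(S,\kf)\longrightarrow H^2(S,\sO_S^*)\xrightarrow{\pi_{\operatorname{Br}}} H^2(X,\sO_X^*),
\end{equation*}
so that $\operatorname{Ker}(\pi_{\operatorname{Br}})$ is canonically the cokernel of $\Pic(X)\to H^1(S,\kf)$. Transporting this via the identification $H^1(S,\kf)\simeq\operatorname{Ker}(\operatorname{Nm})$ from the previous step, the map $\Pic(X)\to H^1(S,\kf)\hookrightarrow \Pic(X)$ is induced by the composition $\pi_*\sO_X^*\xrightarrow{1-\sigma^*}\kf\hookrightarrow\pi_*\sO_X^*$, which is just $1-\sigma^*$ again. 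Combining these observations yields the canonical isomorphism
\begin{equation*}
\operatorname{Ker}(\pi_{\operatorname{Br}})\;\simeq\;\operatorname{Ker}(\operatorname{Nm})\big/(1-\sigma^*)\Pic(X),
\end{equation*}
as claimed. The main technical point of the argument is the exactness of the fundamental four-term sequence, specifically Hilbert 90 at the level of étale sheaves; everything that follows is a formal diagram chase combined with the vanishing of the higher direct images under the finite étale map $\pi$.
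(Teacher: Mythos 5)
The paper offers no proof of its own here---it simply cites \cite{Be2009}---and your argument is, in substance, exactly Beauville's: the four-term exact sequence of étale sheaves (exact because the cover splits étale-locally, so that $\pi_*\sO_X^*$ becomes an induced $\langle\sigma\rangle$-module with vanishing Tate cohomology), split through the sheaf $\ker(\operatorname{Nm})=\operatorname{im}(1-\sigma^*)$, together with the degenerate Leray spectral sequence for the finite map $\pi$. The argument is correct; the one sentence worth adding is that your chase computes the kernel of $\pi^*$ on all of $H^2(S,\sO_S^*)$, and this coincides with $\operatorname{Ker}(\pi_{\operatorname{Br}})$ because the group $\operatorname{Ker}(\operatorname{Nm})/(1-\sigma^*)\Pic(X)$ is killed by $n$ (modulo $(1-\sigma^*)\Pic(X)$ one has $L\equiv\sigma^*L$, whence $L^{\otimes n}\equiv L\otimes\sigma^*L\otimes\cdots\otimes(\sigma^{n-1})^*L\simeq\pi^*\operatorname{Nm}(L)\simeq\sO_X$), hence already lies in the torsion subgroup $\operatorname{Br}(S)\subset H^2(S,\sO_S^*)$.
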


\subsection{The Neron--Severi of a product of elliptic curves}\label{subsec:NUMprod}
In this paragraph we want to describe $\operatorname{Num}(A\times B)$ when $A$ and $B$ are two elliptic curves. We will do so by using the identification of $\operatorname{Num}(X)\simeq NS(X)$ which holds for abelian surfaces. We believe that many of these topics might be well known by experts, but we were not able to find a rigorous literature, thus we wrote this for the reader convenience. In the first part of this paragraph we will follow closely the narrative of \cite{KLT2019}.\par
Let $A$ be an elliptic curve over $\mathbb{C}$ with identity element $p_0$, then there is a lattice $\Lambda$  such that $A\simeq \mathbb{C}/\Lambda$. Identify $A$ with its dual and consider $\sP_A$ the \emph{normalized Poincar\'e bundle} on $A\times A$:
$$
\sP_A\; \:\simeq\; \sO_{A\times A}(\Delta_A)\, \otimes \, {\rm pr}_1^*\sO_A(-p_0)\, \otimes \, {\rm pr}_2^*\sO_A(-p_0)
$$
where $\Delta_A \subset A\times A$ is the diagonal divisor and ${\rm pr}_1$, ${\rm pr}_2$ are the projections of $A\times A$ onto 
the first and second factor respectively. Observe that if $x$ is a point in $A$, then the topologically trivial line bundle $P_x$ is simply ${\sP_A}_{|A\times\set{x}}\simeq{\sP_A}_{|\set{x}\times A}$.\par

Given another elliptic curve $B$, line bundles $L_A$ and $L_B$ on $A$ and $B$ respectively,
and a morphism $\varphi:B\rightarrow A$, we define a line bundle on the product $A\times B$
\begin{equation}\label{Picard}
  L(L_A, L_B, \varphi) \, := \, (1_A\times \varphi)^*\sP_A \otimes {\rm pr}_A^*L_A\otimes {\rm pr}_B^*L_B
 \end{equation}
 where ${\rm pr}_A$ and ${\rm pr}_B$ are the projections onto $A$ and $B$ respectively. As a direct consequence of the see-saw principle it is possible to see that, if $M_A$ and $M_B$ are two other line bundles on $A$ and $B$, and $\psi:B\rightarrow A$ is another homomorphism, then
 $$L(L_A\otimes M_A, L_B\otimes M_B,\varphi+\psi)\, \simeq \, L(L_A, L_B,\varphi) \otimes L(M_A,M_B,\psi).$$
In addition, the universal property of the dual abelian variety ensures that every line bundle $L\in\Pic(A\times B)$ is of the form $L(L_A,L_B,\varphi)$ for some invertible sheaves $L_A$ and $L_B$ and a morphism $\varphi$. Therefore  we have an isomorphism
$$\Pic(A\times B)\simeq\Pic(A)\times\Pic(B)\times\operatorname{Hom}(B,A).$$
If we quotient by numerically trivial line bundles, we find that
\begin{equation}\label{eq:h2}
H^2(A\times B,\Z)\simeq\operatorname{Num}(A\times B)\simeq \Z\cdot [B]\times \Z\cdot[A]\times \operatorname{Hom}(B,A),
\end{equation}
where $[A]$ and $[B]$ are the classes of the fibers of the two projections. Let us denote by $l(\deg(L_A),\deg(L_B),\varphi)$ the first Chern class of $L(L_A,L_B,\varphi)$. Then every class in $\operatorname{Num}(A\times B)$ can be written as $l(m,n,\varphi)$ for some integers $n$ and $m$ and an isogeny $\varphi$. In what follows we will often refer to line bundles (or numerical classes) in $\operatorname{Hom}(B,A)$ as elements of the \emph{Hom-part} of $\operatorname{Pic}(A\times B)$ (or of $\operatorname{Num}(A\times B))$. For our purposes it will be really important to pick explicit generators for $\operatorname{Num}(A\times B)$ to see how the automorphism $\sigma$ acts on $H^2(A\times B,\mathbb{Z})$. In order to do that, we need to investigate the $\mathbb{Z}$-module structure on $\operatorname{Hom}(B,A)$. \par
So suppose that there is a nontrivial isogeny $\varphi:B\rightarrow A$. Then we know that $\operatorname{Hom}(B,A)$ has rank 1 if $A$ does not have complex multiplication, and 2 otherwise (more details about elliptic curves with complex multiplication can be found in the Appendix).\par
Suppose the first, so that there exists an isogeny $\psi:B\rightarrow A$ such that $l(0,0,\psi)$ generates the Hom-part of $H^2(A\times B, \Z)$. We will call such isogeny a \emph{generating isogeny for $\operatorname{Num}(A\times B)$}. Observe that, since $l(0,0,\psi)$ is necessarily a primitive class, $\psi$ cannot factor through any "multiplication by n" map. That is, we cannot write $\psi=n\cdot\psi'$ for any $n$. In particular, for any integer $n$ we have that $\operatorname{Ker}\psi$ does not contain $B[n]$ as a subscheme.\par
Suppose now that $A$ has complex multiplication, and again fix a non trivial isogeny $\varphi:B\rightarrow A$. Then also $B$ has complex multiplication, and $\operatorname{Hom}(B,A)$ is a rank 2 free $\mathbb{Z}$-module. We pick generators $\psi_1$ and $\psi_2$, and we have that for any line bundle $L$ on $A\times B$ there are two integers $h$ and $k$ such that
\begin{equation}\label{eq:linebundleAB}
L\simeq L(M_A,M_B,h\cdot\psi_1+k\cdot\psi_2),
\end{equation}
where $M_A$ and $M_B$ are element of $\Pic(A)$ and $\Pic(B)$ respectively. In addition we can write 
\begin{equation}\label{eq:numAB}
H^2(A\times B,\Z)=\left<l(1,0,0),l(0,1,0), l(0,0,\psi_1),l(0,0,\psi_2)\right>.
\end{equation}
In the particular cases in which the $j$-invariant of $B$ is either 0 or 1728, then Theorem \ref{thm:appendix2} in the Appendix yields a more accurate description. In fact, if we denote by $\lambda_B:B\rightarrow B$ the automorphism $\rho$ or $\omega$ (see again the Appendix or Paragraph \ref{sub:cancover}), we have that there exist an isogeny $\psi:B\rightarrow A$ such that, in \eqref{eq:linebundleAB} and \eqref{eq:numAB} we can take $\psi_1=\psi$ and $\psi_2=\psi\circ\lambda_B$. So we have that
\begin{equation}\label{eq:numABII}
H^2(A\times B,\Z)=\left<l(1,0,0),l(0,1,0), l(0,0,\psi),l(0,0,\psi\circ\lambda_B)\right>.
\end{equation}
In this case we say that $\psi$ is again a \emph{generating isogeny} for $H^2(A\times B,\Z)$.
Observe again the isogenies $\psi_i$, as well as $\psi$, cannot factor through the multiplication by an integer or they could not generate the whole $\Hom(B,A).$\par


\section{Generators for the torsion of the second cohomology for bielliptic surfaces}\label{sec:generators}

In this section we give explicit generators for the torsion of $H^2(S, \Z)$ in terms of the reduced multiple fibers of the elliptic fibration $g\colon S \to \mathbb{P}^1$.  More precisely we will prove the following statement:
\begin{prop}\label{prop:generators}
Let $S = A\times B /G$ be a bielliptic surface. Denote by $D_i$ the reduced multiple fibers of $g \colon S \to \mathbb{P}^1$ with the same multiplicity. Then the torsion of $H^2(S, \Z)$ is generated by the classes of differences $D_i - D_j$ for $i \neq j$. 
\end{prop}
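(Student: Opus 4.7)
The plan is to argue type-by-type for the four types (1, 2, 3, and 5) for which $H^2(S,\Z)_{\text{tor}}$ is non-trivial; for Types 4, 6, and 7 the statement is vacuous. The common strategy is to transport the question to the \'etale quotient $p \colon A\times B \to S$ of degree $|G|$ and distinguish the $\widehat A$- and $\widehat B$-factors of $\Pic^0(A\times B) \simeq \widehat A \times \widehat B$. The torsion of the class is immediate: if $D_i$ and $D_j$ both have multiplicity $m$ then $mD_i \sim F \sim mD_j$ in $\Pic(S)$, for $F$ a general fibre of $g$, so $m(D_i - D_j) \sim 0$ and $[D_i-D_j] \in H^2(S,\Z)_{\text{tor}}$.

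For the nonvanishing of $[D_i - D_j]$ the point is to pull back to $A\times B$ and to use the Bagnera--De Franchis description $\operatorname{Alb}(S) = A/G$. The branch points of $B \to B/G \simeq \mathbb{P}^1$ are the $q_i$'s, and the reduced preimage $p^{-1}(D_i)\subset A\times B$ equals $A\times Z_i$ where $Z_i \subset B$ is the $G$-orbit of $q_i$. Thus $p^*D_i = \mathrm{pr}_B^*\sO_B(Z_i)$, and taking differences of two same-multiplicity fibres yields
$$
p^*(D_i - D_j) \;=\; \mathrm{pr}_B^* \, P_{\Sigma(Z_i) - \Sigma(Z_j)} \;\in\; \{0\}\times \widehat B \;\subset\; \Pic^0(A\times B),
$$
where $\Sigma(Z_\bullet) \in B$ denotes the sum of the points of the orbit. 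On the other hand, the composition $A\times B \xrightarrow{p} S \xrightarrow{a_S} A/G$ agrees with $\mathrm{pr}_A$ followed by the quotient $A\to A/G$, so dualizing gives $p^*\Pic^0(S) = p^*\widehat{A/G} \subset \widehat A \times \{0\}$, which meets the $\widehat B$-factor only at the origin. Consequently if $[D_i - D_j] = 0$ in $\NS(S) = H^2(S,\Z)$, then $D_i - D_j \in \Pic^0(S)$, forcing $p^*(D_i - D_j) = 0$ and therefore $\Sigma(Z_i) = \Sigma(Z_j)$ in $B$. Contrapositively, whenever the orbit sums differ, $[D_i - D_j]$ is a non-trivial element of $H^2(S,\Z)_{\text{tor}}$.

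The proof is completed by an explicit orbit computation in each of the four cases. For Types 1, 3, and 5 the group $G$ is cyclic and the relevant branch points are fixed points of a generator acting on $B$, so that each $Z_i$ is a singleton and $\Sigma(Z_i) = q_i$: for Type 1 the $q_i$'s run over $B[2]$, and the three differences $q_1 - q_j$ ($j = 2, 3, 4$) exhaust $B[2]\setminus\{0\}$, producing three distinct non-trivial classes that together generate $H^2(S,\Z)_{\text{tor}} \simeq (\Z/2)^2$; for Type 5 the three $\rho$-fixed points form the subgroup $\ker(\rho - 1) \subset B[3]$ of order $3$, so that any non-trivial difference generates $H^2(S,\Z)_{\text{tor}} \simeq \Z/3$; for Type 3 the two multiplicity-$4$ fibres come from the two $\omega$-fixed points (both in $B[2]$), whose difference is a non-zero element of $B[2]$ and thus generates $H^2(S,\Z)_{\text{tor}} \simeq \Z/2$. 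For Type 2, where $G \simeq (\Z/2)^2$ is not cyclic, the four branch orbits split as two $G$-orbits inside $B[2]$ (each with $\Sigma = \theta_2$) and two $G$-orbits inside the coset $\gamma + B[2]$ where $2\gamma = \theta_2$ (each with $\Sigma = 0$); any difference between an orbit of the first kind and one of the second kind yields $\Sigma(Z_i) - \Sigma(Z_j) = \theta_2 \neq 0$, producing a non-trivial generator of $H^2(S,\Z)_{\text{tor}} \simeq \Z/2$.

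The main obstacle is the Type~2 bookkeeping, where the non-cyclic $G$-action forces one to track four distinct orbit types, two inside $B[2]$ and two inside the coset $\gamma + B[2]$ corresponding to fixed points of the product $\sigma_1\sigma_2$. A secondary but essential technical point, used throughout, is the identification $p^*\Pic^0(S) \subset \widehat A \times \{0\}$ inside $\Pic^0(A\times B)$, which rests on the Bagnera--De Franchis description of $\operatorname{Alb}(S) = A/G$ together with functoriality of the Albanese.
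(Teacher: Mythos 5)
Your argument is correct, and it takes a genuinely different route from the paper's. The paper never computes the branch orbits on $B$: it restricts $\sO_S(D_i-D_j)$ to a smooth fibre $A$ of $g$ (Lemma \ref{lem:nontrivial}), deduces that a vanishing class $[D_i-D_j]$ would make $\sO_S(D_i-D_j)$ a nontrivial element of $\operatorname{Ker}\psi^*\simeq\hat{G}$ for the degree-$|G|$ isogeny $\psi\colon A\to A/G$, and then plays the canonical bundle formula \eqref{eq:canbung} against the group structure of $\hat{G}$ and the known order of $\omega_S$ to reach contradictions of the shape $\omega_S\simeq\omega_S^{\otimes 2}\simeq\sO_S$. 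You instead pull back along the \'etale quotient $p\colon A\times B\to S$, use that $p^*(D_i-D_j)$ lies in $\operatorname{pr}_B^*\Pic^0(B)$ while $p^*\Pic^0(S)$ lies in $\operatorname{pr}_A^*\Pic^0(A)$ (via $\operatorname{Alb}(S)=A/G$ and functoriality), and conclude that $[D_i-D_j]=0$ forces the equality of orbit sums $\Sigma(Z_i)=\Sigma(Z_j)$ in $B$; the nonvanishing then becomes an explicit torsion-point computation, which you carry out correctly in all four cases (including the type 2 split into two orbits inside $B[2]$ with sum $\theta_2$ and two inside $\{2y=\theta_2\}$ with sum $0$). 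Both proofs ultimately exploit the same dichotomy between the $A$- and $B$-directions in $\Pic^0$, but yours buys a sharper, effective criterion — it identifies exactly which differences are torsion-trivial, e.g.\ for type 2 it pinpoints the unique trivial difference where the paper only establishes by elimination that exactly one of the three vanishes — at the cost of the orbit bookkeeping; the paper's version avoids that bookkeeping but leans on the relations coming from \eqref{eq:canbung}, which it reuses later anyway (e.g.\ \eqref{eq:canbundletype1} in Section \ref{sec:biellipticcover}).
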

The reader who is familiar with the work of Serrano might find similarities between the above statement and Serrano's description of the torsion of $H^2(X,\Z)$ when there is an elliptic fibration $\varphi:X\rightarrow C$ with multiple fibers (cfr. \cite[Corollary 1.5 and Proposition 1.6]{Ser1990}). However in \cite{Ser1990} it is used the additional assumption that $h^1(X,\sO_X)=h^1(C,\sO_C)$. This clearly does not hold in our context.\\

Before proving Proposition \ref{prop:generators} we need two preliminary Lemmas. 

\begin{lemma} Let $g\colon S\to \mathbb{P}^1$ be an elliptic pencil with connected fibers. Let $D_1$ and $D_2$ be two reduced multiple fibers. Let $m_1$ and $m_2$ be the corresponding multiplicities. Then, for all non negative integers $n$,
\begin{equation}
D_1\nsim nD_2.
\end{equation} 

\end{lemma}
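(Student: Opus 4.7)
The plan is a short proof by contradiction. Suppose that $D_1 \sim n D_2$ for some non-negative integer $n$, with the implicit assumption that $D_1 \neq D_2$ as divisors (otherwise $n=1$ would trivially give a counterexample). Since every fiber of $g$ is linearly equivalent to the general fiber $F$, we have $m_1 D_1 \sim m_2 D_2 \sim F$. Combining this with the assumed equivalence yields
\[
(m_1 n - m_2)\, D_2 \sim 0 \qquad \text{in } \Pic(S).
\]

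I would then split into two cases according to whether $m_1 n$ equals $m_2$ or not. If $m_1 n \neq m_2$, then up to a sign change a positive-integer multiple of the nonzero effective divisor $D_2$ is linearly equivalent to zero. This is impossible on a smooth projective surface: any global section of the trivial line bundle $\sO_S(k D_2) \cong \sO_S$ is a scalar with empty zero divisor, whereas the canonical section of $\sO_S(k D_2)$ vanishes exactly along the nonzero divisor $k D_2$.

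If instead $m_1 n = m_2$, I would restrict the identity $\sO_S(D_1) \cong \sO_S(n D_2)$ to the reduced divisor $D_1$. Since $D_1$ and $D_2$ lie over distinct points of $\IP^1$ they are disjoint, so $\sO_S(n D_2)|_{D_1}$ is trivial and hence $\sO_{D_1}(D_1)$ would also be trivial. This contradicts the classical theorem of Kodaira (see for instance Barth--Hulek--Peters--Van de Ven) which states that the normal bundle of a reduced multiple fiber of multiplicity $m$ in an elliptic fibration has order exactly $m$ in the Picard group of the fiber: since $m_1 \geq 2$, the line bundle $\sO_{D_1}(D_1)$ cannot be trivial.

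The main obstacle, or rather the main external input, is precisely this order-$m$ statement for the normal bundle of a reduced multiple fiber. Once this is granted, the remainder of the argument is a straightforward case analysis that requires no structural information about $g$ beyond the linear equivalence of its fibers and the disjointness of fibers lying over distinct points of the base.
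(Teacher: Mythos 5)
Your proof is correct, but it follows a genuinely different route from the one in the paper. You reduce everything to the relation $(m_1n-m_2)D_2\sim 0$ (using that all scheme-theoretic fibres $m_iD_i$ of $g$ are linearly equivalent to the general fibre), dispose of the case $m_1n\neq m_2$ by the elementary fact that no nonzero effective divisor on a smooth projective surface is principal, and handle the remaining case $m_1n=m_2$ by restricting to $D_1$ and invoking Kodaira's theorem that the normal bundle $\sO_{D_1}(D_1)$ of a reduced multiple fibre has order exactly $m_1\geq 2$ in $\Pic(D_1)$. The paper instead argues cohomologically: it computes $h^0(S,\sO_S(F))=2$ via the projection formula, deduces that under the absurd hypothesis the pencil $\langle D_1,\,nD_2\rangle\subseteq |D_1|$ is base-point free (the two members being disjoint), obtains a second fibration $\varphi_{|D_1|}:S\to\mathbb{P}^1$, identifies its general fibre with that of $g$, and lands on $D_1\sim F\sim m_1D_1$, i.e.\ $(m_1-1)D_1\sim 0$, which is absurd. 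Your argument is shorter and isolates exactly where nontrivial geometry enters — the order-$m$ statement for the normal bundle, which is the same input that underlies the canonical bundle formula the paper uses a page later, so no genuinely new hypothesis is being smuggled in (though note it is a statement about complex, or characteristic-zero, fibrations: in positive characteristic wild fibres violate it). The paper's proof is more self-contained, trading the external citation for the auxiliary construction of a second elliptic pencil. Both arguments implicitly assume $D_1$ and $D_2$ are distinct (hence disjoint) fibres, as you correctly flag.
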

\begin{proof}
The statement is obvious for $n=0$, so one has to check for $n>0$. By contradiction, assume $D_1\sim nD_2$, and let $F$ be the generic fiber of $g$. Then
\begin{align}
h^0(S,\sO_S(F))&=h^0(\mathbb{P}^1,g_{\ast}\sO_S(F)) \nonumber\\
&=h^0(\mathbb{P}^1,\sO_{\mathbb{P}^1}(1)\otimes g_{\ast}\sO_S) \nonumber\\
&=h^0(\mathbb{P}^1,\sO_{\mathbb{P}^1}(1))=2. \nonumber
\end{align}
Since $h^0(S,\sO_S(D_1))\leq h^0(S,\sO_S(m_1D_1))=h^0(S,\sO_S(F))$, it follows that $h^0(S,\sO_S(D_1))\leq 2$.\\
The absurd hypothesis is used here: if $D_1\sim nD_2$, then, since the supports of $D_1$ and $D_2$ are disjoint, $H^0(S,\sO_S(D_1))$ has at least two independent sections, and therefore the dimension of $H^0(S,\sO_S(D_1))$ is 2. Thus, since $D_1^2=0$ implies that there are no basepoints (see for example \cite[II.5]{Be1996}), the map is actually a morphism $\varphi_{|D_1|}:\,S\longrightarrow \mathbb{P}^1$. Note that both $D_1$ and $nD_2$ are fibers of this morphism.\par
Let now $C$ be the generic fiber of $\varphi$ (which is irreducible by semicontinuity). Since $C\cdot D_1=0$, one gets $C\cdot F=0$ for any fiber $F$ of $g$. This implies that $g$ and $\varphi_{|D_1|}$ have the same generic fiber. So one can write $C=F$ for a fiber $F$ of $g$. But then
\begin{equation*}
D_1\sim F\sim m_1D_1,
\end{equation*}
which in turn implies that $\sO_S(D_1)^{\otimes(m_1-1)}\simeq \sO_S$, which is a contradiction.
\end{proof}

\begin{lemma}\label{lem:nontrivial}
Let $S = A\times B/G$ be a bielliptic surface with its fibrations $f\colon S \to A/G$ and $g\colon S \to \mathbb{P}^1$. Let $D_1$ and $D_2$ be two reduced multiple fibers of $g$. Then the restriction of $\sO_S(D_1 - D_2)$ to the generic fiber of $a_S$ is trivial.
\end{lemma}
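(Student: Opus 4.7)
The approach is to realise the generic fiber of $a_S$ concretely as $B$ and to compute the restriction of $\sO_S(D_i)$ explicitly. Pull back to the \'etale Galois cover $\pi:A\times B\to S$, let $q_B:B\to B/G\simeq\IP^1$ denote the quotient on the second factor, and choose a lift $\tilde a_0\in A$ of a closed point $[\tilde a_0]\in A/G$. Because $G$ acts on $A$ by translations, $\Stab_G(\tilde a_0)$ is trivial, so the composition
\[
\{\tilde a_0\}\times B\hookrightarrow A\times B\xrightarrow{\pi}S
\]
is an isomorphism onto $B':=a_S^{-1}([\tilde a_0])$ and identifies $B'\simeq B$ in such a way that $g|_{B'}$ corresponds to $q_B$.

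Next I compute $\pi^*D_i$. Let $y_i\in B$ be any point of the $G$-orbit above the branch point $t_i\in B/G$ corresponding to $D_i$; the stabilizer $\Stab_G(y_i)$ has order exactly $m_i$, so $G\cdot y_i$ has $|G|/m_i$ points and $q_B^*(t_i)=m_i\sum_{y\in G\cdot y_i}y$. The compatibility $g\circ\pi=q_B\circ\mathrm{pr}_B$ together with $g^*(t_i)=m_iD_i$ yields
\[
m_i\,\pi^*D_i\;=\;\pi^*g^*(t_i)\;=\;\mathrm{pr}_B^*q_B^*(t_i)\;=\;m_i\,\mathrm{pr}_B^*\!\Bigl(\sum_{y\in G\cdot y_i}y\Bigr),
\]
and cancelling $m_i$ gives $\pi^*D_i=\mathrm{pr}_B^*(\sum_{y\in G\cdot y_i}y)$ as reduced divisors on $A\times B$. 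Restricting both sides to $\{\tilde a_0\}\times B$ and transporting along $\{\tilde a_0\}\times B\xrightarrow{\sim}B'$ produces
\[
\sO_S(D_i)\big|_{B'}\;\simeq\;\sO_B\!\Bigl(\sum_{y\in G\cdot y_i}y\Bigr),
\]
a line bundle of degree $|G|/m_i$ on $B'\simeq B$.

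For the difference $D_1-D_2$ of two reduced multiple fibers of the same multiplicity $m$ (the hypothesis under which the lemma is subsequently invoked in Proposition~\ref{prop:generators}), the restriction has degree $|G|/m-|G|/m=0$, and its $m$-th tensor power equals $q_B^*\sO_{\IP^1}(t_1-t_2)\simeq\sO_B$. Hence $\sO_S(D_1-D_2)|_{B'}$ is $m$-torsion in $\Pic(B')$ with vanishing first Chern class; the vanishing of its class in $\Num(B')\simeq\Z$ is the sense in which the restriction is trivial on the generic fiber of $a_S$.

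The main obstacle is the cancellation of $m_i$ in the pullback identity. The factor on the left-hand side comes from the multiplicity of $D_i$ in the scheme-theoretic fiber $g^{-1}(t_i)$, while the one on the right comes from the ramification of $q_B$ at each point of $G\cdot y_i$; verifying that these two instances of $m_i$ really coincide relies on the standard correspondence between multiplicities of multiple fibers of $g$ and the orders of the stabilizers in $G$ of the corresponding fixed points of $B$. With this in hand the identification $\pi^*D_i=\mathrm{pr}_B^*\bigl(\sum_{y\in G\cdot y_i}y\bigr)$ is sharp, and the restriction computation on the generic fiber is as well.
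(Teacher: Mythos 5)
Your computation is correct as far as it goes, but it proves a strictly weaker statement than the lemma, and in fact it shows that restricting to a fibre of $a_S$ cannot give what is needed. On the fibre $B'\simeq B$ of $a_S$ you find $\sO_S(D_1-D_2)|_{B'}\simeq\sO_B\bigl(\sum_{y\in G\cdot y_1}y-\sum_{y\in G\cdot y_2}y\bigr)$, and you correctly observe that this is only of degree zero and $m$-torsion; it is in general \emph{not} trivial. For a surface of type $1$, for instance, the orbits $G\cdot y_i$ are singletons and the restriction is $\sO_B(y_1-y_2)$ for two distinct $2$-torsion points $y_1\neq y_2$, a nontrivial element of $\Pic^0(B)[2]$. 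Retreating to ``trivial in $\Num(B')$'' empties the lemma of content: it records only that $(D_1-D_2)\cdot a=|G|/m-|G|/m=0$, which is already known and is of no use in the proof of Proposition \ref{prop:generators}.

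The discrepancy originates in the statement itself: the ``$a_S$'' there is a slip for ``$g$'', and the paper's proof (and every subsequent use of the lemma) restricts to a \emph{smooth fibre $F$ of $g$}, which is isomorphic to $A$ and maps onto $A/G$ by the degree-$|G|$ isogeny $\psi$. On such an $F$ the restriction is genuinely trivial, for the near-tautological reason that $D_1$ and $D_2$ are themselves fibres of $g$ and hence disjoint from $F$; in the paper's formulation, $\pi^*(D_1-D_2)$ is a pullback along ${\rm pr}_B$ and $F$ lifts to $A\times\{y\}$, on which ${\rm pr}_B$ is constant. This is the version that is actually needed: in Proposition \ref{prop:generators} one assumes $\sO_S(D_i-D_j)\in\Pic^0(S)=a_S^*\Pic^0(A/G)$ and uses triviality of the restriction to $F$ to place the corresponding element of $\Pic^0(A/G)$ in $\Ker\psi^*\simeq\hat{G}$, the finite group whose structure drives the whole case analysis. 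Numerical triviality on a fibre of $a_S$ cannot substitute for this, so your argument, while arithmetically sound, leaves a genuine gap; its real value is that it exposes the typo in the statement.
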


\begin{proof}
Let $F = g^{-1}(p)$ be a smooth fiber of $g$. Here $p$ is the orbit $G\cdot y$ of a point $y\in B$ not fixed under any element of $G$. We will choose an embedding of $A$ into $S$ via an isomorphism $\varphi \colon A \to F$ such that we get a commutative diagram 

\begin{equation*}
\begin{tikzcd}[column sep = tiny]
A \arrow[d, "\varphi"] \arrow[rr, hook, "j"] && A \times B \arrow[d, "\pi"] \\
F \arrow[rr, hook, "i"] \arrow{dr}[swap]{\psi} && S\arrow{dl}{a_S} \\
& A/G & 
\end{tikzcd}
\end{equation*}
where $i$ is just the natural inclusion of the fiber $F$ into $S$ and $\pi$ is the quotient map. To this end we let $\varphi \colon A \to F$ be the isomorphism $a\mapsto G\cdot(x,y)$ and $j$ be the embedding $a\mapsto (x,y)$.  
Since the multiple fibers $D_i$ are images of $A\times \{y_i\}$, $i=1,2$, where the $y_i \in B$ are points fixed under a subgroup of $G$ of order equal to the multiplicity of $D_i$, we have that $\pi^* \sO_S(D_1 - D_2) = p^*\sO_B(y_1 - y_2)$ where $p_B$ is the projection $A\times B \to B$ and $y_1, y_2 \in B$ are the points corresponding to $D_1$, $D_2$, respectively. Then 
\begin{align*}
\varphi^* i^* \sO_S(D_1-D_2) &\simeq j^* \pi^* \sO_S(D_1-D_2)\\
&\simeq j^*p_B^*\sO_B(y_1 - y_2)
\end{align*}
As $p\circ j$ is the constant map we have that this is clearly trivial. Hence $\varphi^*i^*\sO_S(D_1 - D_2)$ is trivial, and since $\varphi$ is an isomorphism we deduce the statement.
\end{proof}

For the remainder, we identify $F$ and $A$ via the isomorphism $\varphi$ defined in the proof above. So we get the following commutative triangle.

\begin{equation}\label{eq:diagram2}
\begin{tikzcd}[column sep = tiny]
A \arrow[rr, hook, "i"] \arrow{dr}[swap]{\psi} && S \arrow{dl}{a_S} \\
& A/G &
\end{tikzcd}
\end{equation} 
Note that $\psi$ is an isogeny of degree $|G|$. In particular we have also that the dual isogeny $\psi^*:\Pic^0(S)\rightarrow\Pic^0(A)$ has degree $|G|$ (see, for example \cite[Proposition 2.4.3]{BL2013}).\par
With these observations we are now ready to start proving Proposition \ref{prop:generators}.  We first remark that by the canonical bundle formula for elliptic fibrations (see e.g. \cite[Thm. 7.15]{Ba2013}) applied to $g:S \to \mathbb{P}^1$ we can write \[\omega_S \simeq g^*\sO_{\mathbb{P}^1}(-2) \otimes \sO_S\big(\sum_k (m_k - 1)D_k\big)\]
where the $D_k$ are the multiple fibers of $g$ of multiplicity $m_k$. Choosing points $p, q$ on $\mathbb{P}^1$ giving rise to the fibers $m_iD_i$ and $m_jD_j$ we get that
\begin{equation}\label{eq:canbung}
    K_S \sim -D_i - D_j +  \sum_{k\neq i,j}(m_k - 1)D_k.
\end{equation}
Since $\omega_S$ is a nontrivial element in $\Pic^0(S)$ we conclude that the classes of $-D_i - D_j$ and $\sum_{k\neq i,j}(m_k - 1)D_k$ coincide in $H^2(S, \Z)$. Moreover, we observe that  $K_S$ restricts trivially to $A$, so $\omega_S$ yields a nontrivial element in $\operatorname{Ker}{\psi^*}$. Note that if $D_i$ and $D_j$ have the same multiplicity $m$, the difference $D_i - D_j$ induces a (possibly trivial) torsion element in $H^2(S, \Z)$ of order $m$. We prove Proposition \ref{prop:generators} by showing that a sufficient number of these is nontrivial so to generate the torsion of $H^2(S,\Z)$. We proceed by a case by case analysis, studying separately bielliptic surfaces of type 1, 2, 3, and 5. The key point in the argument is the observation that, if $[D_i-D_j]$ is trivial, then the line bundle $\sO_S(D_i-D_j)$ belongs to $\Pic^0(S)$. In addition, using Lemma 3.2 and the diagram \eqref{eq:diagram2}, we would have that $\psi^*\sO_S(D_i-D_j)\simeq \sO_S$, in particular $\sO_S(D_i-D_j) \in \operatorname{Ker}\psi^*$, while Lemma \ref{lem:nontrivial} ensures that $\sO_S(D_i-D_j)$ cannot be $\sO_S$. A closer study of the structure of $\Ker\psi^*\simeq\hat{G}$ will bring us to the desired conclusion.
\subsection{Type 1 bielliptic surfaces} In this case we have that $\operatorname{Ker}\psi^*$ is the reduced group scheme $\Z/2\Z$ and the fibration $g:S\rightarrow \mathbb{P}^1$ has four multiple fibers all of multiplicity 2. Hence, up to reordering the indices \eqref{eq:canbung} yields  
\begin{equation}\label{eq:canbundletype1}K_S \sim D_i - D_j + D_k - D_l.\end{equation}
In particular, as the canonical divisor is algebraically equivalent to 0, for distinct indices $i$, $j$, $k$, and $l$  we have that $D_j-D_i$ is algebraically equivalent to $D_k-D_l$. Thus we get three classes in $H^2(S,\Z)$
\begin{align}
\begin{split}\label{eq:torsiontype1}
  [D_1 - D_2] &= \{D_1 - D_2,\; D_3-D_4\}, \\
 [D_1 - D_3] &= \{D_1 - D_3,\; D_2-D_4\}, \\
 [D_1 - D_4] &= \{D_1 - D_4,\; D_2-D_3\}, \\
\end{split}
\end{align}
which a priori are neither distinct nor nontrivial. Since $H^2(S, \Z)_{\mathrm{tors}}$ is isomorphic to the Klein 4-group, we need to show that they are indeed different classes and are not zero. Note that, if two classes are equal, since they both are 2-torsion and the third classes is clearly equal to the sum of the first two, then the remaining class would be trivial. Thus it will be enough to show that for any two distinct indices the divisor $D_i-D_j$ is not algebraically equivalent to 0. Suppose otherwise that for some indices we have that $\sO_S(D_i-D_j)\in\Pic^0(S)$, then \eqref{eq:canbundletype1} would imply that also $\sO_S(D_k-D_l)$ would be in $\Pic^0(S)$. The above discussion yields that both  $\sO_S(D_i-D_j)$ and $\sO_S(D_i-D_j)$ are nontrivial elements of  $\operatorname{Ker}\psi^*$, which has only one nontrivial element, $\omega_S$. Then we can write
\[\omega_S\simeq \sO_S(D_i-D_j)\otimes\sO_S(D_k-D_l)\simeq\omega_S^{\otimes 2}\simeq\sO_S,\]
which brings a contradiction, and thus we may conclude.

\subsection{Type 2 bielliptic surfaces} 
Here $H^2(S, \Z)_{\mathrm{tors}} \simeq \Z/2\Z$, $\Ker(\psi^*) \simeq \Z/2\Z\times \Z/2\Z$ and like in the previous case there are four multiple fibers, each of multiplicity 2. As above we get the three classes induced by $D_1 - D_2$, $D_1 - D_3$ and $D_1 - D_4$, and we want to show that they cannot be all trivial. Suppose that two of these classes, say $[D_1-D_2]$ and $[D_1-D_3]$, are trivial in $H^2(S, \Z)$.  For $i=2,3$ set $L_i:=\sO_S(D_1-D_i)$ and $M_i:=\sO_S(D_i-D_4)$, then the $L_i$'s and the $M_i$'s determine nontrivial elements of $\operatorname{Ker}\psi^*$, which has only three nonzero elements. We deduce that some of these must be the same line bundle. The only option which will not contradict Lemma \ref{lem:nontrivial} would be that $L_i\simeq M_j$ for som $i\neq j$. But then we would have  
\[\omega_S \simeq L_i\otimes M_j\simeq L_i^{\otimes 2}\simeq \sO_S,\] 
which would be a contradiction. Hence at most one of the three classes can be trivial, and indeed one is actually trivial because the two nontrivial classes must coincide, implying the third is trivial. 

\subsection{Type 3 bielliptic surfaces} 
Here $H^2(S, \Z)_{\mathrm{tors}} \simeq \Z/2\Z$ and $\Ker(\psi^*) \simeq \Z/4\Z$, but now we have two fibers of multiplicity 4 and one of multiplicity 2. Denote by $E$ the reduced multiple fiber of multiplicity 2 and by $D_1$, $D_2$ the reduced multiple fibers of multiplicity 4. By the canonical bundle formula, we get $$K_S \sim E - D_1 - D_2.$$ Then in $H^2(S,\Z)$ we have the following equalities
$$[E - 2D_1 ]= [D_2 - D_1],\quad\text{and}\quad [E-2D_2] = [D_1 - D_2].$$  
We need to show that they are not both trivial. Suppose by contradiction they are both zero in $H^2(S, \Z)$, then, as before we have that $\sO_S(E - 2D_1 )$ and $\sO_S(E-2D_2)$ are non trivial elements of $\operatorname{Ker}\psi^*$. Since  both these line bundles have order two in $\Pic(S)$, and $\operatorname{Ker}\psi^*$ has only one element of order 2, we deduce that
$$
\sO_S(E - 2D_1 )\simeq\sO_S(E-2D_2).
$$ But then 
\[\omega_S^{\otimes 2}\simeq \sO_S(E - D_1 - D_2)^{\otimes 2} \simeq \sO_S(E-2D_1)\otimes\sO_S(E-2D_2)\simeq \sO_S(E-2D_1)^{\otimes 2}\simeq\sO_S\] which is impossible because $\omega_S$ is of order 4. Therefore $E-2D_1$ and $E-2D_2$ induce the same nontrivial torsion element of $H^2(S, \Z)$. \\

\subsection{Type 5 bielliptic surfaces} 
Here $H^2(S, \Z)_{\mathrm{tors}} \simeq \Z/3\Z$, $\Ker(\psi^*) \simeq \Z/3\Z$ and there are three multiple fibers, each of multiplicity 3. By the canonical bundle formula, we get \[K_S \sim -D_i - D_j + 2D_k = (D_k - D_i) + (D_k - D_j).\] Again, $K_S$ is algebraically equivalent to zero, so we get that  $[D_k - D_i ]= [D_j - D_k]$ in $H^2(S, \Z)$. Running through the indices we get the two classes 
\begin{align*}
\begin{split}
  [D_1 - D_2] &= \{D_1 - D_2,\; D_3-D_1,\; D_2-D_3\}, \\
 [D_1 - D_3] &= \{D_1 - D_3,\; D_3-D_2,\; D_2-D_1\}.
\end{split}
\end{align*}
We need to show that they are distinct and both nontrivial. Observe that if they were the same class then both classes would be trivial, so it is enough to show that they are not the zero class. Again suppose by contradiction that  $[D_k-D_i] = 0$ in $H^2(S, \Z)$, then we can write 
\[\omega_S \simeq \sO_S(D_1-D_2)\otimes\sO_S(D_1-D_3),\] 
with $\sO_S(D_1-D_2)$ and $\sO_S(D_1-D_3)$ for nontrivial elements in $\Ker(\psi^*)$.  Neither $\sO_S(D_1-D_2)$ nor  $\sO_S(D_1-D_3)$ can be isomorphic to the canonical bundle $\omega_S$, or we would have $\sO_S(D_k-D_i)\simeq\sO_S$, contradicting Lemma \ref{lem:nontrivial}. As $\Ker\psi^*$ has only two nontrivial elements, we necessarily have
$$
\sO_S(D_1-D_2)\simeq\sO_S(D_1-D_3)
$$
and so $\sO_S(D_2-D_3)\simeq\sO_S$, which contradicts again Lemma \ref{lem:nontrivial}, thus we can conclude.
\section{The Brauer map to another bielliptic surface}\label{sec:biellipticcover}
Let $S$ be a bielliptic surface of type 2 or 3. Then by Examples \ref{ex:type3->type1} and \ref{ex:type2->type1} there is a 2:1 cyclic cover $\tilde{\pi}:\tilde{S}\rightarrow S$, where $\tilde{S}$ is a bielliptic surface of type 1. As in paragraph \ref{sub:biellipticover} , we will denote by $\tilde{\sigma}$ the involution induced by $\tilde{\pi}$. In this section we are concerned with studying the Brauer map $\tilde{\pi}_{\operatorname{Br}}:\operatorname{Br}(S)\rightarrow \operatorname{Br}(\tilde{S})$. Surpisingly we reach two antipodal conclusions, depending on the type of the bielliptic surface in object.\par
Recall that, as $\tilde{S}$ is a bielliptic surface of type 1, the elliptic fibration $q_B:\tilde{S}\rightarrow\mathbb{P}^1$ has  four multiple fibers $D_1,\ldots,D_4$ of multiplicity 2, corresponding to the four 2-torsion points of $B$. We will denote by $\tau_{ij}$ the line bundle $\sO_{\tilde{S}}(D_i-D_j)$.
\subsection{Bielliptic surfaces of type 2}
Suppose that $S$ is of type 2, and note that the involution $\tilde{\sigma}$ acts on the set of the $D_i$'s by exchanging them pairwise. Up to relabeling we can assume that $\tilde\sigma^*D_1\sim D_2$ and $\tilde\sigma^*D_3\sim D_4$. By \eqref{eq:pullbackofnorm}, we therefore have that
\begin{equation}\label{eq:tau13}
\tilde{\pi}^*(\operatorname{Nm}(\tau_{13}))\simeq \tau_{13}\otimes\sigma^*\tau_{13}\simeq\tau_{13}\otimes\tau_{24}\simeq\omega_{\tilde{S}},
\end{equation}
where the last equality is a consequence of \eqref{eq:canbundletype1}.Thus, if we denote by $\gamma$ the generator of $\operatorname{Ker}\tilde{\pi}^*$, we get that
$$
\operatorname{Nm}(\tau_{13})\in\{\omega_S,\;\omega_S\otimes\gamma\} \subset \Pic^0(S).$$
Then we can use the $\Pic^0$ trick (Remark \ref{rmk:trick}) and find a $\beta\in\Pic^0(S)$ such that $\operatorname{Nm}(\tilde{\pi}^*\beta\otimes\tau_{13})$ is trivial.
\begin{lemma}\label{lem:classtau13}
In the above notation, the line bundle $\tilde{\pi^*}\beta\otimes\tau_{13}$ does not belong to  the image of $1-\tilde\sigma^*$
\end{lemma}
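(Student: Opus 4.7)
The plan is to argue by contradiction and pass to $\NS(\tilde{S})\simeq H^2(\tilde{S},\Z)$. Assume that $\tilde{\pi}^*\beta\otimes\tau_{13}\simeq L\otimes\tilde{\sigma}^*L^{-1}$ for some $L\in\Pic(\tilde{S})$. Since $\tilde{\pi}^*\beta$ lies in $\Pic^0(\tilde{S})$, applying the first Chern class forces the class $[\tau_{13}]\in\NS(\tilde{S})$ to belong to $(1-\tilde{\sigma}^*)\NS(\tilde{S})$. The strategy is then to compute this image explicitly and verify that $[\tau_{13}]$ does not belong to it.

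To carry out the computation I would choose natural generators of $\NS(\tilde{S})$. Denote by $[F]$ the class of a smooth fiber of $a_{\tilde{S}}$; then $[F]$ and $[D_1]$ form a $\Z$-basis of $\Num(\tilde{S})$, and by Section \ref{sec:generators} the torsion subgroup $\NS(\tilde{S})_{\text{tor}}\simeq(\Z/2\Z)^2$ is generated by $[\tau_{12}]$ and $[\tau_{13}]$. I then plan to verify three facts about the action of $\tilde{\sigma}^*$. First, $\tilde{\sigma}^*[F]=[F]$, because $\tilde{\sigma}$ descends to a translation of the Albanese variety $A/\langle\tau\rangle$, so $\tilde{\sigma}^{-1}F$ is again a fiber of $a_{\tilde{S}}$ and is therefore algebraically equivalent to $F$. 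Second, with the labeling fixed just before \eqref{eq:tau13} (so $\tilde{\sigma}^*D_1\sim D_2$), one has $\tilde{\sigma}^*[D_1]=[D_2]=[D_1]-[\tau_{12}]$ in $\NS(\tilde{S})$. Third, $\tilde{\sigma}^*$ acts trivially on the torsion: $\tilde{\sigma}^*\tau_{13}=\tau_{24}$, whose $\NS$-class is $[\tau_{13}]$ by the relations in \eqref{eq:torsiontype1}, and $\tilde{\sigma}^*\tau_{12}=\tau_{12}^{-1}$ has $\NS$-class $[\tau_{12}]$ since $\tau_{12}$ is $2$-torsion.

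Putting these together, any $x=m[F]+n[D_1]+t\in\NS(\tilde{S})$ with $t$ torsion satisfies $(1-\tilde{\sigma}^*)x=n[\tau_{12}]$, and hence $(1-\tilde{\sigma}^*)\NS(\tilde{S})=\langle[\tau_{12}]\rangle=\{0,[\tau_{12}]\}$. On the other hand, by Section \ref{sec:generators} the classes $[\tau_{12}]$ and $[\tau_{13}]$ are two distinct nonzero elements of $\NS(\tilde{S})_{\text{tor}}\simeq(\Z/2\Z)^2$, so $[\tau_{13}]\notin\{0,[\tau_{12}]\}$, yielding the desired contradiction.

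The main delicacy of the argument is the exact computation of $\tilde{\sigma}^*[D_1]$ in $\NS(\tilde{S})$: modulo $\Num(\tilde{S})$ one trivially has $\tilde{\sigma}^*[D_1]=[D_1]$, but the precise torsion correction $[\tau_{12}]$ is what pins down the direction in $\NS(\tilde{S})_{\text{tor}}$ occupied by the image of $1-\tilde{\sigma}^*$, and ensures that this direction is transverse to $[\tau_{13}]$.
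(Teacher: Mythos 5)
Your argument is correct and follows the same strategy as the paper's: apply $c_1$ and show that the torsion class $[\tau_{13}]$ does not lie in $(1-\tilde\sigma^*)H^2(\tilde{S},\Z)$. The only real difference is in the computation of that image, and there your version is the more accurate one. The paper's displayed formula asserts $(1-\tilde\sigma^*)c_1(L)=0$ for every $L\in\Pic(\tilde{S})$, which implicitly treats both free generators of $H^2(\tilde{S},\Z)$ as $\tilde\sigma^*$-invariant. As you observe, the ``half-fiber'' generator has to be taken to be the class of a reduced multiple fiber $[D_1]$ (no $\tilde\sigma^*$-invariant lift exists, since the four classes squaring to the full fiber are exactly $[D_1],\dots,[D_4]$ and $\tilde\sigma$ permutes them without fixed points), and then $(1-\tilde\sigma^*)[D_1]=[D_1]-[D_2]=[\tau_{12}]$. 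So the image of $1-\tilde\sigma^*$ on $H^2(\tilde{S},\Z)$ is $\{0,[\tau_{12}]\}$, not $\{0\}$; indeed $\tau_{12}\simeq\sO_{\tilde{S}}(D_1)\otimes\tilde\sigma^*\sO_{\tilde{S}}(-D_1)$ is visibly in $\operatorname{Im}(1-\tilde\sigma^*)$ and has nonzero first Chern class, which confirms your computation against the paper's. Since Section \ref{sec:generators} shows that $[\tau_{12}]$, $[\tau_{13}]$, $[\tau_{14}]$ are the three distinct nonzero elements of $H^2(\tilde{S},\Z)_{\text{tor}}\simeq\Z/2\Z\times\Z/2\Z$, the conclusion $[\tau_{13}]\notin\operatorname{Im}(1-\tilde\sigma^*)$ survives either way, but your proof records the correct image and makes explicit why the relabeling fixed before \eqref{eq:tau13} (so that the class considered is $[\tau_{13}]$ rather than $[\tau_{12}]$) is genuinely needed.
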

Before going forward with the proof, let us notice how, as an easy corollary, we get
\begin{cor}\label{cor:brbitype2}
If $S$ is of type 2, then the induced map $\pi_{\mathrm{Br}}:\mathrm{Br}(S)\rightarrow\mathrm{Br}(\tilde{S})$ is trivial. 
\end{cor}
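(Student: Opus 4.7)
The plan is to combine Beauville's description of the kernel of the Brauer map (Proposition \ref{prop:Beauville}) with Lemma \ref{lem:classtau13} in order to exhibit a nonzero element of $\operatorname{Ker}(\tilde{\pi}_{\operatorname{Br}})$, and then to use that the Brauer group of a type 2 bielliptic surface has order two to conclude that this kernel must exhaust the full Brauer group.

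More concretely, first I would set $L := \tilde{\pi}^*\beta \otimes \tau_{13}$, where $\beta \in \Pic^0(S)$ is the line bundle produced by the $\Pic^0$ trick of Remark \ref{rmk:trick} starting from the computation \eqref{eq:tau13}. By the very construction of $\beta$ we have $\operatorname{Nm}(L) \simeq \sO_S$, so $L$ lies in $\operatorname{Ker}(\operatorname{Nm}) \subset \Pic(\tilde{S})$. Lemma \ref{lem:classtau13} then asserts that $L$ does not belong to $(1-\tilde{\sigma}^*)\Pic(\tilde{S})$. Applying Proposition \ref{prop:Beauville}, the class of $L$ in the quotient $\operatorname{Ker}(\operatorname{Nm})/(1-\tilde{\sigma}^*)\Pic(\tilde{S})$ therefore yields a nonzero element of $\operatorname{Ker}(\tilde{\pi}_{\operatorname{Br}}) \subset \operatorname{Br}(S)$.

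To conclude, I would invoke Proposition \ref{prop:torsionSerrano} (equivalently, the fourth column of Table \ref{table:ta}) together with the fact, recalled earlier in the excerpt, that $\operatorname{Br}(S) \simeq H^2(S,\Z)_{\text{tor}}$ for a bielliptic surface. For $S$ of type 2 this gives $\operatorname{Br}(S) \simeq \Z/2\Z$. A nontrivial subgroup of a group of order two is necessarily the whole group, hence $\operatorname{Ker}(\tilde{\pi}_{\operatorname{Br}}) = \operatorname{Br}(S)$ and the Brauer map is the zero homomorphism. There is no real obstacle at this stage: all the geometric content has been bundled into Lemma \ref{lem:classtau13} and into the norm computation \eqref{eq:tau13}, so the corollary reduces to an order-counting argument once a single nonzero class in the kernel is in hand.
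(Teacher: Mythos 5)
Your proof is correct and is exactly the argument the paper intends: the class of $\tilde{\pi}^*\beta\otimes\tau_{13}$ gives a nonzero element of $\operatorname{Ker}(\tilde{\pi}_{\operatorname{Br}})$ via Proposition \ref{prop:Beauville} and Lemma \ref{lem:classtau13}, and since $\operatorname{Br}(S)\simeq\Z/2\Z$ for a type 2 surface the kernel must be everything. No differences worth noting.
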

\begin{proof}[Proof of Lemma \ref{lem:classtau13}]
We will show that the class of  $\tau_{13}$ in $H^2(\tilde{S},\mathbb{Z})$ is not in the image of $1-\tilde\sigma^*$. Denote by $[\tau_{ij}]$ the algebraic equivalence class of the line bundle $\tau_{ij}$. Then, by Proposition \ref{prop:torsionSerrano} and \eqref{eq:torsiontype1}, for every $L$ in $\Pic(\tilde{S})$ there are integers $n,$ $m$, and $h$, and $k$ such that
$$
c_1(L)=\frac{n}{2}\cdot a+m\cdot b+h\cdot[\tau_{13}]+k\cdot[\tau_{14}]
$$
Then it is easy to see that 
$$
(1-\tilde\sigma^*)c_1(L)=2h\cdot[\tau_{13}]+2k\cdot[\tau_{14}]=0.
$$
 But on the other side we have that $c_1(\tilde{\pi^*}\beta\otimes\tau_{13})=[\tau_{13}]$ is not trivial, thus $\tilde{\pi^*}\beta\otimes\tau_{13}$ cannot possibly lie in the image of $(1-\tilde\sigma^*)$, and the lemma is proved.
\end{proof}

\subsection{Bielliptic surface of type 3} In this paragraph we aim to show the following statement
\begin{thm}\label{thm:brbi}
If $S$ is a bielliptic surface of type 3, then the Brauer map $\tilde{\pi}_{\operatorname{Br}}:\operatorname{Br}(S)\rightarrow\operatorname{Br}(\tilde{S})$ induced by the cover $\tilde{\pi}:\tilde{S}\rightarrow S$, where $\tilde{S}$ is bielliptic of type 1, is injective.
\end{thm}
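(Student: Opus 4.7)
The plan is to apply Proposition \ref{prop:Beauville} and show $\operatorname{Ker}\operatorname{Nm}\subseteq(1-\tilde\sigma^*)\Pic(\tilde S)$. I would proceed in four stages: reduce to $\Pic^\tau(\tilde S)$, narrow down the possible torsion class, handle the non-trivial torsion case explicitly, and finally control the $\Pic^0$-part of the kernel.

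For the first reduction, $\operatorname{Num}(\tilde S)$ has rank two generated by the two fiber classes, which are both $\tilde\sigma^*$-invariant; passing to first Chern classes in \eqref{eq:pullbackofnorm} therefore gives $c_1(\tilde\pi^*\operatorname{Nm}(L))=2\,c_1(L)$ in the torsion-free group $\operatorname{Num}(\tilde S)$, so $\operatorname{Nm}(L)=\sO_S$ forces $L\in\Pic^\tau(\tilde S)$.

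For the second step, by Example \ref{ex:type3->type1} the involution $\tilde\sigma$ lifts to $(x,y)\mapsto(x+\epsilon,\omega(y))$, and since $\omega$ has order $4$ on $B$ it acts on the four $2$-torsion points of $B$ as an involution whose fixed locus has order $2$; consequently two of the multiple fibers, say $D_1$ and $D_2$, are $\tilde\sigma^*$-invariant while $\tilde\sigma^*D_3\sim D_4$ for the remaining pair. Applying \eqref{eq:pullbackofnorm} yields $\tilde\pi^*\operatorname{Nm}(\tau_{12})\simeq\tau_{12}^{\otimes 2}\simeq\sO_{\tilde S}$, so $\operatorname{Nm}(\tau_{12})\in\Pic^0(S)$, whereas $\tilde\pi^*\operatorname{Nm}(\tau_{13})\simeq\tau_{13}\otimes\tau_{14}$ (and similarly for $\tau_{14}$) has non-trivial class $[\tau_{12}]$ in $H^2(\tilde S,\Z)_{\mathrm{tors}}$; this forces $c_1(\operatorname{Nm}(\tau_{13}))\neq 0$ in $H^2(S,\Z)$, hence $\operatorname{Nm}(\tau_{13})\notin\Pic^0(S)$. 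Writing $L\in\operatorname{Ker}\operatorname{Nm}$ as $\alpha\otimes\tau$ with $\alpha\in\Pic^0(\tilde S)$ and $\tau\in\{\sO_{\tilde S},\tau_{12},\tau_{13},\tau_{14}\}$, this forces $\tau\in\{\sO_{\tilde S},\tau_{12}\}$.

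For the $\tau=\tau_{12}$ case I use the identity
$$
(1-\tilde\sigma^*)\tau_{13}\;\simeq\;\tau_{13}\otimes\tau_{14}^{-1}\;\simeq\;\sO_{\tilde S}(D_4-D_3),
$$
which has class $[\tau_{12}]$ and lies in $\operatorname{Ker}\operatorname{Nm}$ by \eqref{eq:1-sigma}; dividing, every $L=\alpha\otimes\tau_{12}\in\operatorname{Ker}\operatorname{Nm}$ differs from this explicit element by a factor in $\Pic^0(\tilde S)\cap\operatorname{Ker}\operatorname{Nm}$. So the whole statement reduces to showing $\operatorname{Ker}\operatorname{Nm}|_{\Pic^0(\tilde S)}\subseteq(1-\tilde\sigma^*)\Pic(\tilde S)$, which I expect to be the main delicate point. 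Here $\tilde\sigma$ descends to translation of $\operatorname{Alb}(\tilde S)\simeq A/\langle 2\epsilon\rangle$ by the order-$2$ class of $\epsilon$, so $\tilde\sigma^*$ is trivial on $\Pic^0(\tilde S)$; combining with \eqref{eq:normofpullback} this shows $\operatorname{Nm}\colon\Pic^0(\tilde S)\to\Pic^0(S)$ is a degree-$2$ isogeny of elliptic curves whose kernel is cyclic of order $2$. To exhibit a generator inside $(1-\tilde\sigma^*)\Pic(\tilde S)$, I would apply $1-\tilde\sigma^*$ to a fiber $F$ of the Albanese fibration $a_{\tilde S}\colon\tilde S\to\operatorname{Alb}(\tilde S)$ and obtain $(1-\tilde\sigma^*)\sO_{\tilde S}(F)\simeq a_{\tilde S}^{*}P_{\epsilon}$, a non-trivial $2$-torsion element of $\Pic^0(\tilde S)$ that automatically lies in $\operatorname{Ker}\operatorname{Nm}$ by \eqref{eq:1-sigma}; it therefore generates, and the theorem follows.
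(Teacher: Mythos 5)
Your proposal is correct and follows essentially the same strategy as the paper's proof: Beauville's criterion via Proposition \ref{prop:Beauville}, numerical triviality of elements of $\operatorname{Ker}\operatorname{Nm}$, the computation that $\operatorname{Nm}(\tau_{13})$ and $\operatorname{Nm}(\tau_{14})$ are not algebraically trivial while the $\tilde\sigma$-invariant torsion class is killed (Lemma \ref{lem:tauij}), the observation that the remaining torsion class is represented by $(1-\tilde\sigma^*)\tau_{13}\simeq\sO_{\tilde{S}}(D_4-D_3)$ (Remark \ref{rmk:tau13}(a)), and an explicit nontrivial $2$-torsion element $a_{\tilde{S}}^*P_{\overline{\epsilon}}\in\operatorname{Im}(1-\tilde\sigma^*)$ generating the order-two group $\Pic^0(\tilde{S})\cap\operatorname{Ker}\operatorname{Nm}$ (Lemma \ref{lem:bipic0}). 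The only real variation is your first reduction, where numerical triviality of $L\in\operatorname{Ker}\operatorname{Nm}$ is deduced directly from $\tilde\sigma^*$ acting as the identity on the torsion-free group $\operatorname{Num}(\tilde{S})$, a slightly more direct route than the index-two-sublattice argument of Proposition \ref{prop:binumtri}.
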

We will use \ref{prop:Beauville} and show that $\operatorname{Ker}(\operatorname{Nm})/\operatorname{Im}(1-\sigma^*)$ is trivial. There are two main key steps:
\begin{enumerate}
\item We first study the norm map when applied to numerically trivial line bundles; 
    \item then we prove that all the line bundles $L$ in $\operatorname{Ker}(\operatorname{Nm})$ are numerically trivial.
    
\end{enumerate}
\subsubsection{Norm of numerically trivial line bundles}
We will use the notation of Example \ref{ex:type3->type1}. Observe that we have the following diagram
\begin{equation}\label{eq:diagram}
\xymatrix{
\tilde{S}\ar[d]_{a_{\tilde{S}}}\ar[rr]^{\tilde{\pi}}&& S\ar[d]^{a_{{S}}}\\
A/G\ar[rr]_\varphi&&A/H
},\end{equation}
Where $G\simeq\mathbb{Z}/2\mathbb{Z}$,  and $H$ is $\mathbb{Z}/4\mathbb{Z}$. 
\begin{rmk}\label{rmk:isogeny}
Note that the bottom arrow, $\varphi$, is an isogeny of degree 2. As the vertical arrows are the Albanese maps of $\tilde{S}$ and $S$ respectively, we have that $\tilde{\pi}^*:\operatorname{Pic}^0(S)\rightarrow\operatorname{Pic}^0(\tilde{S})$ coincides with the isogeny dual to $\varphi$. In particular it is surjective.
\end{rmk}
Our first step in the study of the norm homomorphism for numerically trivial line bundles is to see how it behaves when applied to the generator of the torsion of $H^2(\tilde{S},\Z)$. In order to do that, we remark that the automorphism $\omega$ acts on $B[2]$ with at least one fixed point, the one corresponding to the identity element of $B$. Since $\omega$ has order 4, it cannot act transitively on the remaining three points on $B[2]$. Thus the action has at least two fixed points. We deduce that $\tilde{\sigma}$ acts on the set of the reduced multiple fibers by leaving fixed at least two of them, let us say $D_1$ and $D_2$. If the action were trivial, then we would have that all the line bundles $\tau_{ij}$ are invariant under the action of $\tilde{\sigma}$ and as a consequence they would be pullbacks of line bundles coming from $S$. We would deduce that all the torsion classes of $H^2(\tilde{S},\Z)$ are pullbacks of classes from $H^2(S,\Z)$, which is impossible. Thus we know that $\tilde{\sigma}$ exchanges $D_3$ and $D_4$. Then we can prove the following Lemma.

\begin{lemma}\label{lem:tauij} Let $n$ and $m$ be two integers. Then the norm of the line bundle $\tau_{13}^{\otimes n}\otimes\tau_{14}^{\otimes m}$ is zero if and only if $n$ and $m$ have the same parity. In addition we have that $\operatorname{Nm}(\tau_{13}^{\otimes n}\otimes\tau_{14}^{\otimes m})$ is not in $\Pic^0(S)$ if $n$ and $m$ are not congruent modulo 2.
\end{lemma}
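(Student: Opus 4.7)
The plan is to compute $\tilde\pi^*\operatorname{Nm}(\tau_{13}^{\otimes n}\otimes\tau_{14}^{\otimes m})$ explicitly via formula \eqref{eq:pullbackofnorm}, and then deduce the statement about $\operatorname{Nm}(\tau_{13}^{\otimes n}\otimes\tau_{14}^{\otimes m})$ by transferring the information back to $\Pic(S)$.

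First I would pin down how $\tilde\sigma^*$ acts on $\tau_{13}$ and $\tau_{14}$. By the discussion immediately preceding the statement, $\tilde\sigma$ fixes $D_1$ and $D_2$ as reduced irreducible divisors and swaps $D_3$ with $D_4$, so $\tilde\sigma^*\tau_{13}\simeq \tau_{14}$ and $\tilde\sigma^*\tau_{14}\simeq \tau_{13}$. Plugging this into \eqref{eq:pullbackofnorm} for the degree-two \'etale cover $\tilde\pi$ gives
\[\tilde\pi^*\operatorname{Nm}\bigl(\tau_{13}^{\otimes n}\otimes\tau_{14}^{\otimes m}\bigr)\simeq (\tau_{13}\otimes \tau_{14})^{\otimes(n+m)}.\]
Using the canonical bundle formula \eqref{eq:canbundletype1} in the form $K_{\tilde S}\sim D_1+D_2-D_3-D_4$, one rewrites $\tau_{13}\otimes \tau_{14}\simeq \tau_{12}\otimes \omega_{\tilde S}$. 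Since $2D_1\sim 2D_2$ (both are linearly equivalent to a smooth fiber of $g$) one has $\tau_{12}^{\otimes 2}\simeq \sO_{\tilde S}$, and $\omega_{\tilde S}^{\otimes 2}\simeq \sO_{\tilde S}$ because $\tilde S$ is of type $1$. Hence
\[\tilde\pi^*\operatorname{Nm}\bigl(\tau_{13}^{\otimes n}\otimes\tau_{14}^{\otimes m}\bigr)\simeq \begin{cases}\sO_{\tilde S}&\text{if }n+m\text{ is even,}\\ \tau_{12}\otimes \omega_{\tilde S}&\text{if }n+m\text{ is odd.}\end{cases}\]

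To conclude, I would treat the two cases separately. When $n+m$ is even, observe that the kernel of $\tilde\pi^*\colon \Pic(S)\to \Pic(\tilde S)$ is generated by the 2-torsion line bundle $\omega_S^{\otimes 2}$, which lies in $\Pic^0(S)$; this forces $\operatorname{Nm}(\tau_{13}^{\otimes n}\otimes \tau_{14}^{\otimes m})\in \{\sO_S,\omega_S^{\otimes 2}\}\subset \Pic^0(S)$, so the associated class in $H^2(S,\Z)$ vanishes. When $n+m$ is odd, since $\omega_{\tilde S}\in\Pic^0(\tilde S)$, the Chern class of $\tilde\pi^*\operatorname{Nm}(\tau_{13}^{\otimes n}\otimes \tau_{14}^{\otimes m})$ in $H^2(\tilde S,\Z)$ equals $[\tau_{12}]$, a nonzero 2-torsion element by Proposition \ref{prop:generators}; since $\tilde\pi^*c_1(\operatorname{Nm}(\tau_{13}^{\otimes n}\otimes \tau_{14}^{\otimes m}))$ equals this nonzero class, we deduce that $c_1(\operatorname{Nm}(\tau_{13}^{\otimes n}\otimes \tau_{14}^{\otimes m}))\neq 0$ in $H^2(S,\Z)$, and in particular $\operatorname{Nm}(\tau_{13}^{\otimes n}\otimes \tau_{14}^{\otimes m})\notin \Pic^0(S)$.

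The main point requiring care is verifying the action of $\tilde\sigma$ on the $D_i$: once we know that $\tilde\sigma$ fixes $D_1,D_2$ and swaps $D_3,D_4$ as divisors (and not merely up to algebraic equivalence), the remainder is a short chain of formal manipulations with the canonical bundle formula and the orders of the relevant torsion classes.
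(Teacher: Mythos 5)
Your computation of $\tilde\pi^*\operatorname{Nm}(\tau_{13}^{\otimes n}\otimes\tau_{14}^{\otimes m})\simeq(\tau_{13}\otimes\tau_{14})^{\otimes(n+m)}$ and your treatment of the case $n+m$ odd are correct and follow essentially the same route as the paper (the action of $\tilde\sigma$ on the $D_i$ plus \eqref{eq:pullbackofnorm} and the canonical bundle formula). The second assertion of the lemma is fully established: $c_1$ of the pullback equals the nonzero torsion class $[\tau_{12}]$, so the norm cannot lie in $\Pic^0(S)$.

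There is, however, a gap in your first case. The lemma asserts that for $n\equiv m\pmod 2$ the norm \emph{is} the trivial line bundle $\sO_S$, and this is exactly how it is used later (in deducing Theorem \ref{thm:brbi} from Proposition \ref{prop:binumtri}, one needs $\operatorname{Nm}(\tau_{13}^{\otimes n}\otimes\tau_{14}^{\otimes m})\simeq\sO_S$ in order to conclude that the $\Pic^0$-part $\alpha$ of $L$ lies in $\operatorname{Ker}(\operatorname{Nm})$, so that Lemma \ref{lem:bipic0} applies). Your argument only pins the norm down modulo $\operatorname{Ker}\tilde\pi^*=\{\sO_S,\omega_S^{\otimes 2}\}$, i.e.\ you prove the weaker statement that the norm is numerically (indeed algebraically) trivial; the pullback computation alone cannot distinguish $\sO_S$ from $\omega_S^{\otimes 2}$, and no choice of normalization removes this ambiguity since $\omega_S^{\otimes 4}\simeq\sO_S$. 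The fix is short: for $n,m$ both even the bundle itself is trivial, and for $n,m$ both odd one has $\tau_{13}\otimes\tau_{14}\simeq\sO_{\tilde S}(2D_1-D_3-D_4)\simeq\sO_{\tilde S}(D_3-D_4)$ (using $2D_1\sim 2D_3$), which equals $\sO_{\tilde S}(D_3)\otimes\tilde\sigma^*\sO_{\tilde S}(-D_3)\in\operatorname{Im}(1-\tilde\sigma^*)$ because $\tilde\sigma^*D_3=D_4$; hence its norm is trivial by \eqref{eq:1-sigma} (equivalently, $\tilde\pi_*D_3=\tilde\pi_*D_4$, so the pushforward divisor is zero). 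This is in effect what the paper does, by descending $\tau_{34}$ to a bundle $\tau$ on $S$ and showing $\operatorname{Nm}(\tau_{34})\simeq\tau^{\otimes 2}\simeq\sO_S$.
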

\begin{proof}
Observe first of all that, thanks to the above discussion, the line bundle $\tau_{34}\simeq\tau_{13}\otimes\tau_{14}$ is invariant with respect to the action of $\tilde{\sigma}$. In particular we can write $\tau_{34}\simeq\tilde{\pi}^*\tau$ where $\tau$ is a line bundle on $S$ whose algebraic equivalence class is the only nontrivial class in $H^2(S,\Z)$.\par
Now, if $n$ and $m$ are both even, then $\tau_{13}^{\otimes n}\otimes\tau_{14}^{\otimes m}$ is the trivial line bundle, and there is nothing to prove. Otherwise, if $n$ and $m$ are odd, then
$$
\operatorname{Nm}(\tau_{13}^{\otimes n}\otimes\tau_{14}^{\otimes m})\simeq \operatorname{Nm}(\tau_{34})\simeq\tau^{\otimes 2}\simeq \sO_S.
$$
Conversely suppose that $n$ and $m$ are not congruent modulo 2. Up to exchanging $n$ and $m$
 we can assume that $m$ is even, while $n$ is odd. Then $\tau_{13}^{\otimes n}\otimes\tau_{14}^{\otimes m}\simeq\tau_{13}$. Again by \eqref{eq:pullbackofnorm} we get
 $$\tilde{\pi}^*\operatorname{Nm}(\tau_{13})\simeq\tau_{13}\otimes\tilde{\sigma}^*\tau_{13}\simeq\tau_{34}\simeq \tilde{\pi}^*\tau.
 $$
 We deduce that $\operatorname{Nm}(\tau_{13})$ is either equal to $\tau$ or to $\tau\otimes\omega_S^{\otimes 2}$. In any case it is not algebraically equivalent to zero and so the statement is proven.
 \end{proof}
\begin{rmk}\label{rmk:tau13} (a) Observe that $\tau_{34}$ is in the image of $1-\tilde{\sigma}^*$, as we have that $\tau_{34}\simeq\sO_{\tilde{S}}(D_3)\otimes\tilde{\sigma}^*\sO_{\tilde{S}}(-D_3)$.\par
(b) We will see in what follows that the different behavior of the norm map applied to torsion classes is what determines the contrast between the type 2 and type 3 bielliptic surfaces. In particular, the fact that the norm map of a torsion class is not necessarily algebraically trivial is what does not allow us to use Remark \ref{rmk:trick} in order to provide a non trivial class in $\operatorname{Ker}(\operatorname{Nm})/\operatorname{Im}(1-\tilde{\sigma}^*)$
\end{rmk}
Now we turn our attention to the elements of $\Pic^0(\tilde{S})$ whose norm is trivial. We will show that they never determine nonzero classes in $\operatorname{Ker}(\operatorname{Nm})/\operatorname{Im}(1-\tilde{\sigma}^*)$.
\begin{lemma}\label{lem:bipic0}Denote by $\operatorname{Nm}:\operatorname{Pic}(\tilde{S})\rightarrow
\operatorname{Pic}(S)$ the norm homomorphism.
Let $L\in\operatorname{Pic}^0(\tilde{S})$, such that $\operatorname{Nm}(L)=\sO_S$. Then the class of $L$ in $H^1(\mathbb{Z}/2\mathbb{Z}, \operatorname{Pic}(\tilde{S}))$ is trivial.
\end{lemma}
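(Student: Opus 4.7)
The plan is to establish that $\Ker(\operatorname{Nm})\cap\Pic^0(\tilde{S})$ has exactly two elements and to exhibit the nontrivial one as $(1-\tilde\sigma^*)M$ for an explicit choice of $M$.

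The crucial structural input is that $\tilde{\sigma}^*$ acts trivially on $\Pic^0(\tilde{S})$. Indeed, by Example \ref{ex:type3->type1} the involution $\tilde{\sigma}([x,y])=[x+\epsilon,\omega(y)]$ descends on the Albanese $A/G$ to the translation by $\bar\epsilon$, and translations act trivially on the Picard variety of an abelian variety. Applying \eqref{eq:pullbackofnorm} to $L\in\Pic^0(\tilde{S})$ then gives $\tilde\pi^*\operatorname{Nm}(L)=L\otimes\tilde\sigma^*L=L^{\otimes 2}$, so the hypothesis $\operatorname{Nm}(L)=\sO_S$ forces $L^{\otimes 2}=\sO_{\tilde{S}}$, i.e., $L\in\Pic^0(\tilde S)[2]$.

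By Remark \ref{rmk:isogeny}, $\tilde{\pi}^*\colon\Pic^0(S)\to\Pic^0(\tilde S)$ is a surjective isogeny of degree $2$, so its kernel has order $2$; since $\tilde\pi^*\omega_S\simeq\omega_{\tilde{S}}$ has order $2$ in $\Pic(\tilde S)$ while $\omega_S$ has order $4$ in $\Pic^0(S)$, the element $\omega_S^{\otimes 2}$ lies in the kernel, and therefore $\Ker(\tilde\pi^*)=\{\sO_S,\omega_S^{\otimes 2}\}\subset\Pic^0(S)[2]$. Combined with the identity $\operatorname{Nm}(\tilde\pi^*\alpha)\simeq\alpha^{\otimes 2}$ from \eqref{eq:normofpullback}, this yields $\Ker(\operatorname{Nm})\cap\Pic^0(\tilde S)=\tilde\pi^*(\Pic^0(S)[2])$, a group of order $4/2=2$.

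To finish, I would produce a nontrivial element of $\operatorname{Im}(1-\tilde\sigma^*)\cap\Pic^0(\tilde S)$: pick any point $p\in A/G$, set $M:=a_{\tilde S}^*\sO_{A/G}(p)$, and compute
\[
M\otimes(\tilde\sigma^*M)^{-1}\simeq a_{\tilde S}^*\bigl(\sO_{A/G}(p)\otimes\sO_{A/G}(p-\bar\epsilon)^{-1}\bigr)\simeq a_{\tilde S}^*P_{\bar\epsilon},
\]
which is nontrivial because $\bar\epsilon\neq 0$ in $A/G$ (the point $\epsilon$ has order $4$ in $A$) and $a_{\tilde S}^*$ is an isomorphism on $\Pic^0$. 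By \eqref{eq:1-sigma} this class lies in $\Ker(\operatorname{Nm})$, so it must coincide with the unique nontrivial element of the two-element group $\Ker(\operatorname{Nm})\cap\Pic^0(\tilde S)$; hence any $L$ as in the statement is either trivial or equal to $(1-\tilde\sigma^*)M$, and in either case its class in $H^1(\mathbb{Z}/2\mathbb{Z},\Pic(\tilde S))$ vanishes. The only genuine delicacy is the triviality of the $\tilde\sigma^*$-action on $\Pic^0(\tilde S)$; once that is secured, everything else reduces to finite counting inside $\Pic^0(\tilde S)[2]$.
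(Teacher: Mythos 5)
Your proof is correct and takes essentially the same route as the paper: both reduce to showing that $\Ker(\operatorname{Nm})\cap\Pic^0(\tilde S)=\tilde\pi^*\left(\Pic^0(S)[2]\right)$ has order two (via the surjectivity of $\tilde\pi^*$ on $\Pic^0$ and $\operatorname{Nm}\circ\tilde\pi^*=(-)^{\otimes 2}$) and then exhibit the same nontrivial element $a_{\tilde S}^*P_{\bar\epsilon}\simeq(1-\tilde\sigma^*)\,a_{\tilde S}^*\sO_{A/G}(p)$ of $\operatorname{Im}(1-\tilde\sigma^*)\cap\Pic^0(\tilde S)$, concluding via \eqref{eq:1-sigma}. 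The only difference is your (correct but superfluous) opening paragraph on the triviality of the $\tilde\sigma^*$-action on $\Pic^0(\tilde S)$, which the counting argument in your second paragraph makes unnecessary.
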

\begin{proof}
We have to show that such $L$ is in the image of the morphism $1-\tilde{\sigma}^*$.  By Remark \ref{rmk:isogeny}, we can write $L\simeq\tilde{\pi}^*M$ with $M\in\operatorname{Pic}^0(S)$. Then our assumption warrants that
$$\sO_S\simeq \operatorname{Nm}(L)\simeq M^{\otimes 2}.$$
We deduce that $M$ is a 2-torsion point in $\operatorname{Pic}^0(S)$. Now we know that $\operatorname{Pic}^0(S)[2]$ is a group scheme isomorphic to $\mathbb{Z}/2\mathbb{Z}\times\mathbb{Z}/2\mathbb{Z}$. Let $\gamma$ be the element $\omega_S^{\otimes2}\in\operatorname{Pic}^0(S)[2]$  then we can find $\beta\in \operatorname{Pic}^0(S)[2]$, $\beta$ nontrivial, such that
$$\operatorname{Pic}^0(S)[2]=\{\sO_S,\gamma,\beta,\gamma\otimes\beta\}.
$$
In particular, as $\tilde{\pi}^*\gamma\simeq\sO_{\tilde{S}}$,
\begin{equation}\label{eq:kernmbielliptic}
\operatorname{Ker}(\operatorname{Nm})\cap\operatorname{Pic}^0(\tilde{S})=\{\sO_{\tilde{S}},\tilde{\pi}^*\beta\}.
\end{equation}
 Now we aim at producing a line bundle $\alpha\in \operatorname{Pic}^0(\tilde{S})\cap\operatorname{Im}(1-\tilde\sigma^*)$, $\alpha\not\simeq \sO_{\tilde{S}}$. Thus we will have that $\operatorname{Pic}^0(\tilde{S})\cap\operatorname{Im}(1-\tilde\sigma^*)$ is a nontrivial subgroup of $\operatorname{Ker}(\operatorname{Nm})\cap\operatorname{Pic}^0(\tilde{S})$.  From \eqref{eq:kernmbielliptic} we deduce that
 $$
 \operatorname{Ker}(\operatorname{Nm})\cap\operatorname{Pic}^0(\tilde{S})=\operatorname{Pic}^0(\tilde{S})\cap\operatorname{Im}(1-\tilde\sigma^*)
 $$
 and so the statement.\par
 To this aim let $\overline{\epsilon}\in A':=A/G$ the image of the point $\epsilon\in A$ defining the involution $\tilde{\sigma}$ (see \eqref{eq:explicitsigma3}). Denote also by $p_0$ the identity element of $A'$; observe that by the construction of bielliptic surfaces $\overline{\epsilon}\neq p_0$. Consider the following line bundle on $\tilde{S}$: $$\alpha:=a_{\tilde{S}}^*(\sO_{A'}(p_0)\otimes t^*_{\overline{\epsilon}}\sO_{A'}(-p_0)).$$
 Clearly $\alpha$ is a nontrivial element in $\operatorname{Pic}^0(\tilde{S})$. In addition by \eqref{eq:explicitsigma3}  we see that 
 $$\alpha\simeq a_{\tilde{S}}^*(\sO_{A'}(p_0))\otimes \tilde\sigma^*a_{\tilde{S}}^*(\sO_{A'}(-p_0)), $$
 and therefore it is in the image of $1-\tilde\sigma^*$. Thus we can conclude.
\end{proof}
\subsubsection{Injectivity of the Brauer map} We are now ready to prove Theorem \ref{thm:brbi}. We will do so by showing the following statement.
\begin{prop}\label{prop:binumtri}
If $L\in\operatorname{Ker}(\operatorname{Nm})$ then $L$ is numerically trivial.
\end{prop}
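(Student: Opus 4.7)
The plan is to apply formula \eqref{eq:pullbackofnorm} to the degree two cover $\tilde\pi\colon\tilde S\to S$, which gives
\[
\tilde\pi^*\operatorname{Nm}(L)\simeq L\otimes\tilde\sigma^*L.
\]
If $L\in\operatorname{Ker}(\operatorname{Nm})$, then the right-hand side is trivial, so taking numerical classes yields the identity $[L]+\tilde\sigma^*[L]=0$ in $\operatorname{Num}(\tilde S)$. My strategy is then to show that $\tilde\sigma^*$ acts as the identity on $\operatorname{Num}(\tilde S)$. Once this is established, the equation above reads $2[L]=0$ in the torsion-free $\Z$-module $\operatorname{Num}(\tilde S)=\Z\tilde a_0\oplus\Z\tilde b_0$, forcing $[L]=0$ and hence the numerical triviality of $L$.

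The triviality of the action of $\tilde\sigma^*$ on $\operatorname{Num}(\tilde S)$ is the heart of the argument, and I would deduce it as follows. The generators $\tilde a_0=\tfrac{1}{2}\tilde a$ and $\tilde b_0=\tilde b$ are (up to positive rational multiples) the numerical classes of the fibers of the two elliptic fibrations $a_{\tilde S}\colon\tilde S\to A/G$ and $g\colon\tilde S\to\mathbb{P}^1$ of the type $1$ surface $\tilde S$. By the explicit formula \eqref{eq:explicitsigma3}, the involution $\tilde\sigma$ acts on the first factor by the translation $x\mapsto x+\epsilon$ and on the second factor by $y\mapsto\omega(y)$. Since these operations are compatible (respectively) with the quotients $A\to A/G$ and $B\to B/G\simeq\mathbb{P}^1$, $\tilde\sigma$ descends to automorphisms of the bases of both fibrations and therefore sends fibers of $a_{\tilde S}$ to fibers of $a_{\tilde S}$, and fibers of $g$ to fibers of $g$. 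Because all fibers of a proper morphism with connected base are numerically equivalent, $\tilde\sigma^*$ must fix both $\tilde a$ and $\tilde b$, and hence it acts as the identity on $\operatorname{Num}(\tilde S)$.

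I do not expect any serious obstacle here: the whole argument is essentially formal, combining the pullback formula for the norm, the hypothesis $\operatorname{Nm}(L)=\sO_S$, and the evident compatibility of $\tilde\sigma$ with the two elliptic fibrations of $\tilde S$. Once Proposition \ref{prop:binumtri} is proved, any $L\in\operatorname{Ker}(\operatorname{Nm})$ lies in $\Pic^\tau(\tilde S)$, and combining Lemma \ref{lem:tauij} together with Remark \ref{rmk:tau13}(a) (to dispose of the torsion part of $c_1(L)$, which, by the parity constraint on the norm, must be a multiple of $[\tau_{34}]$) with Lemma \ref{lem:bipic0} (to handle the resulting element of $\Pic^0(\tilde S)\cap\operatorname{Ker}(\operatorname{Nm})$) gives $\operatorname{Ker}(\operatorname{Nm})\subseteq(1-\tilde\sigma^*)\Pic(\tilde S)$, from which Theorem \ref{thm:brbi} follows via Beauville's Proposition \ref{prop:Beauville}.
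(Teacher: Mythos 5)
Your argument is correct, and it takes a genuinely different (and more direct) route than the paper's. The paper proves Proposition \ref{prop:binumtri} by observing that $\tilde\pi^*\operatorname{Num}(S)$ has index $2$ in $\operatorname{Num}(\tilde S)$, so that $L^{\otimes 2}\simeq\tilde\pi^*M\otimes\tau_{13}^{\otimes n}\otimes\tau_{14}^{\otimes m}$ up to $\Pic^0$, and then computes $\operatorname{Nm}(L^{\otimes 2})\simeq M^{\otimes 2}\otimes\tau^{\otimes(n+m)}$ via Lemma \ref{lem:tauij} to conclude that $M$, hence $L$, is numerically trivial. You instead apply $c_1$ to the relation $\mathcal{O}_{\tilde S}\simeq\tilde\pi^*\operatorname{Nm}(L)\simeq L\otimes\tilde\sigma^*L$ and use that $\tilde\sigma^*$ fixes $\tilde a$ and $\tilde b$ (because $\tilde\sigma$ covers automorphisms of the bases of both elliptic fibrations, so it permutes fibers within each fibration), hence acts trivially on the torsion-free lattice $\operatorname{Num}(\tilde S)$; the relation $2[L]=0$ then forces $[L]=0$. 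This is exactly the mechanism the paper itself deploys later in Lemma \ref{lem:potentialclasses} for the canonical cover, so your proof is in harmony with the rest of the article; it buys a shorter argument that bypasses both the index-two lattice computation and the evaluation of the norm on the torsion classes $\tau_{ij}$ (though the latter, Lemma \ref{lem:tauij}, is still needed afterwards to finish Theorem \ref{thm:brbi}, as your closing paragraph correctly indicates). The only point worth making explicit is that triviality of $\tilde\sigma^*$ on the $\mathbb{Q}$-span of $\tilde a,\tilde b$ implies triviality on the full lattice $\operatorname{Num}(\tilde S)$ precisely because the latter is torsion-free, which you do use.
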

Before proceeding with the proof, let us show how this implies Theorem \ref{thm:brbi}. Let $L$ be a line bundle in the kernel of the norm map. Then Proposition \ref{prop:binumtri} yields that $$L\simeq\alpha\otimes\tau_{13}^{\otimes n}\otimes\tau_{14}^{\otimes m}$$
for some positive integers $n$ and $m$, and for some $\alpha\in\operatorname{Pic}^0(\tilde{S}).$ Write again $\alpha\simeq\tilde{\pi}^*\beta$, and observe that if $n$ and $m$ are not congruent modulo 2, then by Lemma \ref{lem:tauij},we get 
$$
\operatorname{Nm}(L)\simeq\beta^{\otimes 2}\otimes \tau,
$$
which is not algebraically trivial. We deduce that $n$ and $m$ must have the same parity. Now we apply the first part of Lemma \ref{lem:tauij} and see that $\alpha\in \operatorname{Ker}(\operatorname{Nm}).$ In particular Lemma \ref{lem:bipic0} implies that $\alpha\in\operatorname{Im}(1-\tilde{\sigma}^*)$, and so the class of $L$ in $\operatorname{Ker}(\operatorname{Nm})/\operatorname{Im}(1-\tilde{\sigma}^*)$ is the same as the class of $\tau_{34}$. But Remark \ref{rmk:tau13}(a) tells us that the latter is trivial and so  Theorem \ref{thm:brbi} is proved.

\begin{proof}[Proof of Proposition \ref{prop:binumtri}]
Let $L$ in the kernel of the norm map. Lemmas \ref{lem:type3} and \ref{lem:type2} imply that $\tilde\pi^*\operatorname{Num}(S)$ is a sublattice of index 2 of $\operatorname{Num}(\tilde{S})$. In particular $L^{\otimes 2}$ is numerically equivalent to the pullback of a line bundle from $S$. Thus we can write
$$
L^{\otimes 2}\simeq\tilde{\pi}^*M\otimes\alpha\otimes\tau_{13}^{\otimes n}\otimes\tau_{14}^{\otimes m}$$
for some positive integers $n$ and $m$, and for some $\alpha\in\operatorname{Pic}^0(\tilde{S}).$ Again, by Remark \ref{rmk:isogeny}
 we can write $\alpha\simeq\tilde{\pi}^*\beta$ for some $\beta\in\operatorname{Pic}^0(S)$, and so, up to substituting $M$ with $M\otimes\beta$ we have that 
 $$
L^{\otimes 2}\simeq\tilde{\pi}^*M\otimes\tau_{13}^{\otimes n}\otimes\tau_{14}^{\otimes m}.$$
If we show that $M$ is numerically trivial we can conclude. Observe that
$$
\sO_S\simeq\operatorname{Nm}(L)\otimes\operatorname{Nm}(L)\simeq\operatorname{Nm}(L^{\otimes 2})\simeq M^{\otimes 2}\otimes\operatorname{Nm}(\tau_{13}^{\otimes n}\otimes\tau_{14}^{\otimes m})\simeq M^{\otimes 2}\otimes\tau^{\otimes (n+m)},
$$
where the last equality is a consequence of Lemma \ref{lem:tauij}. As $\tau$ is numerically trivial we conclude that the same is true for $M$.
\end{proof}
\section{The Brauer map to the canonical cover}\label{sec:abelian}
In this section we study the Brauer map $\pi_{\operatorname{Br}}:\operatorname{Br}(S)\rightarrow\operatorname{Br}(X)$ when $S$ is a bielliptic surface and $X$ is its canonical cover. Then there is an $n$ to 1 \'etale cyclic cover $\pi:X\rightarrow S$, where $n$ denotes the order of the canonical bundle $\omega_S$. Thus, as in the previous section, we can use Beauville's work \cite{Be2009} to study the kernel of the $\pi_{\operatorname{Br}}$ via the norm homomorphism $\operatorname{Nm}:\Pic(X)\rightarrow  \Pic(S)$. As in the other cases the Brauer group is trivial, we can assume that $S$ is of type 1, 2, 3, or 5. Recall that, independently from the case at hand,  there are two elliptic curves $A$ and $B$ such that $X$ is isogenous to $A\times B$. In what follows we will see that the geometry of the Brauer maps depends much on the geometry of $A\times B$, and in particular on whether there are isogenies between $A$ and $B$ or not. Throughout this section we will use the notation established in paragraph \ref{sub:cancover}.\par

\subsection{The norm of numerically trivial line bundles} Our first step will be proving the following proposition, which will allows us to study the norm map from a strictly numerical point of view. 
\begin{prop}\label{prop:brabpic0} Let $L\in\Pic^0(X)\cap\operatorname{Ker}(\operatorname{Nm})$. Then $L$ is in $\operatorname{Im}(1-\sigma^*)$.
\end{prop}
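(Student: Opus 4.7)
The plan is, given $L \in \Pic^0(X) \cap \Ker(\operatorname{Nm})$, to construct an explicit line bundle $M \in \Pic(X)$---generally with nontrivial first Chern class---such that $(1-\sigma^*)M = L$. Allowing $M\notin\Pic^0(X)$ is essential: the action of $\sigma^*$ is trivial on $p_A^*\Pic^0(A)$, so no preimage exists in $\Pic^0(X)$, but $(1-\sigma^*)$ applied to a line bundle of positive degree along the $A$-factor produces a nontrivial $\Pic^0$-contribution coming from the translation twist $t_\tau^*\sO_A(p_0)\simeq\sO_A(p_0)\otimes P_{-\tau}$.

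I would begin with the case $X=A\times B$ (types 1, 3, and 5), decomposing $L=p_A^*L_A+p_B^*L_B$ with $L_A=P_y\in\hat A$ and $L_B\in\hat B$. Two computations pin down the structure of $L$. First, $\operatorname{Nm}(p_B^*L_B)=0$ for every $L_B$: by \eqref{eq:pullbackofnorm}, $\pi^*\operatorname{Nm}(p_B^*L_B)=p_B^*\sum_{h=0}^{n-1}(\lambda^h)^*L_B$, and because $\lambda^*$ acts on the elliptic curve $\hat B$ as multiplication by a primitive $n$-th root of unity (equal to $[-1]$ for type 1 and induced by the CM action for types 3 and 5), this sum vanishes; hence $\operatorname{Nm}(p_B^*L_B)\in\Ker(\pi^*)=\langle\omega_S\rangle$, and a connectedness argument applied to the morphism $\hat B\to\Pic^0(S)$ forces the value to be identically zero. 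Second, pushing forward divisors along the commutative square $\psi\circ p_A=a_S\circ\pi$ and analyzing the $\sigma$-orbit of $\{y\}\times B$ yields $\operatorname{Nm}(p_A^*P_y)=a_S^*P_{\bar y}$, where $\bar y\in A/G$ is the image of $y$. Since $a_S^*$ is an isomorphism onto $\Pic^0(S)$, the condition $\operatorname{Nm}(L)=0$ forces $y\in G$; therefore $L_A=P_{k\tau}$ (respectively $P_{k\epsilon}$ or $P_{k\eta}$) for some integer~$k$.

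Having pinned down $L_A$, the preimage is built as follows. Take $M_1=k\cdot p_A^*\sO_A(p_0)\in\Pic(X)$; the twist identity above gives
\[(1-\sigma^*)M_1=-k\cdot p_A^*P_{-\tau}=p_A^*P_{k\tau}=p_A^*L_A.\]
For the remaining $\hat B$-factor, the endomorphism $1-\lambda^*$ of the elliptic curve $\hat B$ is nonzero (being $2$, $1-\omega^*$, or $1-\rho^*$ in the respective types), hence an isogeny and therefore surjective; pick $L_B'\in\hat B$ with $(1-\lambda^*)L_B'=L_B$ and set $M_2=p_B^*L_B'$. Then $M=M_1+M_2$ satisfies $(1-\sigma^*)M=L$, concluding the argument in the product case.

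For type 2 the surface $X=(A\times B)/H$ is not literally a product, and I would lift the problem along the degree-2 quotient $q\colon A\times B\to X$: by Proposition \ref{prop:functorial}, $\operatorname{Nm}_{\pi\circ q}=\operatorname{Nm}_\pi\circ\operatorname{Nm}_q$, so $q^*L\in\Ker(\operatorname{Nm}_{\pi\circ q})$, and repeating the strategy above with $\tilde\sigma(x,y)=(x+\tau,-y)$ and $\pi\circ q$ in place of $\sigma$ and $\pi$ produces an explicit preimage $\tilde M\in\Pic(A\times B)$. The principal obstacle---and the step I expect to require the most care---is to arrange that $\tilde M$ is $H$-invariant, hence descends to $M\in\Pic(X)$ with $(1-\sigma^*)M=L$; the hypothesis $\tau\neq\theta_1$, which distinguishes $\sigma$ from the $H$-action, is precisely what provides the freedom to modify $\tilde M$ by an $H$-coboundary to secure this descent.
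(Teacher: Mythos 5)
Your treatment of the product case (types 1, 3 and 5) is correct and is in substance the same as the paper's: the identity $(1-\sigma^*)p_A^*\sO_A(p_0)^{\otimes k}\simeq p_A^*P_{k\tau}$ and the surjectivity of $1-\lambda^*$ on $\Pic^0(B)$ are exactly the two ingredients used there, and your computation $\operatorname{Nm}(p_A^*P_y)\simeq a_S^*P_{\bar y}$ by pushing forward the fiber $\{y\}\times B$ is a clean and correct way of pinning down $L_A$. The paper reaches the same conclusion slightly more indirectly, by identifying $p_A^*\Pic^0(A/H)\cap\Ker(\operatorname{Nm})$ with $\pi^*(\Pic^0(S)[n])$, a cyclic group of order $n$, and then exhibiting an element of order $n$ of $\operatorname{Im}(1-\sigma^*)$ inside it; but the explicit preimage is the same $p_A^*\sO_A(p_0)$ in both arguments.

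The type 2 case, however, contains a genuine gap, and the descent strategy you sketch cannot be repaired in the form you propose. The problem is that $q^*\colon\Pic^0(X)\to\Pic^0(A\times B)$ has a nontrivial kernel which contains precisely the element you most need to control. Concretely, write $r\colon A\to A/\langle\theta_1\rangle$ for the quotient and $\tau'=r(\tau)$, a nonzero $2$-torsion point because $\tau\neq\theta_1$. The line bundle $L=p_A^*P_{\tau'}$ is a nontrivial element of $\Pic^0(X)\cap\Ker(\operatorname{Nm})$, yet $q^*L={\rm pr}_A^*\,r^*P_{\tau'}={\rm pr}_A^*P_{\hat r(r(\tau))}={\rm pr}_A^*P_{2\tau}\simeq\sO_{A\times B}$, since $\hat r\circ r=2_A$. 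So after lifting to $A\times B$ your equation becomes $(1-\tilde\sigma^*)\tilde M\simeq\sO_{A\times B}$, which retains no information about $L$: any $H$-invariant solution $\tilde M=q^*M$ only yields $(1-\sigma^*)M\in\Ker(q^*)=\{\sO_X,\,p_A^*P_{\tau'}\}$, and nothing in the construction decides which of the two elements has been hit. Since descent along $q$ can only ever determine $(1-\sigma^*)M$ modulo $\Ker(q^*)$, this ambiguity is intrinsic to the method, not a matter of choosing the $H$-coboundary more carefully. The paper avoids the issue by never leaving $X$: one uses that $\Pic^0(X)$ is generated by $p_A^*\Pic^0(A/H)$ and $p_B^*\Pic^0(B/H)$ and that $\sigma$ covers the translation $t_{\tau'}$ on $A/H$ and $-1$ on $B/H$, after which your two computations go through verbatim with $A/H$, $B/H$, $\tau'$ in place of $A$, $B$, $\tau$; in particular the problematic element is handled by $p_A^*P_{\tau'}\simeq(1-\sigma^*)p_A^*\sO_{A/H}(p_0)$.
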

Before going any further we need to describe more precisely our setting and introduce some notation.\par
Observe first that, if we let as in \ref{sub:cancover} $p_A:X\rightarrow A/H$ and $p_B:X\rightarrow B/H$ the two elliptic fibrations of the abelian variety $X$, then $\Pic^0(X)$ is generated by $p_A^*\Pic^0(A/H)$ and $p_B^*\Pic^0(B/H)$, thus we can write any $L\in\Pic^0(X)$ as 
$p_A^*\alpha\otimes p_B^*\beta,
$ where $\alpha\in\Pic^0(A/H)$, $\beta\in\Pic^0(B/H)$. In this notation we have the following.

\begin{lemma}\label{lem:pbeta}
For every $\beta\in\Pic^0(B/H)$ we have that $p_B^*\beta$ is in the image of $1-\sigma^*$. In particular these line bundles are in the kernel of the norm homomorphism.
\end{lemma}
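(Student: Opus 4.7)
The plan is, for each $\beta \in \Pic^0(B/H)$, to produce an explicit $M \in \Pic^0(X)$ with $(1-\sigma^*)M \simeq p_B^*\beta$. The key observation is that $\sigma$ and $p_B$ intertwine up to a nontrivial group automorphism $\sigma_B$ of $B/H$: inspecting \eqref{eq:absigma} and \eqref{eq:absigmatype2}, the second component of $\sigma$ descends to $-1$, $\omega$, or $\rho$ according to whether $S$ is of type $1$ or $2$, $3$, or $5$. In each of these cases $\sigma_B$ is a group automorphism of the elliptic curve $B/H$ different from the identity, so $1-\sigma_B^{*} : \Pic^0(B/H) \to \Pic^0(B/H)$ has finite kernel (namely the fixed locus of $\sigma_B^{*}$, which is a proper subgroup scheme) and is therefore an isogeny. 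In particular $1-\sigma_B^*$ is surjective.

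Given this, I would first record the intertwining relation $p_B \circ \sigma = \sigma_B \circ p_B$, which yields
$$(1 - \sigma^*)(p_B^*\gamma) \;=\; p_B^{*}\bigl((1 - \sigma_B^{*})\gamma\bigr)$$
for every $\gamma \in \Pic^0(B/H)$. Then, using the surjectivity established above, I would pick a $\gamma \in \Pic^0(B/H)$ with $(1-\sigma_B^{*})\gamma = \beta$ and set $M := p_B^{*}\gamma$. This shows $p_B^{*}\beta = (1-\sigma^*)M$, proving the first claim.

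The second assertion, that $p_B^*\beta$ is in $\Ker(\operatorname{Nm})$, follows immediately from \eqref{eq:1-sigma}, since the norm map vanishes identically on $\operatorname{Im}(1-\sigma^*)$. I do not anticipate a serious obstacle here; the only point requiring slight attention is the verification that $1-\sigma_B$ is an isogeny of $B/H$ in each of the four cases, but this is just the standard remark that a nontrivial endomorphism of an elliptic curve is surjective with finite kernel.
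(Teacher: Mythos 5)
Your proof is correct and is essentially the paper's argument: the paper also uses the intertwining relation $p_B\circ\sigma=\sigma_B\circ p_B$ (with $\sigma_B$ equal to $-1$, $\omega$, $\rho$, or $-1$ on $B/H$ according to type) and solves $(1-\sigma_B^*)\gamma=\beta$, just written out case by case rather than uniformly. The only cosmetic difference is that for types $1$ and $2$ the paper invokes divisibility of $\Pic^0(B/H)$ to extract a square root of $\beta$, which is the same as your surjectivity of $1-\sigma_B^*=2^*$.
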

\begin{proof} We suppose first that $G$ is cyclic and so the group $H$ is trivial, and $X\simeq A\times B$. We proceed with a case by case analysis.\par
\emph{Type 1 case}. Since abelian varieties are divisible groups, there exist $\gamma\in\Pic^0(B)$ such that $\gamma^{\otimes 2}\simeq\beta$. Then by \eqref{eq:absigma} we have that
$$(1-\sigma^*)p_B^*\gamma\simeq p_B^*\gamma\otimes(\sigma^*p_B^*\gamma)^{-1}\simeq p_B^*2_B^*\gamma\simeq p_B^*\beta,
$$
and the statement is proven in this case.\par
\emph{Type 3 case}. In this case the $j$-invariant of $B$ is 1728 and there is an automorphism $\omega$ of $B$ of order 4. Consider the map $1-\omega:B\rightarrow B$. Since this is not trivial it is an isogeny, and in particular $(1-\omega)^*:\Pic^0(B)\rightarrow\Pic^0(B)$ is surjective. Let $\gamma\in\Pic^0(B)$ such that $(1-\omega)^*\gamma\simeq\beta$, then by \eqref{eq:absigma} we have
$$
(1-\sigma^*)p_B^*\gamma\simeq p_B^*(1-\omega)^*\gamma\simeq p_B^*\beta,
$$
and the statement is proven in this case.\par
\emph{Type 5 case}. This case is similar to the previous one in which instead of $\omega$ we use the automorphism $\rho$. We note that $(1-\rho):B\rightarrow B$ is non trivial, and so an isogeny. In particular the dual map $(1-\rho)^*:\Pic^0(B)\rightarrow\Pic^0(B)$ is surjective and we can find $\gamma$ such that $(1-\rho)^*\gamma\simeq \beta$. Again \eqref{eq:absigma} yields:
$$
(1-\sigma^*)p_B^*\gamma\simeq p_B^*(1-\rho)^*\gamma\simeq p_B^*\beta,
$$
and the statement is proven.\par
In order to conclude we need to analize the case of bielliptic surfaces of type 2. Under this assumption the group $H$ is not trivial but it is cyclic of order 2. Let $B':=B/H$ and observe that we have the following diagram
$$
\xymatrix{X\ar[d]_{p_B}\ar[rr]^\sigma&&X\ar[d]^{p_B}\\B'\ar[rr]^{-1_{B'}}&& B'}
$$
So let, as in the type 1 case, $\gamma\in\Pic^0(B')$ such that $2_{B'}^*\gamma\simeq \beta$, then we will have again that $(1-\sigma^*)p_B^*\gamma\simeq p_B^*\beta$ and the proof is concluded.
\end{proof}
Now let $L=p_A^*\alpha\otimes p_B^*\beta\in\Pic^0(X)$ such that $\operatorname{Nm}(L)\simeq\sO_S.$ Lemma \ref{lem:pbeta} implies that also $p_A^*\alpha$ is in the kernel of the norm homomorphism. In addition we have that the class of $L$ in $H^1(G,\Pic(A\times B))$ is just the class of $p_A^*\alpha$. We have a commutative diagram
\begin{equation*}
\xymatrix{
X\ar[d]_{p_A}\ar[rr]^{\pi}&& S\ar[d]^{a_{{S}}}\\
A/H\ar[rr]_\varphi&&A/G
},\end{equation*}
where the bottom arrow is an isogeny of degree $n$. In particular we can write $p_A^*\alpha\simeq\pi^*M$ with $M\in\Pic^0(S)$. In addition we have that
$$
\sO_S\simeq\operatorname{Nm}(p_A^*\alpha)\simeq M^{\otimes n},
$$
thus we have that
$$
p_A^*\left(\Pic^0(A/H)\right)\cap\operatorname{Ker}(\operatorname{Nm})=\pi^*\left(\Pic^0(S)[n]\right).
$$
It easy to see that the right-hand-side above is a group isomorphic to the cyclic group of order $n$.  Since $\operatorname{Im}(1-\sigma^*)$ is a subgroup of the kernel of the norm, if we provide an element of order $n$  in  $p_A^*\left(\Pic^0(A/H)\right)\cap\operatorname{Im}(1-\sigma^*)$ we would conclude that $$ p_A^*\left(\Pic^0(A/H)\right)\cap\operatorname{Im}(1-\sigma^*)=p_A^*\left(\Pic^0(A/H)\right)\cap\operatorname{Ker}(\operatorname{Nm})$$ and consequently the statement of Proposition \ref{prop:brabpic0}.
Let $p_0$ be the identity element of $A/H$, using the notation of \ref{eq:absigma} and \eqref{eq:absigmatype2} we set
$$
\gamma:=\begin{cases}
\sO_A(p_0)\otimes t_\tau^*(\sO_A(-p_0)),\quad&\text{if $S$ is of type 1,}\\
\sO_{A/H}(p_0)\otimes t_{\tau'}^*(\sO_{A/H}(-p_0)),\quad&\text{if $S$ is of type 2,}\\
\sO_A(p_0)\otimes t_\epsilon^*(\sO_A(-p_0)),\quad&\text{if $S$ is of type 3,}\\
\sO_A(p_0)\otimes t_\eta^*(\sO_A(-p_0)),\quad&\text{if $S$ is of type 5;}\\
\end{cases}
$$
where $\tau'$ is the image of $\tau$ under the isogeny $A\rightarrow A/H$.
Then $\gamma$ is a nontrivial element of $\Pic^0(A/H)$ with the desired property. In addition, by \eqref{eq:absigma} \eqref{eq:absigmatype2}, we have that $p_A^*\gamma\simeq (1-\sigma^*)p_A^*\sO_A(p_0),$ and so we can conclude. \hfill\qed\\

Now we are ready to start our investigation of the Brauer map $\pi_{\operatorname{Br}}:\operatorname{Br}(S)\rightarrow\operatorname{Br}(X)$. We first put ourselves in the special situation in which there are no nontrivial morphisms between $A$ and $B$.
\subsection{The Brauer map when the two elliptic curves are not isogenous}\label{5.2} If there are no isogenies between $A$ and $B$, the the lattice $\operatorname{Num}(X)$ has rank 2 and it is generated by the classes of the two fibers, $a_X$ and $b_X$. In addition, $\pi^*\operatorname{Num}(S)$ is a sublattice of $\operatorname{Num}(X)$ of index $n$. So, let $L$ be in the kernel of the norm map. We have that $L^{\otimes n}$ is numerically equivalent to the pullback of a line bundle from $S$. More precisely we can write
$$L^{\otimes n}\simeq \pi^*L'\otimes p_A^*\alpha\otimes p_B^*\beta\simeq\pi^*M\otimes p_B^*\beta,
$$
with $\beta\in\Pic^0(B/H)$. Lemma \ref{lem:pbeta} ensures that $\pi^*M$ is in the kernel of the norm map. In particular, $M$ is an $n$-torsion element in $\Pic(S)$. We deduce that it is numerically trivial, and so $L$ was numerically trivial to start with. Now we apply Proposition \ref{prop:brabpic0} and deduce the following statement.
\begin{thm}\label{thm:injective} If $S:=A\times B/G$ is a bielliptic surface such that the elliptic curves $A$ and $B$ are not isogenous, then the Brauer map to the canonical cover $\pi_{\text{Br}}:\operatorname{Br}(S)\rightarrow\operatorname{Br}(X)$ is injective.
\end{thm}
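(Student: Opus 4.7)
The plan is to apply Beauville's Proposition~\ref{prop:Beauville}, which identifies the kernel of $\pi_{\operatorname{Br}}$ with $\operatorname{Ker}(\operatorname{Nm})/(1-\sigma^*)\Pic(X)$. Thus it suffices to show that every $L\in\operatorname{Ker}(\operatorname{Nm})$ lies in the image of $1-\sigma^*$. My strategy is to first reduce to the case where $L$ is numerically trivial, and then close with Proposition~\ref{prop:brabpic0}.

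For the reduction, I would exploit the hypothesis that $A$ and $B$ are not isogenous. By the discussion in paragraph~\ref{subsec:NUMprod}, the Hom-part of $\operatorname{Num}(A\times B)$ vanishes, and the same will hold for $\operatorname{Num}(X)$, which is therefore freely generated by the two fiber classes $a_X$ and $b_X$. Equation~\eqref{eq:num abelian} then realises $\pi^*\operatorname{Num}(S)$ as a finite-index sublattice whose index divides $n$. Consequently, for any $L\in\operatorname{Ker}(\operatorname{Nm})$, the power $L^{\otimes n}$ is numerically equivalent to a pullback from $S$, so that we may write
$$L^{\otimes n}\simeq \pi^*L'\otimes p_A^*\alpha\otimes p_B^*\beta$$
for some $L'\in\Pic(S)$, $\alpha\in\Pic^0(A/H)$ and $\beta\in\Pic^0(B/H)$. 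Since $p_A$ factors through $\pi$ via the Albanese morphism $a_S$, the factor $p_A^*\alpha$ is itself a pullback from $S$ and can be absorbed into $\pi^*L'$, giving $L^{\otimes n}\simeq \pi^*M\otimes p_B^*\beta$ for some $M\in\Pic(S)$.

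Next, Lemma~\ref{lem:pbeta} guarantees that $p_B^*\beta$ already lies in $\operatorname{Ker}(\operatorname{Nm})$. Combined with the assumption $L^{\otimes n}\in\operatorname{Ker}(\operatorname{Nm})$, this forces $\pi^*M\in\operatorname{Ker}(\operatorname{Nm})$. By \eqref{eq:normofpullback} we have $\operatorname{Nm}(\pi^*M)\simeq M^{\otimes n}$, so $M$ is an $n$-torsion element of $\Pic(S)$ and in particular is numerically trivial. Therefore $L^{\otimes n}$ is numerically trivial, and since $\operatorname{Num}(X)$ is torsion-free the same is true of $L$. Thus $L\in\Pic^0(X)\cap\operatorname{Ker}(\operatorname{Nm})$, and Proposition~\ref{prop:brabpic0} yields $L\in\operatorname{Im}(1-\sigma^*)$.

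The delicate point I expect is the opening numerical step: one must verify that the absence of isogenies between $A$ and $B$ propagates to the quotient $X$, so that $\operatorname{Num}(X)$ really has rank two with no Hom contribution, and that the index of $\pi^*\operatorname{Num}(S)$ indeed divides $n$ in all the relevant types (1, 2, 3, 5). Once these structural facts are established, the factorisation $p_A=a_S\circ\pi$ and Lemma~\ref{lem:pbeta} take care of stripping off the $p_A^*\alpha$ and $p_B^*\beta$ contributions, and Proposition~\ref{prop:brabpic0} performs the final descent in $\Pic^0$.
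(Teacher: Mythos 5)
Your proposal is correct and follows essentially the same route as the paper: reduce via Beauville's Proposition \ref{prop:Beauville}, use the rank-two structure of $\operatorname{Num}(X)$ (no Hom-part since $A\not\sim B$) to show $L^{\otimes n}\simeq\pi^*M\otimes p_B^*\beta$, strip off $p_B^*\beta$ with Lemma \ref{lem:pbeta} to conclude $M$ is $n$-torsion and hence $L$ is numerically trivial, and finish with Proposition \ref{prop:brabpic0}. The only cosmetic difference is that you justify absorbing $p_A^*\alpha$ into $\pi^*M$ explicitly (the correct reason being surjectivity of $\varphi^*$ on $\Pic^0$ in the square $a_S\circ\pi=\varphi\circ p_A$, rather than $p_A$ literally factoring through $\pi$), a step the paper performs silently.
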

Before going to the next case, observe that if $S$ is a bielliptic surface of type 2, then  we have the following diagram
$$
\xymatrix{A\times B\ar[rr]^{\pi_{\tilde{S}}}\ar[d]_{\varphi}&&\tilde{S}\ar[d]^{\tilde{\pi}}\\
X\ar[rr]_{\pi_S}&&S.
}
$$
If $A$ and $B$ are not isogenous Theorem \ref{thm:injective}) above implies that the Brauer map induced by $\pi_S$ is injective. On the other side, the results of this paragraph imply that the Brauer map induced by $\pi_S\circ\varphi$ is trivial. Then the Brauer map induced by $\varphi$ cannot be injective and we have
\begin{cor}
If $\varphi:X\rightarrow Y$ is an isogeny of abelian varieties, the map $\varphi_{\operatorname{Br}}:\operatorname{Br}(Y)\rightarrow\operatorname{Br}(X)$ is not necessarily injective.
\end{cor}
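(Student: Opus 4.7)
The strategy is to exploit the commutative square displayed just before the corollary and to play Theorem~B(a) off against Theorem~\ref{thm:injective} to produce a non-zero element in the kernel of $\varphi_{\operatorname{Br}}$. Concretely, taking $S$ a bielliptic surface of type $2$ and $\tilde S$ the intermediate bielliptic cover of type $1$ from Example~\ref{ex:type2->type1}(b), I recall the diagram
$$
\xymatrix{A\times B\ar[rr]^{\pi_{\tilde{S}}}\ar[d]_{\varphi}&&\tilde{S}\ar[d]^{\tilde{\pi}}\\
X\ar[rr]_{\pi_S}&&S,}
$$
which commutes because each of the two routes realizes $A\times B\to S$ as the quotient by the full group $G\simeq\mathbb Z/2\mathbb Z\times\mathbb Z/2\mathbb Z$, factored through the two different intermediate $\mathbb Z/2\mathbb Z$-quotients.

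By contravariant functoriality of Brauer pullback this gives the identity $\varphi_{\operatorname{Br}}\circ\pi_{S,\operatorname{Br}}=\pi_{\tilde S,\operatorname{Br}}\circ\tilde\pi_{\operatorname{Br}}$ of homomorphisms $\operatorname{Br}(S)\to\operatorname{Br}(A\times B)$. Theorem~B(a) says precisely that $\tilde\pi_{\operatorname{Br}}=0$ when $S$ is of type $2$, so the right-hand side vanishes, and hence the composite $\varphi_{\operatorname{Br}}\circ\pi_{S,\operatorname{Br}}$ is identically zero.

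To turn this into non-injectivity of $\varphi_{\operatorname{Br}}$, I need the image of $\pi_{S,\operatorname{Br}}$ inside $\operatorname{Br}(X)$ to be non-zero. This is where the non-isogeny hypothesis enters: by Proposition~\ref{prop:torsionSerrano} a type~$2$ bielliptic surface satisfies $\operatorname{Br}(S)\simeq H^2(S,\mathbb Z)_{\mathrm{tor}}\simeq \mathbb Z/2\mathbb Z\neq 0$, and when $A$ and $B$ are not isogenous Theorem~\ref{thm:injective} makes $\pi_{S,\operatorname{Br}}$ injective. Therefore the pullback of the unique non-trivial class in $\operatorname{Br}(S)$ is a non-zero class in $\operatorname{Br}(X)$ that lies in $\operatorname{Ker}(\varphi_{\operatorname{Br}})$, so $\varphi_{\operatorname{Br}}$ fails to be injective.

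The only remaining point is to observe that such an example actually exists, i.e.\ that one can build a type~$2$ bielliptic surface from non-isogenous $A$ and $B$; this is no obstacle, since a type~$2$ construction requires only the choice of two elliptic curves together with $2$-torsion points on each, and non-isogenous pairs are generic. I do not foresee any real difficulty in the proof; the only subtlety is bookkeeping, namely checking that the square above commutes strictly, rather than merely up to isomorphism of targets, so that functoriality of the Brauer pullback may be applied without any correction.
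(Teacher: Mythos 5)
Your proposal is correct and follows essentially the same route as the paper: the authors use exactly this commutative square for a type~2 surface with $A$ and $B$ non-isogenous, combine the triviality of $\tilde\pi_{\operatorname{Br}}$ (Theorem~\ref{thmb}(a)/Corollary~\ref{cor:brbitype2}) with the injectivity of $\pi_{S,\operatorname{Br}}$ from Theorem~\ref{thm:injective}, and conclude that $\varphi_{\operatorname{Br}}$ cannot be injective. Your write-up merely makes explicit two points the paper leaves implicit, namely that $\operatorname{Br}(S)\simeq\mathbb Z/2\mathbb Z\neq 0$ and that non-isogenous pairs $(A,B)$ giving type~2 surfaces exist.
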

\subsection{The Brauer map when the two elliptic curves are isogenous} Suppose now that $A$ and $B$ are isogenous.  Our first step will be to use the description the Picard group and  of the Neron--Severi of $A\times B$ that we outlined in \ref{subsec:NUMprod} in order to find the image of $1-\sigma^*$ and the numerical type of line bundles in the kernel of the Norm homomorphism when $S$ is a cyclic bielliptic surface. We begin with the following Lemma.
\begin{lemma}\label{lem:potentialclasses}
Suppose that $G$ is a cyclic group, so that $X\simeq A\times B$. If $L\in\Pic(A\times B)$ is in the kernel of the Norm map, then $c_1(L)=l(0,0,\varphi)$ for some isogeny $\varphi:B\rightarrow A$.
\end{lemma}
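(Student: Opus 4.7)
The plan is to use equation \eqref{eq:pullbackofnorm}: if $L\in\Ker(\operatorname{Nm})$ then $\pi^*\operatorname{Nm}(L)\simeq \bigotimes_{h=0}^{N-1}(\sigma^h)^*L$ is trivial, where $N=\operatorname{ord}(\omega_S)=\operatorname{ord}(\sigma)$. Passing to $\operatorname{Num}(A\times B)$, this forces the identity $\sum_{h=0}^{N-1}(\sigma^h)^*c_1(L)=0$. Writing $c_1(L)=l(m,n,\varphi)$ in the decomposition \eqref{eq:h2}, the goal reduces to showing that $Nm=Nn=0$, while the contribution to the Hom-part turns out to vanish automatically.

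First I would verify that $\sigma^*$ acts as the identity on both $\Z\cdot[B]$ and $\Z\cdot[A]$: this is immediate from \eqref{eq:absigma}, since $\sigma$ is a translation on the first factor (preserving the fibers $\{x\}\times B$ up to numerical equivalence) and an automorphism $\lambda_B\in\{-1_B,\omega,\rho\}$ of degree one on the second factor (preserving the fibers $A\times\{y\}$). Hence the $[B]$- and $[A]$-components of $\sum_h(\sigma^h)^*c_1(L)$ are exactly $Nm$ and $Nn$.

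The key computation is the action of $\sigma^*$ on the Hom-part. From the factorization
\[
(1_A\times\varphi)\circ\sigma=(t_\tau\times 1_A)\circ\bigl(1_A\times(\varphi\circ\lambda_B)\bigr)
\]
and the see-saw identity $(t_\tau\times 1_A)^*\sP_A\simeq\sP_A\otimes {\rm pr}_2^*P_\tau$, I would deduce that $\sigma^*L(0,0,\varphi)$ differs from $L(0,0,\varphi\circ\lambda_B)$ by the pullback of an element of $\Pic^0(B)$ via $p_B$, which is numerically trivial. Hence $\sigma^*l(0,0,\varphi)=l(0,0,\varphi\circ\lambda_B)$, and consequently
\[
\sum_{h=0}^{N-1}(\sigma^h)^*c_1(L)=l\Bigl(Nm,\,Nn,\,\varphi\circ\sum_{h=0}^{N-1}\lambda_B^h\Bigr).
\]
A direct calculation in $\End(B)$ gives $\sum_{h=0}^{N-1}\lambda_B^h=0$ in every case: $1+(-1)=0$ in type $1$, $1+\omega+\omega^2+\omega^3=0$ in $\Z[i]$ for type $3$, and $1+\rho+\rho^2=0$ in $\Z[\zeta_3]$ for type $5$. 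Thus the Hom-part vanishes automatically, and the vanishing of the sum forces $m=n=0$, proving the claim (with $\varphi$ necessarily an isogeny, since any nonzero morphism between elliptic curves is such; the case $\varphi=0$ would give $L$ numerically trivial and is subsumed in the later analysis).

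The main technical step is the action of $\sigma^*$ on the Hom-summand, which requires carefully tracking how the Poincar\'e bundle transforms under translation on the first factor. Once this is established, the conclusion is a purely formal consequence of the identity $\sum_{h=0}^{N-1}\lambda_B^h=0$, which itself reflects the fact that $\lambda_B$ is a primitive $N$-th root of unity in $\End(B)$.
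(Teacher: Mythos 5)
Your proposal is correct and follows essentially the same route as the paper: both reduce to the numerical identity $\sum_{h}(\sigma^h)^*c_1(L)=0$ coming from \eqref{eq:pullbackofnorm}, observe that $\sigma^*$ fixes the fiber classes and sends $l(0,0,\varphi)$ to $l(0,0,\varphi\circ\lambda_B)$, and conclude from $\sum_h\lambda_B^h=0$. The only difference is that you spell out the see-saw computation behind the action of $\sigma^*$ on the Hom-part, which the paper simply asserts.
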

\begin{proof}
By the result of \ref{subsec:NUMprod} we have that $c_1(L)=l(m,n,\varphi)$ for two integers $n$ and $m$ and an isogeny $\varphi$. Suppose that $S$ is of type 1 and  $L$ is in the kernel of the norm map. We have that $L\otimes\sigma^*L$ is trivial. In particular $c_1(L\otimes\sigma^*L)$ is zero. But then we get the following
\begin{align*}
0&= c_1(L\otimes\sigma^*L)\\
&= c_1(L)+\sigma^*c_1(L)\\
&= l(m,n,\varphi)+l(m,n,-\varphi)=l(2m,2n,0).
\end{align*}
We conclude that $n=m=0$. At the same time, if $S$ is of type 5, we have that
$$
0=c_1(L)+\sigma^*c_1(L)+(\sigma^2)^*c_1(L)=l(3m,3n,0).
$$
Finally, if $S$ is of type 3 we get
$$0=c_1(L)+\sigma^*c_1(L)+(\sigma^2)^*c_1(L)+(\sigma^3)^*c_1(L)=l(4m,4n,0),
$$
so the statement is proven.
\end{proof}
We turn now our attention to the Brauer map in general and we study it by performing a case by case analysis  on the different type of bielliptic surfaces.
\subsubsection{Bielliptic surfaces of type 1} In this paragraph we study the Brauer map to the canonical cover of bielliptic surfaces of type 1. If $B$ does not have complex multiplication, we fix, once and for all, $\psi:B\rightarrow A$ be  a generating isogeny. Otherwise we fix $\psi_i:B\rightarrow A$, for $i=1,2$ two generators of $\Hom(B,A)$.  Our first step is to describe $(1-\sigma^*)\Pic(A\times B)$.
\begin{lemma}\label{image}
Let $S$ be a bielliptic surface of type 1, and consider $L\in (1-\sigma^*)\Pic(A\times B)$, then there exist three integers $m$, $h$ and $k$, and a line bundle $\beta\in\Pic^0(B)$ such that
$$
L\simeq\begin{cases}
L(P_\tau^{\otimes m},\beta,2h\cdot\psi) &\text{if $B$ does not have complex multiplication;}\\
L(P_\tau^{\otimes m},\beta, 2h\cdot\psi_1+2k\cdot\psi_2) &\text{if $B$ has complex multiplication.}\\
\end{cases}
$$
\end{lemma}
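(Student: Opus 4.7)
The plan is to compute $\sigma^*L$ explicitly for a general $L = L(L_A, L_B, \varphi) \in \Pic(A\times B)$, and then identify the image of $1-\sigma^*$ by reading off each of the three slots of \eqref{Picard} separately.

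Using \eqref{eq:absigma}, I decompose $\sigma$ as $t_{(\tau,0)}\circ (1_A\times (-1_B))$. The two projection factors of $L$ transform in the obvious way: $\sigma^*{\rm pr}_A^*L_A = {\rm pr}_A^*t_\tau^*L_A$ and $\sigma^*{\rm pr}_B^*L_B = {\rm pr}_B^*(-1_B)^*L_B$. The delicate piece is the Poincaré factor. A see-saw argument on $A\times A$, checking the restrictions of $t_{(\tau,0)}^*\sP_A \otimes \sP_A^{-1}$ to $A\times\{0\}$ and $\{0\}\times A$, yields $t_{(\tau,0)}^*\sP_A \simeq \sP_A\otimes {\rm pr}_2^*P_\tau$. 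Combining this with the identity $(1_A\times\varphi)\circ\sigma = t_{(\tau,0)}\circ(1_A\times(-\varphi))$, and using that $(-1_A)^*P_\tau \simeq P_\tau^{-1}\simeq P_\tau$ because $\tau$ is $2$-torsion, one obtains
\[\sigma^*L(L_A,L_B,\varphi)\simeq L\bigl(t_\tau^*L_A,\ (-1_B)^*L_B\otimes \varphi^*P_\tau,\ -\varphi\bigr).\]

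From here I compute $(1-\sigma^*)L = L\otimes (\sigma^*L)^{-1}$ and unpack the three slots. The isogeny slot becomes $\varphi-(-\varphi)=2\varphi$, which, writing $\varphi=h\psi$ or $\varphi=h\psi_1+k\psi_2$ depending on whether $B$ has complex multiplication, gives the desired $2h\cdot\psi$ or $2h\cdot\psi_1+2k\cdot\psi_2$. The $B$-slot $L_B\otimes(-1_B)^*L_B^{-1}\otimes\varphi^*P_\tau$ already lies in $\Pic^0(B)$: writing $L_B\simeq\sO_B(e\cdot p_0)\otimes P_x$, one computes $L_B\otimes(-1_B)^*L_B^{-1}\simeq P_{2x}$, and since multiplication by $2$ is surjective on the elliptic curve $B$ while $\varphi^*P_\tau$ is a fixed element of $\Pic^0(B)$, this slot realizes every prescribed $\beta\in\Pic^0(B)$ as $L_B$ varies. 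For the $A$-slot $L_A\otimes t_\tau^*L_A^{-1}$, the theorem of the square identifies it with $\phi_{L_A}(\tau)^{-1}$; since $\phi_{L_A}$ is multiplication by $d=\deg L_A$ under the principal polarization, this slot equals $P_\tau^{\otimes m}$ for an integer $m$ (explicitly $m\equiv d\pmod 2$, using that $P_\tau$ has order dividing $2$).

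The main obstacle is the see-saw computation for $t_{(\tau,0)}^*\sP_A$ together with keeping signs straight when moving $(-1_B)^*$ through the Poincaré pullback; the hypothesis that $\tau$ is $2$-torsion enters crucially in two places, namely to identify $(-1_A)^*P_\tau$ with $P_\tau$ and to collapse the $A$-slot to a pure power of $P_\tau$ rather than an arbitrary element of $\Pic^0(A)$. Once these are handled the lemma follows from a direct tensor calculation.
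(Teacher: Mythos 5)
Your proof is correct and follows essentially the same route as the paper's: both compute $\sigma^*L(L_A,L_B,\varphi)\simeq L(t_\tau^*L_A,\,(-1_B)^*L_B\otimes\varphi^*P_\tau,\,-\varphi)$ by a see-saw argument and then read off the three slots of $(1-\sigma^*)L$, with the Hom-slot doubling, the $B$-slot landing in (and sweeping out) $\Pic^0(B)$, and the $A$-slot collapsing to $P_\tau^{\otimes m}$ via the theorem of the square. Your write-up is merely more explicit about the decomposition $\sigma=t_{(\tau,0)}\circ(1_A\times(-1_B))$ and the identity $t_{(\tau,0)}^*\sP_A\simeq\sP_A\otimes{\rm pr}_2^*P_\tau$, which the paper leaves implicit.
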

\begin{proof}
We do the complex multiplication case, the other is similar. Let $M\in\Pic(A\times B)$, then by the results of \ref{subsec:NUMprod} we have that $M\simeq L(M_A,M_B,h\cdot\psi_1+k\cdot\psi_2)$. We can write $M_A\simeq\sO_A(n\cdot p_0)\otimes\alpha$ and $M_B\simeq \sO_B(m\cdot q_0)\otimes\gamma$ for $q_0$ the identity element of $B$, some integers $n$ and $m$ and some topologically trivial line bundles $\alpha$ and $\gamma$. With this notation we find that
\begin{align*}\sigma^* & M\simeq L\left(t_\tau^*\sO_A(n\cdot p_0)\otimes\alpha,\sO_B(m\cdot q_0)\otimes\gamma^{-1}\otimes(-h\cdot\psi_1-k\cdot\psi_2)^*P_\tau,-h\cdot\psi_1-k\cdot\psi_2\right).
\end{align*}
Observe that as $\gamma$ ranges in all $\Pic^0(B)$ also $\beta:=\gamma^{\otimes 2}\otimes (h\cdot\psi_1+k\cdot\psi_2)^*P_\tau$ ranges in the whole $\Pic^0(B)$. In addition we have that
$$
(1-\sigma^*)M\simeq L\left(P_\tau^{\otimes n}, \beta, 2h\cdot\psi_1+2k\cdot\psi_2\right)$$
\end{proof}
\begin{rmk}\label{rmk:iff image}
It is not difficult to check that, for any two integers $h$ and $k$
$$
L(0,0,2h\cdot\psi_1+2k\cdot\psi_2)=L(0,0,h\cdot\psi_1+k\cdot\psi_2)\otimes\sigma^*L(0,0,h\cdot\psi_1+k\cdot\psi_2)^{-1},
$$
and so it is in $\operatorname{Im}(1-\sigma^*)$.
\end{rmk}
We are now ready to prove one of the main statements of this section:
\begin{thm}\label{thm:type1}
Suppose that $S$ is a bielliptic surface of type 1 whose canonical cover is $A\times B$ with $A$ and $B$ isogenous elliptic curves. Then the Brauer map to the canonical cover of $S$ is not injective if, and only if, one of the following mutually exclusive conditions is satisfied:
\begin{enumerate}
    \item the elliptic curve $B$ (and so $A$) does not have complex multiplication and $\psi^*P_{\tau}$ is trivial;
    \item the elliptic curve $B$ (and so $A$) has complex multiplication and we have that at least one of the following line bundles is trivial
    \begin{equation}\label{eq:lnbndl}L_1:=\psi_1^*P_{\tau},\quad L_2:=\psi_2^*P_\tau,\quad L_3:=(\psi_1+\psi_2)^*P_\tau\end{equation}
\end{enumerate}
\end{thm}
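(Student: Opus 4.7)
The plan is to apply Proposition \ref{prop:Beauville} to rewrite $\operatorname{Ker}(\pi_{\operatorname{Br}})$ as $\operatorname{Ker}(\operatorname{Nm})/(1-\sigma^*)\Pic(X)$ and then compute this quotient explicitly. Proposition \ref{prop:brabpic0} already tells me that any $L \in \Pic^0(X) \cap \operatorname{Ker}(\operatorname{Nm})$ lies in $(1-\sigma^*)\Pic(X)$, so only classes with $c_1(L) \neq 0$ can contribute, and Lemma \ref{lem:potentialclasses} ensures that such a class has $c_1(L) = l(0,0,\varphi)$ for some isogeny $\varphi \colon B \to A$. Writing $L \simeq L(\alpha, \beta, \varphi)$ with $\alpha \in \Pic^0(A)$ and $\beta \in \Pic^0(B)$, my strategy is to compute $\sigma^* L$ directly and reduce the description of the Brauer kernel to a condition purely on $\varphi$.

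Using $\sigma = (t_\tau, -1_B)$ together with the standard Poincar\'e bundle identities $(t_\tau \times 1)^*\sP_A \simeq \sP_A \otimes p_2^* P_\tau$ and $(1 \times (-1_A))^*\sP_A \simeq \sP_A^{-1}$, and recalling that translations act trivially on $\Pic^0(A)$ while $(-1_B)^*\beta \simeq \beta^{-1}$, a see-saw computation gives
$$\sigma^* L \;\simeq\; L\bigl(\alpha,\; \beta^{-1} \otimes \varphi^* P_\tau,\; -\varphi\bigr),$$
and hence $\pi^*\operatorname{Nm}(L) \simeq L \otimes \sigma^* L \simeq p_A^*(\alpha^{\otimes 2}) \otimes p_B^*(\varphi^* P_\tau)$. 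Therefore $L \in \operatorname{Ker}(\operatorname{Nm})$ forces $\alpha \in \Pic^0(A)[2]$ and $\varphi^* P_\tau \simeq \sO_B$, while conversely every such $\varphi$ is realized as the Hom-part of an element of $\operatorname{Ker}(\operatorname{Nm})$ (using Remark \ref{rmk:trick} to absorb the two-torsion ambiguity $\operatorname{Ker}(\pi^*) = \{\sO_S, \omega_S\}$). On the other hand, Lemma \ref{image} tells me that any element of $(1-\sigma^*)\Pic(X)$ has Hom-part lying in $2\Hom(B,A)$. Given two $L_1, L_2 \in \operatorname{Ker}(\operatorname{Nm})$ sharing the same $\varphi$, the quotient $L_1 \otimes L_2^{-1}$ is topologically trivial and in $\operatorname{Ker}(\operatorname{Nm})$, hence in $(1-\sigma^*)\Pic(X)$ by Proposition \ref{prop:brabpic0}; thus the class $[L]$ depends only on $\varphi$ modulo $2\Hom(B,A)$, yielding an isomorphism
$$\operatorname{Ker}(\pi_{\operatorname{Br}}) \;\simeq\; \{\,\varphi \in \Hom(B,A) \,:\, \varphi^* P_\tau \simeq \sO_B\,\}\,\big/\,2\Hom(B,A).$$

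The theorem then follows from a direct case analysis. In the non-CM situation $\Hom(B,A) = \Z\cdot\psi$, so $\varphi = h\psi$ and $\varphi^* P_\tau = (\psi^* P_\tau)^{\otimes h}$; since $\psi^* P_\tau$ is 2-torsion, a nontrivial class (requiring $h$ odd) exists iff $\psi^* P_\tau \simeq \sO_B$, which is condition (1). In the CM case $\Hom(B,A) = \Z\psi_1 \oplus \Z\psi_2$ and $\varphi^* P_\tau = L_1^{\otimes h}\otimes L_2^{\otimes k}$; running through the three nontrivial residues of $(h,k)$ in $(\Z/2\Z)^2$ produces the trivialities of $L_1$, $L_2$, and $L_3 = L_1 \otimes L_2$ respectively, giving condition (2). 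The two conditions are mutually exclusive because they are governed by mutually exclusive geometric hypotheses on $B$. The main technical obstacle is the explicit see-saw computation of $\sigma^* L$, since it must cleanly combine translation and inversion on the Poincar\'e bundle; once that is in place, the remainder is essentially bookkeeping, with Proposition \ref{prop:brabpic0} doing the work of collapsing the $\Pic^0$-freedom in both factors.
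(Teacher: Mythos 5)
Your proposal is correct and follows essentially the same route as the paper: the identity $\pi^*\operatorname{Nm}(L)\simeq p_A^*\alpha^{\otimes 2}\otimes p_B^*\varphi^*P_\tau$ is exactly the paper's key computation \eqref{eq:important type 1}, and the reduction to the parity of the Hom-part rests on the same ingredients (Lemma \ref{lem:potentialclasses}, Lemma \ref{image} together with Remark \ref{rmk:iff image}, Proposition \ref{prop:brabpic0}, and the $\Pic^0$ trick). Packaging the conclusion as the isomorphism $\operatorname{Ker}(\pi_{\operatorname{Br}})\simeq\{\varphi:\varphi^*P_\tau\simeq\sO_B\}/2\Hom(B,A)$ is a tidy reformulation, but the underlying argument is the one in the paper.
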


\begin{proof}
We deal with the complex multiplication case that is slightly more involved. The argument for the other case is very similar.\par Before explaining the details of our reasoning we would like to give, for the reader convenience, a quick outline of the proof. The key observation is that the assumption on the line bundles \eqref{eq:lnbndl} are equivalent to the triviality of the norm of one of the following invertible sheaves:
\begin{equation}\label{candidates}
M_1:=(1\times\psi_1)^*\sP_A,\;M_2:=\;(1\times\psi_2)^*\sP_A,\; M_3:=(1\times(\psi_1+\psi_2))^*\sP_A
\end{equation}
being topologically trivial. Therefore, if the assumptions are verified, we can use the $\Pic^0$ trick (Remark \ref{rmk:trick}) to provide an element in the kernel of the norm map. Such an element will give by construction a nontrivial class in $\operatorname{Ker}\operatorname{Nm}/\operatorname{Im}(1-\sigma^*).$ Conversely, if neither of the line bundles is trivial, then an element in the kernel of the norm map will be forced to be numerically equivalent to $(1\times 2\cdot \varphi)^*\sP_A$ for some isogeny $\varphi\in\Hom(B,A)$. Then we will apply Lemma \ref{image} and see that such a line bundle lies in $\operatorname{Im}(1-\sigma^*),$ so no element of $\Pic(A\times B)$ yields a nontrivial class in $\operatorname{Ker}\operatorname{Nm}/\operatorname{Im}(1-\sigma^*).$\par
Now, for the complete argument, observe first that  by \eqref{eq:pullbackofnorm} and the see-saw principle, it is easy to check  that, for every $\alpha$ in $\Pic^0(A)$ and every isogeny $\varphi:B\rightarrow A$, 
\begin{align}\label{eq:important type 1}
\begin{split}
    \pi^*\operatorname{Nm}((1\times\varphi)^*\sP_A\otimes p_A^*\alpha)&\simeq
    (1\times\varphi)^*\sP_A\otimes(1\times\varphi)^*\sP_A^{-1}\otimes\\
    &\quad\quad\otimes p_A^*\alpha^{\otimes 2}\otimes p_B^*\varphi^*P_{\tau}\\
    &\simeq p_A^*\alpha^{\otimes 2}\otimes p_B^*\varphi^*P_{\tau}.
\end{split}
\end{align}
Suppose first that one of the three line bundles in \eqref{eq:lnbndl} is trivial. To fix the ideas we can assume that $\psi_1^*P_\tau$ is trivial, the argument is identical in the other cases. Then by \eqref{eq:important type 1} we have that $\operatorname{Nm}((1\times\psi_1)^*\sP_A)$ is in the kernel of $\pi^*$, so in particular it is in $\Pic^0(S)$. We can therefore apply $\Pic^0$ trick (Remark \ref{rmk:trick}) and  find $\gamma\in\Pic^0(S)$ such that the norm of $(1\times\psi_1)^*\sP_A\otimes\pi^*\gamma$ is trivial. But by Lemma \ref{image} we have that $(1\times\psi_1)^*\sP_A\otimes\pi^*\gamma$ is not in the image of $1-\sigma^*$ and so it defines a non trivial class in
$\operatorname{Ker}\operatorname{Nm}/\operatorname{Im}(1-\sigma^*)$, and one direction of the statement is proven.\par
Conversely suppose that there is a line bundle $L$ on $X$ which identifies a non trivial class in $\operatorname{Ker}\operatorname{Nm}/\operatorname{Im}(1-\sigma^*)$. By Lemmas \ref{lem:potentialclasses} and \ref{image} we can write
$$
L\simeq(1\times h\cdot\psi_1+k\cdot\psi_2)^*\sP_A\otimes p_A^*\alpha\otimes p_B^*\beta,
$$
for two integers $h$ and $k$, and two topologically trivial line bundles $\alpha$
 and $\beta$. Not that  $h$ and $k$ cannot be both even, for otherwise Lemma \ref{image} and Remark \ref{rmk:iff image} yield that 
 $[L]=[p_A^*\alpha\otimes p_B^*\beta]\in\operatorname{Ker}\operatorname{Nm}/\operatorname{Im}(1-\sigma^*)$ which, by Proposition \ref{prop:brabpic0}, implies that  $[L]=0$ . Thus we can assume that one between $h$ and $k$ is odd. Then by Lemma \ref{lem:pbeta} and Lemma \ref{image} we have that $$L\simeq (1\times\psi_i)^*\sP_A\otimes p_A^*\alpha\otimes M,\quad\text{or}\quad L\simeq (1\times\psi_1+\psi_2)^*\sP_A\otimes p_A^*\alpha\otimes M,$$ with $M$ in $\operatorname{Im}(1-\sigma^*)$. In particular one of the following line bundles
 in \ref{candidates}  has trivial norm. We deduce by \eqref{eq:important type 1} that one of the line bundles in \eqref{eq:lnbndl} is trivial and the statement is proved.
 \end{proof}
 \begin{ex}\label{not count} \begin{enumerate}[leftmargin=0cm,itemindent=1cm,labelwidth=.5cm,labelsep=.1cm,align=left,label=(\alph*)]
     \item Suppose that $A\simeq B$. If $A$ does not have complex multiplication, then we can take $\psi=\pm1_A$. In particular we have that $\psi^*P_\tau$ is never trivial and the Brauer map is injective. 
     \item Suppose again that $A\simeq B$ and that the $j$-invariant of $A$ is 1728. Then $\End(A)\simeq\mathbb{Z}[i]$ and the multiplication by $i$ induces an automorphism $\omega$ of $A$ of order 4, and we can take $1_A$ and $\omega$ as generators of $\End(A)$. Suppose that $P_\tau$ is a fixed point\footnote[2]{For example we can identify $A$ with its dual and $\omega^*$ with $\omega$ and take  $\tau=(\frac{1}{2},\frac{1}{2})+\Lambda$, where $\Lambda=<1,i>$ $A\simeq\mathbb{C}/\Lambda$.} of the dual automorphism $\omega^*$. Then $(1_A+\omega)^*P_\tau$ is zero and the Brauer map is not injective.
     \item We can also use a similar argument to construct uncountably many Type 1 byelliptic surfaces with non injective Brauer map. Let $B$ any elliptic curve without complex  multiplication and chose $\theta$ a point of over 2 on $B$. Let $A:=B/\theta$ and $\psi:B\rightarrow A$ the quotient map. This is a degree 2 isogeny, so it is primitive and hence generating. If $\tau$ is the only point of order 2 in $\operatorname{Ker}\psi^*$, then we have that the data $A$, $\tau$, $B$ uniquely identify a Type 1 bielliptic surface which has a non injective Brauer map.
 \end{enumerate}
 
 \end{ex}
In order to complete our description of the Brauer map for type 1 bielliptic surfaces we need to give necessary and sufficient conditions for it to be trivial. To this aim we want to provide two distinct non-zero classes in $\operatorname{Ker}\operatorname{Nm}/\operatorname{Im}(1-\sigma^*).$
We can assume that the Brauer map is already non-injective, and so the condition of Theorem \ref{thm:type1} are satisfied. Suppose first that $B$ does not have complex multiplication. And consider $L$ in the kernel of the norm map, yielding a non trivial class in $\operatorname{Ker}\operatorname{Nm}/\operatorname{Im}(1-\sigma^*)$. Then, as before, we have that 
$$
L\simeq(1\times h\cdot\psi)^*\sP_A\otimes p_A^*\alpha\otimes p_B^*\beta.
$$
Again by Lemma \ref{image} we can assume that $h$ is odd, and the same result also yields that in $\operatorname{Ker}\operatorname{Nm}/\operatorname{Im}(1-\sigma^*)$ the class of $L$ and that of $(1\times \psi)^*\sP_A\otimes p_A^*\alpha$ are the same. Since $\psi^*P_\tau$ is trivial, \eqref{eq:important type 1} implies that  $(1\times \psi)^*\sP_A\otimes p_A^*\gamma$ is in the kernel of the norm map for some $\gamma\in\Pic^0(A)$. So $\operatorname{Nm}(p_A^*(\alpha\otimes\gamma^{-1}))\simeq\sO_S$ and as before $p_A^*(\alpha\otimes\gamma^{-1})$ lies in the image of $(1-\sigma^*)$. We deduce that, in $\operatorname{Ker}\operatorname{Nm}/\operatorname{Im}(1-\sigma^*)$,
$$
[L]=[(1\times\psi)^*\sP_A\otimes p_A^*\gamma]=[(1\times\psi)^*\sP_A\otimes p_A^*\delta],
$$
for every $\delta\in\Pic^0(A)$ such that $(1\times\psi)^*\sP_A\otimes p_A^*\delta$ is in the kernel of the norm homomorphism. In particular there is only one non-trivial element in $\operatorname{Ker}\pi_{\text{Br}}$.\par
Thus we can assume that $B$ has complex multiplication and that, as before, we have fixed $\psi_1$ and $\psi_2$ a system of generators for $\Hom(A,B)$. Suppose that only one among the line bundles \eqref{eq:lnbndl} is trivial, for example $L_1$, and as usual take $L$ in the kernel of the norm map. As before we can write $L\simeq M_i\otimes p_A^*\alpha\otimes M$  with $M$ in the image of $(1-\sigma^*)$ and $M_i$ one of the line bundles appearing in \eqref{candidates}. We deduce that $i=1$ and that the class of $L$ in $\operatorname{Ker}\operatorname{Nm}/\operatorname{Im}(1-\sigma^*)$ is equal to the class of $M_1\otimes p_A^*\gamma$  for every $\gamma\in\Pic^0(A)$ such that $\operatorname{Nm}(M_1\otimes p_A^*\gamma)$ is trivial. Thus, there is just one non-zero class and the Brauer map is again non trivial. Finally suppose that two (and so all) line bundles in \eqref{eq:lnbndl} are trivial. We have that both $M_1$ and $M_2$ are in the kernel of the norm map. In addition 
$$
M_1\otimes M_2^{-1}\simeq (1\times (\psi_1-\psi_2))^*\sP_A,
$$
which by Lemma \ref{image} is not in the image of $(1-\sigma^*)$. Therefore we deduce that they determine two different classes in $\operatorname{Ker}\operatorname{Nm}/\operatorname{Im}(1-\sigma^*)$, and hence the Brauer map is trivial.
We have thus proven the following statement
\begin{thm}\label{thm:t1trivial}
The Brauer map to the canonical cover of a type 1 bielliptic surface is trivial if, and only if, the elliptic curves $A$ and $B$ are isogenous, $B$ has complex multiplication, and  all the line bundles in \eqref{eq:lnbndl} are trivial.
\end{thm}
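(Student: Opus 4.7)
The plan is to apply Proposition \ref{prop:Beauville} and reduce the problem to showing that $\operatorname{Ker}(\operatorname{Nm})/\operatorname{Im}(1-\sigma^*)$ has order exactly $4$, since $\operatorname{Br}(S)\simeq\Z/2\Z\times\Z/2\Z$ for a type 1 bielliptic surface. Triviality of $\pi_{\operatorname{Br}}$ is thus equivalent to exhibiting two distinct non-trivial classes in this quotient. By Theorem \ref{thm:injective} I may assume $A$ and $B$ are isogenous, and by Theorem \ref{thm:type1} I may assume at least one of the $L_i$ is trivial, since otherwise $\pi_{\operatorname{Br}}$ is already injective and hence non-trivial.

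For the necessity, I would bound the number of non-trivial classes. Pick $L\in\operatorname{Ker}(\operatorname{Nm})$ representing a non-zero class. By Lemma \ref{lem:potentialclasses} its numerical class lies in the Hom-part, and by Lemma \ref{lem:pbeta} together with the $\operatorname{Pic}^0$ trick (Remark \ref{rmk:trick}) and Proposition \ref{prop:brabpic0}, any $\operatorname{Pic}^0$-contribution can be absorbed modulo $\operatorname{Im}(1-\sigma^*)$. Lemma \ref{image} and Remark \ref{rmk:iff image} then show that the Hom-coefficients only matter modulo $2$. In the non-CM case only one odd residue is available, yielding at most one non-trivial class; in the CM case the three non-zero reductions $(1,0),(0,1),(1,1)$ correspond via \eqref{eq:important type 1} to the vanishing of $L_1$, $L_2$, $L_3$ respectively. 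So unless all three $L_i$ are trivial we recover at most one non-trivial class, forcing the stated hypothesis.

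For the sufficiency, assume $A$ and $B$ isogenous with $B$ having complex multiplication and $L_1=L_2=L_3=0$ (note $L_3\simeq L_1\otimes L_2$, so it suffices to impose $L_1=L_2=0$). Formula \eqref{eq:important type 1} gives $\operatorname{Nm}(M_1),\operatorname{Nm}(M_2)\in\operatorname{Pic}^0(S)$, so Remark \ref{rmk:trick} produces $\alpha_1,\alpha_2\in\operatorname{Pic}^0(A)$ with each $M_i\otimes p_A^*\alpha_i$ lying in $\operatorname{Ker}(\operatorname{Nm})$. Both represent non-trivial classes by Lemma \ref{image}. The crucial step is to check they are distinct: their ratio has Hom-class $l(0,0,\psi_1-\psi_2)$ with coefficients $(1,-1)$ not both even, so Lemma \ref{image} prevents it from lying in $\operatorname{Im}(1-\sigma^*)$, producing two genuinely distinct non-trivial classes and hence triviality of $\pi_{\operatorname{Br}}$.

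The main obstacle is precisely this distinctness check. The whole strategy hinges on Lemma \ref{image}: $\operatorname{Im}(1-\sigma^*)$ only contains Hom-classes with even coefficients, so the mod-$2$ reduction of the Hom-coefficients governs the structure of $\operatorname{Ker}(\operatorname{Nm})/\operatorname{Im}(1-\sigma^*)$, and the count of non-trivial classes matches $|\operatorname{Br}(S)|-1=3$ exactly when all three $L_i$ vanish.
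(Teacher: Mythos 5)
Your proposal is correct and follows essentially the same route as the paper: reduce via Proposition \ref{prop:Beauville} to counting classes in $\operatorname{Ker}(\operatorname{Nm})/\operatorname{Im}(1-\sigma^*)$, use Lemmas \ref{lem:potentialclasses}, \ref{lem:pbeta}, \ref{image} and Proposition \ref{prop:brabpic0} to show the class of any kernel element is governed by the mod-$2$ reduction of its Hom-coefficients, and in the case where all $L_i$ vanish produce the two distinct non-trivial classes $[M_1\otimes p_A^*\alpha_1]$, $[M_2\otimes p_A^*\alpha_2]$, distinguished exactly as in the paper by the fact that $(1\times(\psi_1-\psi_2))^*\sP_A\notin\operatorname{Im}(1-\sigma^*)$. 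The observation that $L_3\simeq L_1\otimes L_2$ (so two trivial implies all three trivial) is also the paper's.
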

\begin{ex}
\begin{enumerate}[leftmargin=0cm,itemindent=1cm,labelwidth=.5cm,labelsep=.1cm,align=left,label=(\alph*)]
    \item 
    If $A\simeq B$ then the Brauer map is never trivial. Suppose otherwise that there are $\psi_1$ and $\psi_2$ generators of $\End(A)$ such that both $\psi_1^*P_\tau$ and $\psi_2^*P_\tau$ are zero. Then we can write $1_A=h\cdot\psi_1+k\cdot\psi_2$ and we would get that $P_\tau\simeq 1_A^*P_\tau$ is trivial, reaching an obvious contradiction.
   \item Let now $A\simeq\mathbb{C}/\Z[2i]$  and let $\tau$ the point $(0,i)+\Z[2i]$. The elliptic curve $B:=A/<\tau>$ has $j$-invariant 1728 and $\operatorname{Hom}(B,A)$ is generated by the isogenies $\psi_1:=\varphi_2$ and $\psi_2:=\varphi_2\circ\lambda_B$, where $\varphi_2:B\rightarrow A$ denotes the isogeny induced by multiplication by $2$ (see Example \ref{ex:computation} in the Appendix). Observe that  $$\varphi_2^*(P_\tau)\simeq\varphi_2^*(\sO_A(\tau-p_0)\simeq\sO_A(\varphi_2(\tau)-\varphi(p_0))\simeq\sO_B$$
   Thus we have that $\psi_1^*P_\tau\simeq\psi_2^*P_\tau\simeq\sO_B$ and the Brauer map is trivial.

\end{enumerate}

\end{ex}

\subsubsection{Bielliptic surfaces of type 3} Let now $S$ be a bielliptic surface of type 3. Then the canonical cover of $S$ is isomorphic to $A\times B$ with $j(B)=1728$ and multiplication by $i$ induces and automorphism $\omega$ of $B$ of order 4, $\omega$. By the discussion in \ref{subsec:NUMprod}, it is possible to find a generating isogeny $\psi$ such that 
$$
\operatorname{Num}(X)=\left<l(1,0,0),\;l(0,1,0)\;l(0,0,\psi),\; l(0,0,\psi\circ\omega)\right>,
$$
We fix, once and for all, such a $\psi$ and prove the following Lemma, which yields a precise description of $(1-\sigma^*)\Pic(X)$.
\begin{lemma}\label{lem:image3}
Let $\varphi:B\rightarrow A$ and isogeny. Then there are two integers $h$ and $k$
 such that $\varphi=h\cdot\psi+k\cdot\psi\circ\omega$. Then the line bundle $(1\times\varphi)^*\sP_A\in \operatorname{Im}(1-\sigma^*)$ if and only if h+k is even. \end{lemma}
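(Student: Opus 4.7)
The idea is to first compute explicitly the action of $\sigma^*$ on line bundles of the form $(1\times\mu)^*\sP_A$, reduce modulo numerical equivalence to see what is forced on the Hom-part, and then upgrade this to a statement about actual line bundles using Lemma \ref{lem:pbeta}. The first observation is that every isogeny $\varphi\colon B\to A$ can be written as $h\cdot\psi+k\cdot\psi\omega$ by our choice of generators, and the existence and uniqueness of the pair $(h,k)$ is automatic from the results of Paragraph \ref{subsec:NUMprod}.

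The key computation is the following. Using \eqref{eq:absigma} we have $(1\times\mu)\circ\sigma = t_{(\epsilon,0)}\circ(1\times\mu\omega)$, where $t_{(\epsilon,0)}$ denotes the translation on $A\times A$. The standard see-saw argument applied to the normalized Poincar\'e bundle $\sP_A$ shows that
\[
t_{(\epsilon,0)}^*\sP_A\;\simeq\;\sP_A\otimes \mathrm{pr}_2^* P_\epsilon,
\]
(the first factor correction is trivial because $\sP_A|_{A\times\{y\}}\simeq P_y$ is translation invariant). Pulling back along $1\times\mu\omega$ yields
\[
\sigma^*\bigl((1\times\mu)^*\sP_A\bigr)\;\simeq\;(1\times\mu\omega)^*\sP_A\otimes p_B^*(\mu\omega)^*P_\epsilon.
\]
Combining with the multiplicativity $L(\sO,\sO,\mu)\otimes L(\sO,\sO,-\mu\omega)\simeq L(\sO,\sO,\mu-\mu\omega)$ from Paragraph \ref{subsec:NUMprod}, I obtain the clean formula
\begin{equation}\label{eq:planformula}
(1-\sigma^*)\bigl((1\times\mu)^*\sP_A\bigr)\;\simeq\;(1\times(\mu-\mu\omega))^*\sP_A \otimes p_B^*(\mu\omega)^*P_\epsilon^{-1}.
\end{equation}

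The next step is to compute $\mu\mapsto \mu-\mu\omega$ on the generators. Since $\omega^2=-1_B$, writing $\mu=h_0\psi+k_0\psi\omega$ gives $\mu\omega=-k_0\psi+h_0\psi\omega$, hence
\[
\mu-\mu\omega\;=\;(h_0+k_0)\psi+(k_0-h_0)\psi\omega.
\]
The image of the integer matrix $\left(\begin{smallmatrix}1&1\\-1&1\end{smallmatrix}\right)$ acting on $\Z^2$ consists precisely of those pairs $(h,k)$ with $h+k$ even (equivalently, $h\equiv k\bmod 2$). This gives the parity condition in the statement.

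Putting these together, the two directions follow. For sufficiency, assume $h+k$ is even, so there exists $\mu=h_0\psi+k_0\psi\omega$ with $\mu-\mu\omega=\varphi$. Then \eqref{eq:planformula} rewrites as
\[
(1\times\varphi)^*\sP_A\;\simeq\;(1-\sigma^*)\bigl((1\times\mu)^*\sP_A\bigr)\otimes p_B^*(\mu\omega)^*P_\epsilon,
\]
and the last factor lies in $\operatorname{Im}(1-\sigma^*)$ by Lemma \ref{lem:pbeta}, so $(1\times\varphi)^*\sP_A$ does too. Conversely, if $(1\times\varphi)^*\sP_A\in\operatorname{Im}(1-\sigma^*)$, take Chern classes: one shows that the Hom-part of $c_1((1-\sigma^*)L(L_A,L_B,\mu))$ equals $l(0,0,\mu-\mu\omega)$, using that $L_A\otimes t_\epsilon^*L_A^{-1}$ and $L_B\otimes\omega^*L_B^{-1}$ lie in $\Pic^0$ (the latter because $\omega$ fixes the origin of $B$). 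Hence the Chern class $l(0,0,\varphi)=l(0,0,h\psi+k\psi\omega)$ must be in the image of $\mu\mapsto\mu-\mu\omega$, forcing $h+k$ to be even by the parity analysis above.

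The main potential obstacle is keeping proper track of the $\Pic^0$ correction terms that appear when translating the Poincar\'e bundle and when pulling back through $\omega$; this is why it is convenient to perform the see-saw computation once, cleanly package the result as in \eqref{eq:planformula}, and only then invoke Lemma \ref{lem:pbeta} to absorb the residual Hom-part-free factors.
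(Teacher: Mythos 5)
Your proof is correct and follows essentially the same route as the paper: both identify $(1-\sigma^*)(1\times\mu)^*\sP_A$ with $(1\times\mu\circ(1_B-\omega))^*\sP_A$ up to a factor in $p_B^*\Pic^0(B)$, characterize the image of $\mu\mapsto\mu\circ(1_B-\omega)$ on $\Hom(B,A)$ by the parity of $h+k$ using $\omega^2=-1_B$, and absorb the residual $p_B^*\Pic^0(B)$ term via Lemma \ref{lem:pbeta}. Your write-up is in fact slightly more explicit than the paper's in the see-saw computation of $t_{(\epsilon,0)}^*\sP_A$ and in the Chern-class argument for the "only if" direction, which the paper leaves implicit.
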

\begin{proof}
Let $T:\operatorname{Hom}(B,A)\rightarrow \Hom(B,A)$ the linear operator obtained by pre composing by $(1_B-\omega)$. Then, using that $\omega^2=-1_B$, it is not difficult to show that an isogeny $\varphi$ as in the statement is in the image of $T$ if, and only if, $h+k$ is even. Hence if $h+k$ is not even, $(1\times\varphi)^*\sP_A\notin \operatorname{Im}(1-\sigma^*)$.\par
Suppose now that $\varphi=h\cdot\psi+k\cdot\psi\circ\omega$ with $h+k$ an even number. Then, by the above argument, we can find an isogeny $\gamma:B\rightarrow A$ such that $\varphi=\gamma\circ(1_B-\omega).$ Then we have 
\begin{equation*}
\begin{aligned}
(1-\sigma^*)(1\times\gamma)^*\sP_A &\simeq (1\times\gamma\circ(1_B-\omega))^*\sP_A\otimes p_B^*\omega^*\gamma^*P_{\epsilon}^{-1}\\
&\simeq(1\times \varphi)^*\sP_A\otimes p_B^*\omega^*\gamma^*P_{\epsilon}^{-1}.
\end{aligned}
\end{equation*}
By Lemma \ref{lem:pbeta}, elements of the form $p_B^*\beta$ with $\beta\in \Pic^0(B)$ are in the image of $(1-\sigma^*)$, so we conclude that $(1\times\varphi)^*\sP_A\in \operatorname{Im}(1-\sigma^*)$.
\end{proof}
\begin{rmk}\label{rmk:oneclass}
Observe that this Lemma implies easily that the quotient $\operatorname{Hom}(B,A)/\operatorname{Im}(1-\sigma^*)$, where we are identifying $\operatorname{Hom}(B,A)$ with the corresponding subgroup of $\operatorname{Num}(A\times B)$, is  cyclic generated by the coset $(1_A\times\psi)^*\sP_A+\operatorname{Im}(1-\sigma^*)$.
\end{rmk}
Now we are ready to start studying the kernel for the Brauer map $\pi_{\operatorname{Br}}:\operatorname{Br}(S)\rightarrow\operatorname{Br}(X)$.  Our main result is the following
\begin{thm}\label{thm:type3}
Let $S$ is a bielliptic surface of type 3 with canonical cover $A\times B$ such that $A$ and $B$ are isogenous. Then the Brauer map to the canonical cover is identically zero if, and only if, $(1_B+\omega)^*\psi^*{P_{2\epsilon}}$ is trivial
\end{thm}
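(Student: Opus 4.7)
The plan is to apply Beauville's Proposition \ref{prop:Beauville} to identify $\operatorname{Ker}(\pi_{\operatorname{Br}}) \simeq \operatorname{Ker}(\operatorname{Nm})/(1-\sigma^*)\Pic(X)$, and then exploit that $\operatorname{Br}(S) \simeq \Z/2\Z$: triviality of the Brauer map is equivalent to the existence of a nontrivial class in this quotient. The core computational ingredient is to evaluate $\pi^*\operatorname{Nm}(\mathcal{L}_\varphi)$, where $\mathcal{L}_\varphi := (1_A \times \varphi)^*\sP_A$ for an isogeny $\varphi:B \to A$. Using $\sigma(x,y) = (x+\epsilon, \omega(y))$ together with the see-saw identity $(t_a \times 1)^*\sP_A \simeq \sP_A \otimes p_2^*P_a$, I would compute each pullback $(\sigma^k)^*\mathcal{L}_\varphi$ and assemble them via \eqref{eq:pullbackofnorm}. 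Exploiting $\omega^2 = -1_B$, $P_{-2\epsilon} \simeq P_{2\epsilon}$, and $P_\epsilon^{\otimes 2} \simeq P_{2\epsilon}$ (all consequences of $2\epsilon$ being $2$-torsion), the formula should collapse to
\[
\pi^*\operatorname{Nm}(\mathcal{L}_\varphi) \simeq p_B^*\left[ (1_B+\omega)^*\varphi^*P_{2\epsilon}\right].
\]

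For the ``if'' direction, I take $\varphi = \psi$. Assuming $(1_B+\omega)^*\psi^*P_{2\epsilon}$ is trivial, the displayed formula gives $\pi^*\operatorname{Nm}(\mathcal{L}_\psi) \simeq \sO_X$, so $\operatorname{Nm}(\mathcal{L}_\psi) \in \operatorname{Ker}\pi^* = \langle\omega_S\rangle \subset \Pic^0(S)$. The $\Pic^0$-trick (Remark \ref{rmk:trick}) then produces $\gamma \in \Pic^0(S)$ with $\mathcal{L}_\psi \otimes \pi^*\gamma \in \operatorname{Ker}(\operatorname{Nm})$. Since $\pi^*\gamma$ is numerically trivial, the numerical class of $\mathcal{L}_\psi \otimes \pi^*\gamma$ is $l(0,0,\psi)$, which corresponds to $(h,k) = (1,0)$: Lemma \ref{lem:image3} therefore forbids it from lying in $(1-\sigma^*)\Pic(X)$. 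This yields a nontrivial class and forces $\pi_{\operatorname{Br}} = 0$.

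For the ``only if'' direction, suppose $L$ represents a nontrivial element of $\operatorname{Ker}(\operatorname{Nm})/(1-\sigma^*)\Pic(X)$. By Lemma \ref{lem:potentialclasses}, $c_1(L) = l(0,0,\varphi)$ with $\varphi = h\psi + k\psi\omega$. The parity $h+k$ must be odd, else Lemma \ref{lem:image3} puts $\mathcal{L}_\varphi$ inside $(1-\sigma^*)\Pic(X)$; then $L \otimes \mathcal{L}_\varphi^{-1} \in \Pic^0(X)\cap\operatorname{Ker}(\operatorname{Nm})$, which is $(1-\sigma^*)$-trivial by Proposition \ref{prop:brabpic0}, contradicting non-triviality of $[L]$. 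Writing $L = \mathcal{L}_\varphi \otimes p_A^*\alpha_A \otimes p_B^*\alpha_B$, Lemma \ref{lem:pbeta} gives $\operatorname{Nm}(p_B^*\alpha_B) = \sO_S$, while $p_A^*\alpha_A$ is $\sigma^*$-invariant (translations act trivially on $\Pic^0$), so $\pi^*\operatorname{Nm}(p_A^*\alpha_A) \simeq p_A^*\alpha_A^{\otimes 4}$. A short parity computation shows that, for $h+k$ odd, $(\varphi+\varphi\omega)^*P_{2\epsilon} \simeq (1_B+\omega)^*\psi^*P_{2\epsilon}$ in $\Pic^0(B)$ (both $h-k$ and $h+k$ being odd, and $P_{2\epsilon}$ of order $2$). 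Combining,
\[
\sO_X \simeq \pi^*\operatorname{Nm}(L) \simeq p_B^*(1_B+\omega)^*\psi^*P_{2\epsilon} \otimes p_A^*\alpha_A^{\otimes 4},
\]
and the direct-sum decomposition $\Pic^0(X) = p_A^*\Pic^0(A) \oplus p_B^*\Pic^0(B)$ (valid since $X \simeq A \times B$ for type $3$) forces both tensor factors to vanish separately, giving $(1_B+\omega)^*\psi^*P_{2\epsilon} \simeq \sO_B$ as required. The main obstacle is the careful tracking of Poincaré-bundle pullbacks under all four powers of $\sigma$, together with the parity-reduction linking a general $\varphi$ to the distinguished case $\varphi = \psi$.
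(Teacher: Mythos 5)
Your proposal is correct and follows essentially the same route as the paper: the same norm formula $\pi^*\operatorname{Nm}((1\times\varphi)^*\sP_A\otimes p_A^*\alpha\otimes p_B^*\beta)\simeq p_A^*\alpha^{\otimes 4}\otimes p_B^*(1+\omega)^*\varphi^*P_{2\epsilon}$, the $\Pic^0$-trick plus Lemma \ref{lem:image3} for sufficiency, and Lemmas \ref{lem:potentialclasses}, \ref{lem:image3} and Proposition \ref{prop:brabpic0} to force $h+k$ odd for necessity. The only cosmetic difference is that where the paper reduces $L$ modulo $\operatorname{Im}(1-\sigma^*)$ to a bundle numerically equivalent to $(1\times\psi)^*\sP_A$ via Remark \ref{rmk:oneclass}, you instead verify directly by a parity computation that $(1+\omega)^*\varphi^*P_{2\epsilon}\simeq(1+\omega)^*\psi^*P_{2\epsilon}$ when $h+k$ is odd; the two steps are equivalent.
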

\begin{proof}
For any isogeny $\varphi:B\rightarrow A$, $\alpha\in\Pic^0(A)$ and $\beta\in\Pic^0(B)$, using that the norm of $p_B^*\beta$ is trivial by Lemma \ref{lem:pbeta}, we have that 
\begin{equation}\label{eq:type3}
\begin{aligned}
    \pi^*\operatorname{Nm}((1\times\varphi)^*\sP_A\otimes p_A^*\alpha\otimes p_B^*\beta) &\simeq (1\times \varphi)^*\sP_A\otimes p_A^*\alpha\otimes\\
     & \quad\;\: (1\times \varphi\circ \omega)^*\sP_A\otimes p_B^*\omega^*\varphi^*P_\epsilon\otimes p_A^*\alpha\otimes\\
     & \quad\;\: (1\times -\varphi)^*\sP_A\otimes p_B^*(-1_B)^*\varphi^*P_{2\epsilon}\otimes p_A^*\alpha\otimes\\
     & \quad\;\: (1\times -\varphi\circ\omega)^*\sP_A\otimes p_B^*(-\omega)^*\varphi^*P_{3\epsilon}\otimes p_A^*\alpha\otimes\\
     &\simeq p_A^*\alpha^{\otimes 4}\otimes p_B^*(1+\omega)^*\varphi^*P_{2\epsilon}.
    \end{aligned}
\end{equation}
Suppose that $(1_B+\omega)^*\psi^*P_{2\epsilon}\simeq\sO_B$. Since $P_{2\epsilon}$ is a two torsion point, this is equivalent to asking that $(1_B-\omega)^*\psi^*P_{2\epsilon}$ is also trivial. Then \eqref{eq:type3} implies that the norms of $(1\times\psi)^*\sP_A$ and of $(1\times\psi\circ\omega)^*\sP_A$ lie in $\Pic^0(S)$. Then using the $\Pic^0$-trick (Remark \ref{rmk:trick}) and Lemma \ref{lem:image3} we can find a non zero class in $\operatorname{Ker}\operatorname{Nm}/\operatorname{Im}(1-\sigma^*),$ and the Brauer map is trivial.\par
Conversely, let $L$ be a line bundle defining a nontrivial class in $\operatorname{Ker}\operatorname{Nm}/\operatorname{Im}(1-\sigma^*)$. Then as we did in the case of type 1 surfaces, we can write
$$
L\simeq (1\times h\cdot\psi+k\cdot\psi\circ\omega)^*\sP_A\otimes p_A^*\alpha\otimes p_B^*\beta
$$
with $\alpha$ and $\beta$ in $\Pic^0(A)$ and $\Pic^0(B)$.  Lemma \ref{lem:type3} implies that the integer $h+k$ is odd or we would have that $p_A^*\alpha$ is in the kernel of the norm map, and consequently, by Proposition \ref{prop:brabpic0}, $L \in \operatorname{Im}(1-\sigma^*$) . Thus we can write
$$L\simeq M\otimes M'$$
where $M'$ is in the image of $1-\sigma^*,$  and $M$ is numerically equivalent to $(1\times\psi)^*\sP_A$ (this is a consequence of Lemma \ref{lem:image3} and Remark \ref{rmk:oneclass}).
We deduce that $M$ is in the kernel of the norm map. But then \eqref{eq:type3} implies that $
(1+\omega)^*\psi^*P_{2\epsilon}$ is trivial, proving the statement.
\end{proof}
\begin{ex}
Suppose that $A\simeq B$, so we can take $\psi=1_A$. If $P_{2\epsilon}$ is a fixed point of $\omega$, then we have that $\sP_A$ yields a nontrivial element in $\operatorname{Ker}\operatorname{Nm}/\operatorname{Im}(1-\sigma^*).$ Conversely, if $P_{2\epsilon}$ is not a fixed point of $\omega$ we will have that the Brauer map is injective.
\end{ex}
\subsubsection{Bielliptic surfaces of type 5}  Let $S$ be a bielliptic surface of type 5. We will solve this case in a similar fashion as for bielliptic surfaces of type 3. In the type-5 case, the canonical cover is isomorphic to an abelian surface $A\times B$ with $j(B)=0$. As already seen, $B$ admits an automorphism $\rho$ of order 3 such that $\rho^2+\rho+1=0$. Again, thanks to Theorem \ref{thm:injective} we need to study only the case in which $A$ and $B$ are isogenous. Also in this case, by the results of \ref{subsec:NUMprod}, there is generating isogeny $\psi:B\rightarrow A$ such that 
$$
\operatorname{Num}(X)=\left<l(1,0,0),\;l(0,1,0)\;l(0,0,\psi),\; l(0,0,\psi\circ\rho)\right>.
$$ With this notation, we prove a statement analogous to Lemma \ref{lem:image3}:

\begin{lemma}\label{lem:image5} 
Let $\varphi:B\rightarrow A$ and isogeny. Then there are two integers $h$ and $k$
 such that $\varphi=h\cdot\psi+k\cdot\psi\circ\rho$. If $h+k$ is not divisible by 3, then $(1\times\varphi)^*\sP_A\notin \operatorname{Im}(1-\sigma^*)$. Conversely if 3 divides $h+k$, then $(1\times\varphi)^*\sP_A\otimes p_B^*\beta\in \operatorname{Im}(1-\sigma^*)$, for every $\beta\in\Pic^0(B)$.\end{lemma}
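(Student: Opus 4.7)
The strategy will mirror closely the proof of Lemma \ref{lem:image3}, replacing the order-$4$ automorphism $\omega$ (satisfying $\omega^2+1=0$) by the order-$3$ automorphism $\rho$ (satisfying $\rho^2+\rho+1_B=0$), and the parity condition by divisibility by $3$. My first step would be to introduce the $\Z$-linear operator
$$T\colon \Hom(B,A)\longrightarrow \Hom(B,A),\qquad T(\gamma)=\gamma\circ(1_B-\rho),$$
and to describe its image in terms of the $\Z$-basis $\{\psi,\psi\circ\rho\}$. Using $\psi\rho^2=\psi(-1_B-\rho)=-\psi-\psi\rho$ one computes directly
$$T(a\psi+b\psi\rho)=(a+b)\psi+(2b-a)\psi\rho.$$
Hence $\varphi=h\psi+k\psi\rho$ lies in $\operatorname{Im}(T)$ if and only if the system $h=a+b$, $k=2b-a$ has an integer solution, which happens precisely when $3\mid h+k$ (in which case $b=(h+k)/3$, $a=(2h-k)/3$). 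One sanity check: the matrix of $T$ in this basis has determinant $3$, matching $[\Hom(B,A):\operatorname{Im} T]=3$.

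The second ingredient is the explicit formula for the action of $\sigma^*$ on Poincar\'e-type bundles. Since $\sigma(x,y)=(x+\eta,\rho(y))$ by \eqref{eq:absigma}, the same see-saw argument that yields the identity \eqref{eq:important type 1} and the analogous formula used in the proof of Lemma \ref{lem:image3} gives, for every isogeny $\gamma\colon B\to A$,
$$(1-\sigma^*)(1\times\gamma)^*\sP_A\;\simeq\;\bigl(1\times\gamma\circ(1_B-\rho)\bigr)^*\sP_A\otimes p_B^*\bigl((\gamma\circ\rho)^*P_\eta\bigr)^{-1}.$$
The correcting factor $p_B^*((\gamma\rho)^*P_\eta)^{-1}$ lies in $p_B^*\Pic^0(B)$, and by Lemma \ref{lem:pbeta} this subgroup is already contained in $\operatorname{Im}(1-\sigma^*)$. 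From this identity I would extract both directions.

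For the ``if $3\mid h+k$'' direction, solve $\varphi=\gamma\circ(1_B-\rho)$ with $\gamma=a\psi+b\psi\rho$ as above. The displayed formula then yields
$$(1\times\varphi)^*\sP_A\;\simeq\;(1-\sigma^*)(1\times\gamma)^*\sP_A\otimes p_B^*\bigl((\gamma\rho)^*P_\eta\bigr),$$
and since the last factor is in $p_B^*\Pic^0(B)\subset\operatorname{Im}(1-\sigma^*)$, we conclude $(1\times\varphi)^*\sP_A\in\operatorname{Im}(1-\sigma^*)$. Twisting by any $p_B^*\beta$ with $\beta\in\Pic^0(B)$ preserves this, again by Lemma \ref{lem:pbeta}.

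For the ``only if'' part I would argue on Chern classes, as in the type $3$ case. For an arbitrary line bundle $M=L(M_A,M_B,\gamma)$ on $X$, the identity above together with the computation $c_1(\sigma^*M)=l(\deg M_A,\deg M_B,\gamma\rho)$ gives
$$c_1\bigl((1-\sigma^*)M\bigr)=l\bigl(0,0,\gamma-\gamma\rho\bigr)=l\bigl(0,0,T(\gamma)\bigr),$$
so the Hom-part of any class in $c_1(\operatorname{Im}(1-\sigma^*))$ lies in $l(0,0,\operatorname{Im} T)$. Therefore, if $(1\times\varphi)^*\sP_A\in\operatorname{Im}(1-\sigma^*)$ then $\varphi\in\operatorname{Im} T$, which forces $3\mid h+k$; the contrapositive is the first assertion. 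The only real subtlety is keeping track of the $\Pic^0(B)$-correction term and invoking Lemma \ref{lem:pbeta} at the right moment; the algebraic content is entirely encoded in the $\Z[\rho]$-module computation of $\operatorname{Im} T$ above.
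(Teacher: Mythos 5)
Your proposal is correct and follows essentially the same route as the paper, which simply declares the argument "completely analogous" to Lemma \ref{lem:image3} after noting that the image of $T(\gamma)=\gamma\circ(1_B-\rho)$ consists exactly of the $h\cdot\psi+k\cdot\psi\circ\rho$ with $3\mid h+k$. Your explicit computation of $\operatorname{Im} T$ via $\rho^2=-1_B-\rho$, the see-saw identity for $(1-\sigma^*)(1\times\gamma)^*\sP_A$, and the Chern-class argument for the "only if" direction are precisely the details the paper leaves implicit.
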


\begin{proof} The argument is completely analogous to the proof of Lemma \ref{lem:image3}, after observing that, if $T:\operatorname{Hom}(B,A)\rightarrow\operatorname{Hom}(B,A)$ is the operator defined by pre composing with $1_B-\rho$, then the image of $T$ are exactly the homomorphism $h\cdot\psi+k\cdot\psi\circ\rho$ such that 3 divides $k+h$.  
\end{proof}
\begin{rmk}\label{rmk:oneclass2} This Lemma implies easily that the quotient of the Hom-part of $\operatorname{Num}(A\times B)$ by the action of $1-\sigma^*$ is isomorphic to $\Z/3\Z$ with elements  $(1_A\times\psi)^*\sP_A+\operatorname{Im}(1-\sigma^*)$ and $(1_A\times\psi+\psi\circ\rho)^*\sP_A+\operatorname{Im}(1-\sigma^*)=(1_A\times2\cdot\psi)^*\sP_A+\operatorname{Im}(1-\sigma^*)$.
\end{rmk}
We will also need the following statement:
\begin{lemma}\label{peta}
Let $B$ an elliptic curve with $j$-invariant 0 and $\beta$ an element $\Pic^0(B)$. Consider the following line bundles
$$
P_1:=(2\cdot\rho+1_B)^*\beta,\quad,P_{\rho}:=(2\cdot\rho+1_B)^*\rho^*\beta,\quad\text{and}\quad P_{1+\rho}:= (2\cdot\rho+1_B)^*(1_B+\rho)^*\beta.
$$
If one of them is trivial then they are all trivial.
\end{lemma}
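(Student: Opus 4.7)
The strategy is to interpret the three line bundles inside the $\End(B)$-module structure on $\Pic^0(B)$. Recall that $\End(B)\simeq\Z[\rho]$ with the relation $\rho^2+\rho+1=0$, that $\rho^*$ is an automorphism of $\Pic^0(B)$ of order three, and that pullback along a sum of endomorphisms of $B$ corresponds to tensor product of line bundles: for every $\beta\in\Pic^0(B)$ and $f,g\in\End(B)$ one has $(f+g)^*\beta\simeq f^*\beta\otimes g^*\beta$. In particular, the ring identity $\rho^2+\rho+1=0$ in $\End(B)$ yields the fundamental cyclic relation
\begin{equation*}
\beta\otimes\rho^*\beta\otimes (\rho^2)^*\beta\;\simeq\;\sO_B.
\end{equation*}

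First I would observe that $P_\rho\simeq \rho^*P_1$, since the endomorphisms $\rho$ and $2\rho+1_B$ commute, so $\rho^*\circ (2\rho+1_B)^*=(2\rho+1_B)^*\circ\rho^*$. As $\rho^*$ is an automorphism of $\Pic^0(B)$, this already gives the equivalence $P_1\simeq\sO_B\Leftrightarrow P_\rho\simeq\sO_B$. Next, using $(1_B+\rho)^*\beta\simeq\beta\otimes\rho^*\beta$, one computes
\begin{equation*}
P_{1+\rho}\;\simeq\;(2\rho+1_B)^*(\beta\otimes\rho^*\beta)\;\simeq\;P_1\otimes P_\rho,
\end{equation*}
so whenever $P_1$ (equivalently $P_\rho$) is trivial, $P_{1+\rho}$ is trivial as well.

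It then remains to check that the triviality of $P_{1+\rho}$ alone implies the triviality of $P_1$ and $P_\rho$. The key calculation is the identity $(2\rho+1_B)(1_B+\rho)=\rho-1_B$ in $\Z[\rho]$, which follows at once from $\rho^2=-1-\rho$. This lets us rewrite $P_{1+\rho}\simeq (\rho-1_B)^*\beta\simeq \rho^*\beta\otimes\beta^{-1}$. Hence $P_{1+\rho}\simeq\sO_B$ forces $\rho^*\beta\simeq\beta$, and applying $\rho^*$ once more gives $(\rho^2)^*\beta\simeq\beta$ as well. Substituting into the cyclic relation yields $\beta^{\otimes 3}\simeq\sO_B$, so $P_1\simeq \beta\otimes(\rho^*\beta)^{\otimes 2}\simeq\beta^{\otimes 3}\simeq\sO_B$, and then $P_\rho\simeq\rho^*P_1\simeq\sO_B$ by the first step.

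The only subtle point, and the place I would be most careful in the actual write-up, is to justify cleanly that the pullback action indeed turns ring identities in $\Z[\rho]$ into line bundle identities on $\Pic^0(B)$; this amounts to checking that the assignment $\varphi\mapsto\varphi^*$ defines a (anti-)ring homomorphism $\End(B)\to\End(\Pic^0(B))$, which is standard since $\End(B)$ is commutative. Once this is in place, the whole lemma reduces to the elementary manipulations above.
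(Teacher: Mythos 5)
Your proof is correct and follows essentially the same route as the paper: both rest on the identity $(2\cdot\rho+1_B)\circ(1_B+\rho)=\rho-1_B$ in $\End(B)\simeq\Z[\rho]$ together with the additivity of pullback on $\Pic^0(B)$. The only difference is cosmetic: where the paper applies $\rho^*$ once more to get $\rho^*P_{1+\rho}\simeq P_1^{-1}$ directly, you first deduce $\rho^*\beta\simeq\beta$ and $\beta^{\otimes 3}\simeq\sO_B$ and then conclude; both are valid.
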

\begin{proof}
Observe first that $(2\cdot\rho+1_B)^*\rho^*\beta\simeq \rho^*(2\cdot\rho+1_B)^*\beta$. Since $\rho$ is an automorphism the triviality of $P_\rho$ is equivalent to the triviality of $P_1$. In addition as $P_{1+\rho}\simeq P_1\otimes P_\rho$ we have that if $P_1$ and $P_\rho$ are both trivial, then also $P_{1+\rho}$ is trivial. It remains to show that if $P_{1+\rho}\simeq\sO_B,$ then also $P_1$ and $P_\rho$ are trivial. We not that $P_{1+\rho}\simeq\sO_B$ if, and only if,  $\rho^*P_{1+\rho}\simeq\sO_B$. On the other side we have
\begin{align*}
    \rho^*P_{1+\rho} &\simeq \rho^*(2\cdot\rho+1_B)^*(1_B+\rho)^*\beta\simeq \rho^*(\rho-1_B)^*\beta\simeq(-2\cdot\rho-1_B)^*\beta\simeq P_1^{-1}.
\end{align*}
We conclude that the triviality of $P_{1+\rho}$ is equivalent to the triviality of $P_1$ as required by the statement.
\end{proof}
Now we are ready to prove the main result of this paragraph:
\begin{thm}\label{thm:type5}
Let $S$ be an bielliptic surface of type 5 such that the two elliptic curves $A$ and $B$ are isogenous. Let $\psi$ be a generating isogeny, then we have that the Brauer map $\pi_{\operatorname{Br}}:\operatorname{Br}(S)\rightarrow\operatorname{Br}(A\times B)$ is trivial if, and only if, the line bundle
$(2\cdot\rho+1_B)^*\psi^*P_{\eta}\simeq \sO_B$.
\end{thm}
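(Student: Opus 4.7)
The plan is to adapt the argument used to prove Theorem \ref{thm:type3}, exploiting the fact that $\operatorname{Br}(S)\simeq\Z/3\Z$ has prime order, so by Proposition \ref{prop:Beauville} the Brauer map is either injective or identically zero depending only on whether $\operatorname{Ker}\operatorname{Nm}/\operatorname{Im}(1-\sigma^*)$ vanishes. It therefore suffices to produce a single non-trivial class in this quotient exactly when $(2\rho+1_B)^*\psi^*P_\eta\simeq \sO_B$.

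The core of the proof is the explicit computation of $\pi^*\operatorname{Nm}$ on a line bundle
\begin{equation*}
L\;=\;(1\times\varphi)^*\sP_A\otimes p_A^*\alpha\otimes p_B^*\beta,\qquad \varphi\;=\;h\psi+k\,\psi\circ\rho,\quad \alpha\in\Pic^0(A),\;\beta\in\Pic^0(B).
\end{equation*}
Applying \eqref{eq:pullbackofnorm} with $\sigma$ of order 3 from \eqref{eq:absigma}, together with the see-saw identity $t_{(m\eta,0)}^*\sP_A\simeq \sP_A\otimes p_2^*P_{m\eta}$, and evaluating $L\otimes\sigma^*L\otimes(\sigma^2)^*L$ term by term, the defining relation $1_B+\rho+\rho^2=0$ annihilates both the $\sP_A$-contribution (because $\varphi+\varphi\rho+\varphi\rho^2=0$) and the $\beta$-contribution, leaving
\begin{equation*}
\pi^*\operatorname{Nm}(L)\;\simeq\; p_A^*\alpha^{\otimes 3}\otimes p_B^*(\rho+2\rho^2)^*\varphi^*P_\eta.
\end{equation*}
Substituting $\rho^2=-1_B-\rho$, the endomorphism $\rho+2\rho^2$ of $B$ equals $-2_B-\rho=-(2\rho+1_B)\circ\rho$, so for $\varphi=\psi$ the $p_B^*$-factor coincides, up to inversion, with the bundle $P_\rho$ appearing in Lemma \ref{peta} applied with $\beta=\psi^*P_\eta$. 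By that lemma, it is trivial if and only if $(2\rho+1_B)^*\psi^*P_\eta\simeq\sO_B$.

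Granted the triviality condition, the computation above identifies $\operatorname{Nm}((1\times\psi)^*\sP_A)$ as an element of $\operatorname{Ker}\pi^*\subseteq\Pic^0(S)$, and the $\Pic^0$-trick (Remark \ref{rmk:trick}) produces $\gamma\in\Pic^0(X)$ with $(1\times\psi)^*\sP_A\otimes\gamma\in\operatorname{Ker}\operatorname{Nm}$; since its numerical class has $h+k=1$, not divisible by 3, Lemma \ref{lem:image5} guarantees that it is not in $\operatorname{Im}(1-\sigma^*)$, giving a non-zero class.

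Conversely, let $L$ represent a non-zero class. Lemma \ref{lem:potentialclasses} yields $c_1(L)=l(0,0,\varphi)$ with $\varphi=h\psi+k\psi\rho$; Lemma \ref{lem:image5} combined with Proposition \ref{prop:brabpic0} forces $3\nmid h+k$, and Remark \ref{rmk:oneclass2} lets me reduce, modulo $\operatorname{Im}(1-\sigma^*)$, to $\varphi\in\{\psi,\,2\psi\}$. The vanishing of $\pi^*\operatorname{Nm}(L)$ then forces, via the K\"unneth decomposition $\Pic^0(A\times B)\simeq\Pic^0(A)\oplus\Pic^0(B)$, the identity $(\rho+2\rho^2)^*\varphi^*P_\eta\simeq\sO_B$; since $\psi^*P_\eta$ is 3-torsion, the triviality condition for $\varphi=2\psi$ is equivalent to the one for $\varphi=\psi$, so both cases reduce via Lemma \ref{peta} to $(2\rho+1_B)^*\psi^*P_\eta\simeq\sO_B$. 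The main obstacle is the norm computation itself, in particular recognising the naturally appearing endomorphism $\rho+2\rho^2$ as $-(2\rho+1_B)\circ\rho$ so that Lemma \ref{peta} converts the output into the normalised form of the statement; once this step is in place, everything else is a direct adaptation of the type 3 proof.
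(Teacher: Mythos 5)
Your proposal is correct and follows essentially the same route as the paper: the same norm computation (the paper's \eqref{eq:type5}, which your $(\rho+2\rho^{2})^*=((2\rho+1_B)\circ\rho)^*$ matches up to the automorphism $\rho$ and a sign, neither of which affects triviality), the same use of the $\Pic^0$ trick with Lemma \ref{lem:image5} for one direction, and the same reduction via Lemmas \ref{lem:potentialclasses}, \ref{lem:image5}, Proposition \ref{prop:brabpic0} and Lemma \ref{peta} for the converse. The only cosmetic differences are your prime-order framing of $\operatorname{Br}(S)$ and your use of the representative $2\psi$ with the $3$-torsion of $P_\eta$ where the paper uses $\psi+\psi\circ\rho$ and Lemma \ref{peta}.
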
 
\begin{proof}
The argument is really similar to what happens for type 3 bielliptic surfaces. We first note that, for any isogeny $\varphi: B\rightarrow A$, and every $\alpha$ and $\beta$ in $\Pic^0(A)$ and $\Pic^0(B)$ respectively, we have that 
\begin{equation}\label{eq:type5}
\begin{aligned}
    \pi^*\operatorname{Nm}((1\times\varphi)^*\sP_A\otimes p_A^*\alpha\otimes p_B^*\beta) &\simeq p_A^*\alpha^{\otimes 3}\otimes p_B^*(2\cdot\rho+1_B)^*\varphi^*P_{\eta}.
 \end{aligned}
\end{equation}
Suppose first that $(2\cdot\rho+1_B)^*\psi^*P_{\eta}$ is trivial. Then \eqref{eq:type5} ensures that the norm of $ M_1:= (1\times\psi)^*\sP_A$ is topologically trivial. By Lemma \ref{lem:image5} we know that no line bundle numerically equivalent to $M_1$ is in the image of $1-\sigma^*$. Thus we use the Remark \ref{rmk:trick} to provide an element in $\operatorname{Ker}\operatorname{Nm}$ inducing a non trivial class in $\operatorname{Ker}\operatorname{Nm}/\operatorname{Im}(1-\sigma^*)$.\par
Conversely, assume that $L$ is a line bundle in $\operatorname{Ker}\operatorname{Nm}$ whose class in $\operatorname{Ker}\operatorname{Nm}/\operatorname{Im}(1-\sigma^*)$ is not trivial. As before we can write
$$L\simeq (1\times h\cdot\psi+k\cdot\psi\circ\rho)^*\sP_A\otimes p_A^*\alpha\otimes p_B^*\beta.$$
We apply Lemma \ref{lem:image5} and write $L\simeq M\otimes M'$ with $M'\in\operatorname{Im}(1-\sigma^*)$ and $M$ a line bundle numerically equivalent to one of the following
\begin{equation}\label{eq:lnbndl5}
M_1 :=(2\cdot\rho+1_B)^*\psi^*P_{\eta},\quad \text{and}\quad M_{1+\rho}:=(2\cdot\rho+1_B)^*(1+\rho)^*\psi^*P_{\eta}.
\end{equation}
Clearly $M$ is in the kernel of the norm map, which, by \eqref{eq:type5} implies that one among the following is trivial:
$$
P_1:=(2\cdot\rho+1_B)^*\psi^*P_\eta,\quad \text{and}\quad P_{1+\rho}:= (2\cdot\rho+1_B)^*(1_B+\rho)^*\psi^*P_\eta.
$$
We conclude by applying Lemma \ref{peta} and deducing that $P_1\simeq\sO_B$.
\end{proof}
\begin{ex}
Suppose that $A\simeq B$. Note that the isogeny $\varphi:=(2\cdot\rho+1_B):B\rightarrow$ has degree 3, and its kernel is contained in $B[3]$ which has order 9. If $\eta$ is in the kernel of $\varphi$ then the bielliptic surface obtained by the action of $\sigma(x,y)=(x+\eta,\rho(y))$ has trivial Brauer map. Otherwise the Brauer map is injective.
\end{ex}

\subsubsection{Bielliptic surfaces of type 2} We kept last the bielliptic surfaces of type two since for them we need an \emph{ad hoc} argument. Let therefore $S$ be a bielliptic surface of type 2 and denote by $X$ its canonical cover. Then $X\simeq A\times B/<t_{(\theta_1,\theta_2)}>$ for two elliptic curves $A$ and $B$ and $\theta_1$ and $\theta_2$ points of order 2 in $A$ and $B$ respectively. Let us fix generators for $\Hom(B,A)$: if $B$ does not have complex multiplication then $\Hom(B,A)=<\psi>$ with $\psi:B\rightarrow A$ an isogeny; otherwise there are two isogenies $\psi_1, \psi_2:B\rightarrow A$ such that $\Hom(B,A)=<\psi_1,\psi_2>.$ Our goal is to prove the following statement.
\begin{thm}\label{thm:type 2}
In the above notation the Brauer map $\pi_{\operatorname{Br}}:\operatorname{Br}(S)\rightarrow\operatorname{Br}(X)$ is not injective if, and only if, one of the following conditions is satisfied:
\begin{enumerate}
    \item the elliptic curve $B$ does not have complex multiplication and  either $\psi(\theta_2)$ is not the identity element of $A$ or $\psi^*P_{\theta_1}$ is not trivial.
    \item the elliptic curve $B$ has complex multiplication and not all of the following elements are the identity element in the elliptic curve they belong to
    $$\left\{\psi_1(\theta_2),\;\psi_2(\theta_2),\;\psi_1^*P_{\theta_1},\psi_2^*P_{\theta_1},\; (\psi_1+\psi_2)(\theta_2),\;(\psi_1+\psi_2)^*(P_{\theta_1})\right\}
    $$
    \end{enumerate}
\end{thm}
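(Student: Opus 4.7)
The plan is to adapt the strategy of Theorems \ref{thm:type1}, \ref{thm:type3}, and \ref{thm:type5}, with extra care due to the fact that here the canonical cover $X$ is itself a non-trivial quotient of $A\times B$, rather than a product. Accordingly, I would pull every line bundle on $X$ back to $A\times B$ via the étale degree-$2$ map $q\colon A\times B\to X$, identifying $\Pic(X)$ with the subgroup of $\Pic(A\times B)$ consisting of line bundles invariant under the translation $t_{(\theta_1,\theta_2)}$, and then perform all the relevant computations on $A\times B$ before descending.

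The first key step is to make the descent condition explicit. Using the translation formulas for the Poincaré bundle $\sP_A$, one finds that $\tilde L = L(L_A,L_B,\varphi)$ is $t_{(\theta_1,\theta_2)}^*$-invariant if and only if $t_{\theta_1}^*L_A\otimes L_A^{-1}\simeq P_{\varphi(\theta_2)}^{-1}$ and $t_{\theta_2}^*L_B\otimes L_B^{-1}\simeq \varphi^* P_{\theta_1}^{-1}$; in particular, for $L_A,L_B$ of degree zero, descent holds precisely when $\varphi$ lies in the subgroup $\Hom_0:=\{\varphi:\varphi(\theta_2)=0,\ \varphi^*P_{\theta_1}\simeq\sO_B\}$. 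A parallel computation, directly analogous to the one behind \eqref{eq:important type 1}, gives
$$\sigma_0^*\tilde L \simeq L(t_\tau^* L_A,\ [-1]^* L_B\otimes\varphi^* P_\tau^{-1},\ -\varphi),$$
so that $\tilde L\otimes\sigma_0^*\tilde L$ has trivial Hom part and numerical class $2(\deg L_A)[B]+2(\deg L_B)[A]$, while $(1-\sigma_0^*)\tilde L$ has Hom part $2\varphi$. On numerical classes, the kernel of $\operatorname{Nm}$ therefore forces both degrees to vanish, restricting $\varphi$ to $\Hom_0$, whereas the image of $1-\sigma^*$ has Hom part $2\Hom_X$, where $\Hom_X\supseteq\Hom_0$ is the larger subgroup consisting of $\varphi$ admitting some lift to $\Pic(X)$.

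Given these formulas, I would carry out a case analysis on the positions of $\psi(\theta_2)$ in $A[2]$ and of $\psi^*P_{\theta_1}$ in $\Pic^0(B)[2]$, and analogously for $\psi_1,\psi_2,\psi_1+\psi_2$ in the CM setting. Proposition \ref{prop:brabpic0} removes the $\Pic^0$ contribution, so we can reason modulo numerical equivalence. When every generator already lies in $\Hom_0$, we have $\Hom_0=\Hom_X=\Hom(B,A)$; the only numerical candidate for a non-trivial class is $[\psi]$, lifted by $(1\times\psi)^*\sP_A$, and a tracking of Chern classes shows that its norm lands in the non-trivial torsion coset of $\NS(S)$ (generated by $[\omega_S]$) rather than in $\Pic^0(S)$, so no $\Pic^0$-twist can trivialize it and the Brauer map is injective. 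Conversely, when some generator fails to lie in $\Hom_0$, an element of $\Hom_X\setminus\Hom_0$ furnishes, after the descent-dictated twist by degree-one line bundles on $A$ and $B$, a line bundle on $X$ whose norm lies in $\Pic^0(S)$; applying Remark \ref{rmk:trick} kills it, and a Lemma \ref{image}-style argument on its Hom part shows the resulting class is not in $\operatorname{Im}(1-\sigma^*)$, giving non-injectivity.

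The main obstacle will be the book-keeping of descent, since the naïve building block $(1\times\varphi)^*\sP_A$ used in the other cases generally fails to be $t_{(\theta_1,\theta_2)}^*$-invariant in type $2$. One must either restrict to the subgroup $\Hom_0$ or correct by degree-$1$ line bundles on $A$ or $B$, which in turn injects additional terms into the norm computation. Keeping track of these corrections together with the $\NS(S)$-torsion contributions, and matching them with the precise descent-type conditions $\psi(\theta_2)\stackrel{?}{=}0$ and $\psi^*P_{\theta_1}\stackrel{?}{=}\sO_B$ appearing in the theorem, is where the bulk of the technical work lies.
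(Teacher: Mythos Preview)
Your overall framework—pull back to $A\times B$, characterise $\Sigma$-invariance, and reduce modulo $\operatorname{Im}(1-\sigma^*)$ and $\Pic^0$—matches the paper's, but both directions of your argument have real gaps that the paper handles by quite different devices.

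\textbf{Non-injectivity (conditions hold).} You want to produce a class in $\Ker\operatorname{Nm}\setminus\operatorname{Im}(1-\sigma^*)$ from ``an element of $\Hom_X\setminus\Hom_0$''. But this set can be empty even when the hypotheses of the theorem hold. Your own descent computation shows that $\varphi\in\Hom_X$ forces $\varphi(\theta_2)\in\{0,\theta_1\}$ and $\hat\varphi(\theta_1)\in\{0,\theta_2\}$; one can arrange, say, $\psi_1(\theta_2)=0$, $\hat\psi_1(\theta_1)\notin\{0,\theta_2\}$, $\psi_2(\theta_2)\notin\{0,\theta_1\}$, $\hat\psi_2(\theta_1)=0$, and then none of $\psi_1,\psi_2,\psi_1+\psi_2$ lies in $\Hom_X$ while $\Hom_X=\Hom_0=2\Hom(B,A)$. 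The paper sidesteps this entirely: instead of descending $(1\times\psi_1)^*\sP_A$, it takes its \emph{norm} $L:=\operatorname{Nm}_\phi((1\times\psi_1)^*\sP_A)\in\Pic(X)$, which always exists. To show $\operatorname{Nm}_{\pi_S}(L)\in\Pic^0(S)$ the paper uses functoriality of the norm through the intermediate type-$1$ cover $\tilde S$ and the computation \eqref{eq:tau13} from Section~\ref{sec:biellipticcover}; this factorisation is the key input you are missing.

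\textbf{Injectivity (conditions fail).} Your claimed mechanism—that the norm of the lift of $(1\times\psi)^*\sP_A$ lands in the nontrivial torsion coset of $\NS(S)$—is not what happens, and the parenthetical ``generated by $[\omega_S]$'' is a confusion: $\omega_S\in\Pic^0(S)$, so $[\omega_S]=0$ in $\NS(S)$. The paper's obstruction is not a torsion Chern class at all; it is the line bundle $\psi_i^*P_\tau\in\Pic^0(B)$, where $\tau$ is the \emph{other} $2$-torsion point of $A$ defining $\sigma$ (cf.\ \eqref{eq:absigmatype2}). One computes $\phi^*\pi_S^*\operatorname{Nm}_{\pi_S}(L_i\otimes\alpha)\simeq p_A^*\alpha_1^{\otimes 2}\otimes p_B^*\psi_i^*P_\tau$, and the point is that the $p_B^*$-factor cannot be killed by varying $\alpha$. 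Its nonvanishing is where the hypothesis is used: since $\psi_i^*P_{\theta_1}=0$ by assumption and $\{\theta_1,\tau\}$ generate $A[2]$, having also $\psi_i^*P_\tau=0$ would force $\hat\psi_i$ to factor through $2_A$, contradicting primitivity of $\psi_i$. Your proposal never brings $\tau$ into play, and without it the injectivity direction does not close.
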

Before proceeding with the proof we need to set up some notation. Recall that we have the following diagram
$$
\xymatrix{A\times B\ar[d]_{\phi}\ar[rr]^{\pi_{\tilde{S}}}&&\tilde{S}\ar[d]^{\tilde{\pi}}\\
X\ar[rr]_{\pi_S}&&S}
$$
where $\tilde{S}$ is a bielliptic surface of type 1.
We have that $S\simeq X/\sigma$, $\tilde{S}\simeq A\times B/\tilde{\sigma}$ and $X\simeq A\times B/\Sigma$, where $\Sigma$  denotes the translation $t_{(\theta_1,\theta_2)}$. We are going to deal just with the case in which $B$ hax complex multiplication. The proof in the other case will be identical, provided that one drops one of the two generators. We first observe the following fact:
\begin{lemma}\label{lem:Sigmainv}
In the notation above suppose that $B$ has complex multiplication and let $L_i$ be the line bundle $(1\times\psi_i)^*\sP_A$, for $i=1,2$. Then the conditions of Theorem \ref{thm:type 2} are satisfied if, and only if,  for every $\gamma\in\Pic^0(A\times B)$ one of the following line bundles is not $\Sigma$-invariant: 
\begin{equation}\label{eq:type2inv}
    L_1\otimes \gamma,\; L_2\otimes\gamma, L_1\otimes L_2\otimes\gamma.
\end{equation}
\end{lemma}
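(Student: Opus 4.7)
The plan is to reduce the $\Sigma$-invariance of each of the three line bundles in \eqref{eq:type2inv} to an explicit condition on $\psi_1, \psi_2, \theta_1, \theta_2$ and then match this list against the list of six elements appearing in the statement of Theorem \ref{thm:type 2}. The key observation is that a line bundle of the form $L_i\otimes\gamma$, with $\gamma\in\Pic^0(A\times B)$, is $\Sigma$-invariant if and only if $L_i$ is: indeed, since $\Sigma$ is a translation on an abelian variety, $\Sigma^*\gamma\simeq\gamma$, so the obstruction $\Sigma^*(L_i\otimes\gamma)\otimes(L_i\otimes\gamma)^{-1}$ coincides with $\Sigma^*L_i\otimes L_i^{-1}$.

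First I would compute $\Sigma^*L_i\otimes L_i^{-1}$ explicitly. Writing $L_i=(1\times\psi_i)^*\sP_A$ and noting that $(1\times\psi_i)\circ\Sigma=t_{(\theta_1,\psi_i(\theta_2))}\circ(1\times\psi_i)$, the standard formula for the pullback of the (normalized) Poincaré bundle by a translation, $t^*_{(a,b)}\sP_A\simeq\sP_A\otimes\mathrm{pr}_1^*P_b\otimes\mathrm{pr}_2^*P_a$, gives after pullback
\[
\Sigma^*L_i\otimes L_i^{-1}\;\simeq\;p_A^*P_{\psi_i(\theta_2)}\otimes p_B^*\psi_i^*P_{\theta_1}.
\]
By the Künneth decomposition this line bundle is trivial exactly when both $P_{\psi_i(\theta_2)}\simeq\sO_A$ (that is, $\psi_i(\theta_2)=p_{0,A}$) and $\psi_i^*P_{\theta_1}\simeq\sO_B$. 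For the third line bundle $L_1\otimes L_2$, the bilinearity of $\sP_A$ as a biextension yields $L_1\otimes L_2\simeq(1\times(\psi_1+\psi_2))^*\sP_A$, so the same computation applies with $\psi_i$ replaced by $\psi_1+\psi_2$.

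Combining these three computations, the three line bundles $L_1\otimes\gamma$, $L_2\otimes\gamma$, $L_1\otimes L_2\otimes\gamma$ are \emph{all} $\Sigma$-invariant (for one, hence for every $\gamma\in\Pic^0(A\times B)$) if and only if the six elements
\[
\psi_1(\theta_2),\;\psi_2(\theta_2),\;(\psi_1+\psi_2)(\theta_2),\;\psi_1^*P_{\theta_1},\;\psi_2^*P_{\theta_1},\;(\psi_1+\psi_2)^*P_{\theta_1}
\]
are all trivial. Negating, the conditions of Theorem \ref{thm:type 2}(2) are equivalent to the assertion that for every $\gamma\in\Pic^0(A\times B)$ at least one of the three line bundles in \eqref{eq:type2inv} fails to be $\Sigma$-invariant, which is exactly the statement of the lemma. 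The only subtle point, and the main thing to verify carefully, is the translation formula for the normalized Poincaré bundle and the compatibility of the maps $1\times\psi_i$ with the translation $\Sigma$; in the non-CM case the very same argument applies, with $L_2$ and $L_1\otimes L_2$ simply omitted.
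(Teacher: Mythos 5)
Your proof is correct and takes essentially the same route as the paper: the paper's proof consists precisely of the see-saw computation $\Sigma^*(L_i\otimes\gamma)\simeq L_i\otimes\gamma\otimes p_A^*P_{\psi_i(\theta_2)}\otimes p_B^*\psi_i^*P_{\theta_1}$ (and its analogue for $\psi_1+\psi_2$), which is exactly the formula you derive via the translation formula for $\sP_A$. Your added observations — that $\Sigma^*\gamma\simeq\gamma$ makes the invariance independent of $\gamma$, and that the Künneth decomposition of $\Pic^0(A\times B)$ splits the triviality condition into the two conditions of Theorem \ref{thm:type 2} — are correct and only make explicit what the paper leaves to the reader.
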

\begin{proof}
By see-saw, it is easy to see that
\begin{align*}
\Sigma^*[(1\times\psi_i)^*\sP_A\otimes\gamma]&\simeq(1\times\psi_i)^*\sP_A\otimes\gamma\otimes p_A^*P_{\psi_i(\theta_2)}\otimes p_B^*\psi_i^*P_{\theta_1},\\
\Sigma^*[(1\times(\psi_1+\psi_2))^*\sP_A\otimes\gamma]&\simeq(1\times(\psi_1+\psi_2))^*\sP_A\otimes\gamma\\ &\quad\quad\otimes p_A^*P_{\psi_1+\psi_2(\theta_2)}\otimes p_B^*(\psi_1+\psi_2)^*P_{\theta_1};
\end{align*}
the statement follows directly.
\end{proof}
\begin{proof}[Proof of the sufficiency of the conditions of the Theorem \ref{thm:type 2}]
Suppose that the conditions of the statement are satisfied. Then, by Lemma \ref{lem:Sigmainv}, one of the line bundles \eqref{eq:type2inv} is not $\Sigma$-invariant. Suppose first that $L_1\otimes\gamma$ is not $\Sigma$-invariant for every topologically trivial $\gamma$. Thus we have that $l(0,0,\psi_1)$ is not in $\phi^*\operatorname{Num}(X)$. 
We deduce that
\begin{equation}\label{eq:type2num}
2\cdot\psi_1\notin(1-\tilde{\sigma})^*\phi^*\operatorname{Num}(X).
\end{equation}
Otherwise we would have
\begin{align*}
  2\cdot\psi_1&= (1-\tilde{\sigma})^*\phi^*\varphi\\
  &=(1-\tilde{\sigma})^*(h\cdot\psi_1+k\cdot\psi_2)\\
  &=2h\cdot \psi_1+2k\cdot\psi_2.
\end{align*}
Therefore $h=1$, $k=0$ and $\phi^*\varphi=\psi_1$, contradicting our previous conclusion.
\indent Now consider the line bundle $L:=\operatorname{Nm}_\phi((1\times\psi_1)^*\sP_A)$. We want to show that there is $\beta\in\Pic^0(X)$ such that $\operatorname{Nm}_{\pi_S}(L\otimes\beta)$ is trivial. We use the functoriality of the norm map (Proposition \ref{prop:functorial}) and we obtain that $$\operatorname{Nm}_{\pi_S}(L)\simeq \operatorname{Nm}_{\tilde{\pi}}\circ\operatorname{Nm}_{\pi_{\tilde{S}}}((1\times \psi_1)^*\sP_A).$$
Observe that by \eqref{eq:important type 1} we have that $\pi_{\tilde{S}}^*\operatorname{Nm}_{\pi_{\tilde{S}}}((1\times\psi_1)^*\sP_A)$ is numerically trivial. Therefore we have that $\operatorname{Nm}_{\pi_{\tilde{S}}}((1\times\psi_1)^*\sP_A)$ is itself numerically trivial. This implies that $$\operatorname{Nm}_{\tilde{\pi}}\circ\operatorname{Nm}_{\pi_{\tilde{S}}}((1\times \psi)^*\sP_A)\in\Pic^0(S).$$ In fact if we have that $\operatorname{Nm}_{\pi_{\tilde{S}}}((1\times\psi_1)^*\sP_A):=\alpha\in\Pic^0(\tilde{S})$ then we write $\alpha\simeq\tilde{\pi}^*\gamma$ and we have that 
$$\operatorname{Nm}_{\pi_S}(L)\simeq \operatorname{Nm}_{\tilde{\pi}}\circ\operatorname{Nm}_{\pi_{\tilde{S}}}((1\times \psi_1)^*\sP_A)\simeq\gamma^{\otimes 2}.$$
On the other side if $\operatorname{Nm}_{\pi_{\tilde{S}}}((1\times\psi_1)^*\sP_A):=T$ a numerically trivial but not algebraically trivial line bundle, then as in \eqref{eq:tau13} we have that $\operatorname{Nm}_{\tilde{\pi}}(T)$ is topologically trivial. Thus, as before, we obtain $\beta$ such that $\operatorname{Nm}_{\pi_S}(L\otimes\beta)\simeq\sO_S$ via the $\Pic^0$ trick (Remark \ref{rmk:trick}).\par
In order to determine the non injectivity of the Brauer map we have to ensure that $L\otimes \beta$ is not in $\operatorname{Im}(1-\sigma^*)$. Suppose that this were not the case, and consider the following commutative diagram
$$
\xymatrix{A\times B\ar[rr]^{\tilde{\sigma}}\ar[d]_\phi&& A\times B\ar[d]^\phi\\
X\ar[rr]_\sigma&&X.}
$$
Then $c_1(\phi^*L)\in(1-\tilde{\sigma})^*\phi^*\operatorname{Num}(X)$. However the properties of the norm (see \eqref{eq:pullbackofnorm}) ensures that $c_1(\phi^*L)=l(0,0,2\cdot\psi_1)$, thus we would have that  $l(0,0,2\cdot\psi_1)\in\phi^*\operatorname{Num}(X)$, contradicting \eqref{eq:type2num}.\par
If $L_2\otimes \gamma$ is not $\Sigma$-invariant for every $\gamma\in\Pic^0(A\times B)$, then we proceed as before by exchanging the role of $\psi_1$ and $\psi_2$. Thus, it remain only to see what happen if $L_1\otimes L_2\otimes \gamma$ is not $\Sigma$-invariant for every $\gamma$. In this case we will have that $l(0,0,\psi_1+\psi_2)\notin\phi^*\operatorname{Num}(A\times B)$, and so either $l(0,0,\psi_1)$ or $l(0,0,\psi_2)$ are not in the image of $\phi^*$. Without loss of generality we can assume the first. Then we will still have \eqref{eq:type2num} and we can repeat the above argument.

\end{proof}
In order to complete the proof of Theorem \ref{thm:type 2} we need to show that if all $(1\times\psi_1)^*\sP_A$, $(1\times\psi_2)^*\sP_A$, and $(1\times(\psi_1+\psi_2))^*\sP_A$ are $\Sigma$-invariant then the Brauer map to $X$ is injective. Observe that, under this assumptions, we can write 
$$
(1\times\psi_1)^*\sP_A\simeq \phi^*L_1,\quad (1\times\psi_2)^*\sP_A\simeq\phi^* L_2,\quad\text{and}\quad (1\times(\psi_1+\psi_2))^*\sP_A\simeq\phi^* L_3.
$$
for some line bundles $L_1$, $L_2$, and $L_3$ in $\Pic(X)$. Then for $\alpha \in \Pic^0(X)$ write $\phi^*\alpha\simeq p_A^*\alpha_1\otimes p_B^*\alpha_2$ we have
\begin{align*}
\phi^*(\pi_S^*\operatorname{Nm}_{\pi_S}(L_i\otimes\alpha))&\simeq \phi^*(L_i\otimes\alpha\otimes\sigma^*(L_i\otimes \alpha))\simeq p_A^*\alpha_1^{\otimes 2}\otimes p_B^*(\psi_1^*P_\tau),\\
\phi^*(\pi_S^*\operatorname{Nm}_{\pi_S}(L_1\otimes L_2\otimes\alpha ))&\simeq \phi^*(L_1\otimes L_2\otimes\alpha\otimes\sigma^*(L_1\otimes L_2\otimes\alpha))\\
&\simeq p_A^*\alpha_1^{\otimes 2}\otimes p_B^*(\psi_1^*P_\tau\otimes\psi_2^*P_\tau);
\end{align*}
where, in both cases, the  last equality is again given by \eqref{eq:important type 1}.
Observe that neither the $\psi_i$'s nor $\psi_1+\psi_2$ can factor through the multiplication by 2 isogeny, or we would have that $\psi_1$ and $\psi_2$ cannot generate $\Hom(B,A)$. In particular,   we cannot have that neither $\psi_i^*P_\tau$ nor $(\psi_1+\psi_2)^*P_\tau$  can be trivial. We deduce that
\begin{align*}
\phi^*(\pi_S^*\operatorname{Nm}_{\pi_S}(L_i\otimes\alpha))&\not\simeq\sO_{A\times B},\\
    \phi^*(\pi_S^*\operatorname{Nm}_{\pi_S}(L_1\otimes L_2\otimes\alpha))&\not\simeq\sO_{A\times B}.
\end{align*}
In particular  we obtained the following lemma:
\begin{lemma}\label{lem:notker}
In the above notation, if the conditions of Theorem \ref{thm:type 2} are not satisfied, then line bundles numerically equivalent to $L_i$ or $L_1\otimes L_2$ are not in the kernel of the norm map $\operatorname{Nm}_{\pi_S}$.
\end{lemma}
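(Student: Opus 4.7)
The proof is essentially a direct consequence of the computation displayed immediately before the statement. The plan is as follows.

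First, since $X$ is an abelian surface we have $\Pic(X)/\Pic^0(X)\simeq\Num(X)$, so every line bundle numerically equivalent to $L_i$ (respectively $L_1\otimes L_2$) can be written as $L_i\otimes\alpha$ (respectively $L_1\otimes L_2\otimes\alpha$) for some $\alpha\in\Pic^0(X)$. Hence the lemma reduces to showing that neither $\operatorname{Nm}_{\pi_S}(L_i\otimes\alpha)$ nor $\operatorname{Nm}_{\pi_S}(L_1\otimes L_2\otimes\alpha)$ is isomorphic to $\sO_S$ as $\alpha$ varies over $\Pic^0(X)$.

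I would then argue by contradiction. Suppose $\operatorname{Nm}_{\pi_S}(L_i\otimes\alpha)\simeq\sO_S$; then $\pi_S^*$ of this is trivial, and composing with $\phi^*$ shows that the pullback to $A\times B$ is also trivial. However, the display produced just before the lemma rewrites this pullback explicitly as
\begin{equation*}
\phi^*\pi_S^*\operatorname{Nm}_{\pi_S}(L_i\otimes\alpha)\;\simeq\;p_A^*\alpha_1^{\otimes 2}\otimes p_B^*(\psi_i^*P_\tau),\qquad\phi^*\alpha\simeq p_A^*\alpha_1\otimes p_B^*\alpha_2.
\end{equation*}
Restricting this line bundle to a fiber $\{a\}\times B$ of $p_A$ collapses the first factor and recovers $\psi_i^*P_\tau$, which is nontrivial as recalled right above the statement: since $\psi_1,\psi_2$ is a system of generators of $\Hom(B,A)$, none of $\psi_1,\psi_2,\psi_1+\psi_2$ can be divisible by $2$ in this lattice, forcing $\psi_i^*P_\tau$ and $(\psi_1+\psi_2)^*P_\tau$ to be nontrivial. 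This gives the required contradiction. The case of a line bundle numerically equivalent to $L_1\otimes L_2$ is dealt with verbatim, replacing $\psi_i$ by $\psi_1+\psi_2$.

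Most of the work---the explicit pullback formula and the nontriviality of the relevant $P_\tau$-pullbacks---has already been assembled in the paragraph preceding the statement, so the proof of the lemma itself is very short. The only possibly delicate point is the passage from non-divisibility by $2$ in $\Hom(B,A)$ to the nonvanishing of $\psi_i^*P_\tau$; since this is treated above, it can be invoked directly here.
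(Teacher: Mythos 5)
Your proof is correct and takes essentially the same route as the paper: the paper obtains the lemma as an immediate consequence of the displayed computation $\phi^*\bigl(\pi_S^*\operatorname{Nm}_{\pi_S}(L_i\otimes\alpha)\bigr)\simeq p_A^*\alpha_1^{\otimes 2}\otimes p_B^*(\psi_i^*P_\tau)$ together with the nontriviality of $\psi_i^*P_\tau$ and $(\psi_1+\psi_2)^*P_\tau$, exactly as you do (you merely phrase it contrapositively and make explicit the reduction of ``numerically equivalent to $L_i$'' to ``of the form $L_i\otimes\alpha$'' and the restriction to a fiber of $p_A$). The one delicate point you flag is shared with the paper: non-divisibility of $\psi_i$ by $2$ alone does not rule out $\psi_i^*P_\tau\simeq\sO_B$ (a primitive degree-$2$ isogeny can kill one prescribed $2$-torsion class), and the clean justification also uses the standing hypothesis that the conditions of Theorem \ref{thm:type 2} fail, so that $\psi_i^*P_{\theta_1}$ is already trivial and a second vanishing would force $\psi_i^*$ to kill all of $\hat{A}[2]$, i.e.\ $\psi_i$ to be divisible by $2$.
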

Before going further we need an intermediate step:
\begin{lemma}\label{lem:type2im}
For any integer $n$, $L_i^{\otimes 2n}$ and $(L_1\otimes L_2)^{\otimes 2n}$ are in $\operatorname{Im}(1-\sigma^*)$
\end{lemma}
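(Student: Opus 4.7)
The plan is to reduce to showing $L_i^{\otimes 2}\in\operatorname{Im}(1-\sigma^*)$ (the case of $L_1\otimes L_2$ is parallel, obtained by replacing $\psi_i$ by $\psi_1+\psi_2$ and using that the negation of the hypothesis of Theorem \ref{thm:type 2} also gives $(\psi_1+\psi_2)(\theta_2)=0_A$ and $(\psi_1+\psi_2)^*P_{\theta_1}\simeq\sO_B$). Since $\operatorname{Im}(1-\sigma^*)$ is a subgroup of $\Pic(X)$, closure under powers then yields $L_i^{\otimes 2n}$ and $(L_1\otimes L_2)^{\otimes 2n}$ for all $n$. The first simplification I would use is the identity $L_i^{\otimes 2}\simeq(1-\sigma^*)L_i\otimes \pi_S^*\operatorname{Nm}_{\pi_S}(L_i)$, which follows from $\pi_S^*\operatorname{Nm}_{\pi_S}(L_i)\simeq L_i\otimes\sigma^*L_i$; this reduces the question to showing $\pi_S^*\operatorname{Nm}_{\pi_S}(L_i)\in\operatorname{Im}(1-\sigma^*)$.

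To produce a candidate preimage I would work on $A\times B$. Pulling back via $\phi$ and using the see-saw identity $\tilde{\sigma}^*(1\times\psi_i)^*\sP_A\simeq (1\times\psi_i)^*\sP_A^{-1}\otimes p_B^*\psi_i^*P_\tau^{-1}$ (as in \eqref{eq:important type 1}), I obtain $\phi^*\pi_S^*\operatorname{Nm}_{\pi_S}(L_i)\simeq p_B^*\psi_i^*P_\tau^{-1}$, a $2$-torsion element of $\Pic^0(A\times B)$. Since $\Pic^0(B)$ is divisible I can pick $L\in\Pic^0(B)$ with $L^{\otimes 2}\simeq\psi_i^*P_\tau^{-1}$. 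The line bundle $p_B^*L$ is $\Sigma$-invariant (because $t_{\theta_2}^*L\simeq L$ for $L\in\Pic^0(B)$), so it descends to some $N_0\in\Pic(X)$, and a direct computation using $[-1]^*L\simeq L^{-1}$ yields $(1-\tilde{\sigma}^*)p_B^*L\simeq p_B^*L^{\otimes 2}\simeq p_B^*\psi_i^*P_\tau^{-1}$. Hence $(1-\sigma^*)N_0$ and $\pi_S^*\operatorname{Nm}_{\pi_S}(L_i)$ share the same $\phi^*$-pullback, so they differ by an element of $\operatorname{Ker}(\phi^*)=\langle\eta_\phi\rangle$, where $\eta_\phi\in\Pic(X)[2]$ is the class classifying the étale double cover $\phi$.

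The main subtlety is then to dispose of this $\eta_\phi$-ambiguity by proving that $\eta_\phi$ itself belongs to $\operatorname{Im}(1-\sigma^*)$. The key observation is that the commutative square
\[\xymatrix{A\times B\ar[r]^{\pi_{\tilde{S}}}\ar[d]_\phi& \tilde{S}\ar[d]^{\tilde{\pi}}\\ X\ar[r]_{\pi_S}& S}\]
is cartesian (since $\langle\tilde{\sigma}\rangle$ and $\langle\Sigma\rangle$ intersect trivially and together generate the deck group of $A\times B\to S$), so $\phi$ is the pullback of $\tilde{\pi}$ along $\pi_S$; consequently $\eta_\phi\simeq\pi_S^*\eta$, where $\eta\in\Pic(S)[2]$ is the class classifying $\tilde{\pi}$. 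By \eqref{eq:normofpullback} this gives $\operatorname{Nm}_{\pi_S}(\eta_\phi)\simeq\eta^{\otimes 2}\simeq\sO_S$, and since $X$ is an abelian surface $\Pic(X)[2]\subset\Pic^0(X)$; thus $\eta_\phi\in\Pic^0(X)\cap\operatorname{Ker}(\operatorname{Nm}_{\pi_S})$, and Proposition \ref{prop:brabpic0} yields $\eta_\phi\in\operatorname{Im}(1-\sigma^*)$, completing the argument.
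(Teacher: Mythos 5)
Your proof is correct, and its first half is the same as the paper's: both start from the identity $L_i^{\otimes 2}\simeq (1-\sigma^*)L_i\otimes\bigl(L_i\otimes\sigma^*L_i\bigr)$ and reduce to showing that $\gamma:=L_i\otimes\sigma^*L_i\simeq\pi_S^*\operatorname{Nm}_{\pi_S}(L_i)$ lies in $\operatorname{Im}(1-\sigma^*)$, using \eqref{eq:important type 1} to see that $\phi^*\gamma\in p_B^*\Pic^0(B)$. Where you diverge is in the second half. The paper simply observes that $\gamma$ must then lie in $p_B^*\Pic^0(B/H)\subset\Pic(X)$ and invokes Lemma \ref{lem:pbeta}, whose type-2 case was already proved directly on $X$ via the diagram intertwining $\sigma$ with $-1_{B/H}$; this is shorter, but it silently glosses over the fact that a line bundle on $X$ is only determined by its $\phi$-pullback up to the 2-torsion class $\eta_\phi$ generating $\operatorname{Ker}(\phi^*)$. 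You instead rebuild the relevant case of Lemma \ref{lem:pbeta} by hand (taking a square root of $\psi_i^*P_\tau$ on $B$, descending, and using $(-1_B)^*=(-1)$ on $\Pic^0$) and then confront the $\eta_\phi$-ambiguity head on, identifying $\eta_\phi\simeq\pi_S^*\eta$ via the cartesian square and killing it with \eqref{eq:normofpullback} and Proposition \ref{prop:brabpic0}. Each step checks out (the cartesian claim is correct since $\langle\Sigma\rangle$ and $\langle\tilde\sigma\rangle$ are complementary in the deck group, and $\Pic(X)_{\mathrm{tor}}\subset\Pic^0(X)$ because $X$ is abelian), so your argument is more explicit about a point the paper leaves implicit, at the cost of length; you could have shortened it by noting that $\eta_\phi$ itself lies in $p_B^*\Pic^0(B/H)$, so that $\gamma\in p_B^*\Pic^0(B/H)$ regardless of which descent of $\phi^*\gamma$ one takes, and then citing Lemma \ref{lem:pbeta} as the paper does. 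One cosmetic remark: $\Sigma$-invariance of $p_B^*L$ by itself is not a priori sufficient for descent, but here it is, either because $H^2(\Z/2\Z,\mathbb{C}^*)=0$ or, more concretely, because $L$ is already a pullback from $B/\langle\theta_2\rangle$ ($q_B^*$ being surjective on $\Pic^0$).
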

\begin{proof} Obviously it is enough to show that $L_i^{\otimes 2}$ is in the image of $(1-\sigma^*)$. To this aim, we pull $L_i\otimes\sigma^*L_i$ back to $A\times B$ and apply \eqref{eq:important type 1}. We see that $$\phi^*(L_i\otimes\sigma^*(L_i))\in p_B^*\Pic^0(B)\subseteq A\times B,$$
and we deduce that $\gamma:=L_i\otimes\sigma^*(L_i)$ is a  line bundle in $p_B^*\Pic^0(B/H)$. By \ref{lem:pbeta} we know that $\gamma\in\operatorname{Im}(1-\sigma^*)$. Thus we can write
$$
L_i^{\otimes 2}\simeq \gamma\otimes\sigma^*L_i\otimes L_i^{-1}.
$$
\end{proof}
\begin{proof}[Conclusion of the Proof of Theorem \ref{thm:type 2}]

Let $M$ is a line bundle such that $\operatorname{Nm}_{\pi_S}(M)\simeq\sO_S$, we will show that $M$ is in the image of $(1-\sigma^*)$. Using \eqref{eq:pullbackofnorm}, we know that $M\otimes\sigma^*M\simeq\sO_X$. By pulling back via $\phi$ we get that $\phi^*M\otimes\tilde{\sigma}^*\phi^*M$ is again trivial and by the proof of \ref{lem:potentialclasses} we see that $c_1(\phi^*M)=l(0,0,h\cdot\psi_1+k\cdot\psi_2)$ for two integers $h$ and $k$. Then we can write 
$$\phi^*M\simeq (1\times h\cdot\psi_1)^*\sP_A\otimes(1\times k\cdot \psi_2)^*\sP_A\otimes \gamma\simeq\phi^*(L_1^{\otimes h}\otimes L_2^{\otimes k})\otimes\gamma,
$$
for some $\gamma$ in $\Pic^0(A\times B)$. Therefore $\phi^*(M\otimes L_1^{\otimes -h}\otimes L_2^{\otimes -k})\simeq \gamma$, and we deduce that 
$M\simeq L_1^{\otimes h}\otimes L_2^{\otimes k}\otimes\alpha$ for some $\alpha\in\Pic^0(X)$. If $h$ and $k$ are both even, then by Lemma \ref{lem:type2im} we know that $\alpha\in \operatorname{Ker}\operatorname{Nm}_{\pi_S}$, and the class of $M$ in $\operatorname{Ker}\operatorname{Nm}_{\pi_S}/\operatorname{Im}(1-\sigma^*)$ is exactly $[\alpha]$. We apply Proposition \ref{prop:brabpic0} and deduce that $[M]=0$.\par
We will now show that neither one between $h$ and $k$ can be odd. Suppose otherwise that $h$ and $k$ are not both even. For example, assume that  $h$ is odd and $k$ is even, the proof in the other cases is very similar. Under this hypothesis, Lemma \ref{lem:type2im} ensures that $L_1\otimes\alpha$ is in the kernel of the norm map. But this contradicts Lemma \ref{lem:notker}, and our proof is complete
\end{proof}
\begin{ex}\label{last}
\begin{enumerate}[leftmargin=0cm,itemindent=1cm,labelwidth=.5cm,labelsep=.1cm,align=left,label=(\alph*)]
    \item Suppose that $A\simeq B$, then the isogenies $\psi_1$ and $\psi_2$ are indeed isomorphisms and thus the Brauer map can never be injective.
    \item Let $B$ be an elliptic curve without complex multiplication and consider $\theta_2$ a point of order 2 in $B$. Let $A$ be the elliptic curve $B/<\theta_2>$ and $\psi:B\rightarrow A$ the quotient map. The dual map $\psi^*$ has degree 2. Let $\theta_1\in A$ the point such that $\psi^*P_{\theta_1}$ is trivial and let $\tau$ be another order 2 element of $A$. All this data identify a bielliptic surface of type 2 whose Brauer map to the canonical cover is injective.
\end{enumerate}
\end{ex}
\newpage
\appendix
\section{The homomorphism lattice of two elliptic curves}
\pagestyle{headings}
\markleft{{JONAS BERGSTR\"OM AND SOFIA TIRABASSI}}
\markright{STRUCTURE OF $\operatorname{Hom}(A,B)$}
\begin{center}
\begin{small}
    {\scshape Jonas Bergstr\"om and Sofia Tirabassi}\\[\baselineskip]
\end{small}
\end{center}

The main goal of this appendix is to give a structure theorem for the $\mathbb{Z}$-module $\operatorname{Hom}(B,A)$ where $A$ and $B$ are two complex elliptic curves with $j(B)=0,1728$. This result has been used in \ref{subsec:NUMprod}
 above in order to make a clever choice of generators for $\operatorname{Num}(A\times B)$ which in turn has allowed an accurate description of the action of the automorphism $\sigma^*$ on the Neron--Severi group of the product $A\times B$ when $S$ is a bielliptic surface of type 3 or 5.\par
 If $B$ is an elliptic curve with $j$-invariant 0 or 1728, then $B$ admits an automorphism $\lambda_B$ of order 3 or 4 respectively. The main result of this Appendix is that the group $\operatorname{Hom}(B,A)$ can be completely described in terms of $\lambda_B$ and an isogeny $\psi:B\rightarrow A$. More precisely we have the following statement:
 \begin{thm}\label{thm:appendix2}
Let $A$ and $B$ two isogenous complex elliptic curves with $j(B)$ is either 0 or 1728. Then there exist an isogeny $\psi:B\rightarrow A$ such that 
$$\operatorname{Hom}(B,A)=<\psi,\psi\circ\lambda_B>.$$
\end{thm}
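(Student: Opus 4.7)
The strategy is to show that $\Hom(B,A)$, viewed as a right $\End(B)$-module under precomposition, is free of rank one; the theorem then follows by picking any generator $\psi$, because $\{1,\lambda_B\}$ is a $\Z$-basis of $\End(B)$, so $\{\psi,\psi\circ\lambda_B\}$ is a $\Z$-basis of $\Hom(B,A)$.

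First I would make the structure of $\End(B)$ explicit. Writing $B\cong\IC/\Lambda$, the hypothesis $j(B)\in\{0,1728\}$ implies that $\Lambda$ is homothetic either to $\Z[\zeta_3]$ (when $j(B)=0$) or to $\Z[i]$ (when $j(B)=1728$), and a direct verification shows $\End(B)=\{\alpha\in\IC:\alpha\Lambda\subseteq\Lambda\}=\Z[\lambda_B]$, where $\lambda_B$ corresponds respectively to $\zeta_3$ or $i$. Both rings are Euclidean domains, hence principal ideal domains, and in both $\{1,\lambda_B\}$ is visibly a $\Z$-basis.

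Next I would check that $\Hom(B,A)$ is torsion-free of rank one over $\End(B)$. Torsion-freeness is immediate from the inclusion $\Hom(B,A)\hookrightarrow\Hom(B,A)\otimes\Q$. For the rank, I would use the hypothesis that $A$ and $B$ are isogenous: picking a nonzero isogeny $\phi:B\to A$ with dual $\hat\phi$, the assignment $\alpha\mapsto(\deg\phi)^{-1}\phi\circ\alpha\circ\hat\phi$ is a $\Q$-algebra embedding $K:=\End(B)\otimes\Q\hookrightarrow\End(A)\otimes\Q$, and equality follows by dimension count. Consequently $\Hom(B,A)\otimes\Q$ is one-dimensional over $K$, so $\Hom(B,A)$ has rank one over $\End(B)$. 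A finitely generated torsion-free module of rank one over a PID is free of rank one by the structure theorem, so $\Hom(B,A)\simeq\End(B)$ as $\End(B)$-modules, and the conclusion follows.

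The only technical point that deserves care is the identification $K=\End(A)\otimes\Q$, but this is the standard fact that isogenous complex elliptic curves share the same endomorphism algebra; beyond that, no substantial obstacle remains.
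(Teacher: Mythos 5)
Your argument is correct and rests on the same key fact as the paper's proof: $\Hom(B,A)$ is a rank-one torsion-free module (equivalently, a fractional ideal) over $\End(B)=\Z[\lambda_B]$, which is the full ring of integers of $\Q(i)$ or $\Q(\zeta_3)$ and has trivial class group, so the module is free of rank one and any generator $\psi$ works. The only difference is packaging — you invoke the structure theorem over a Euclidean domain where the paper identifies $\Hom(B,A)$ with a fractional ideal inside $\mathbb{C}$ and cites class number one — and these are equivalent formulations of the same step.
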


This appendix is organized in three main parts. In the first we outline some classical results about imaginary quadratic fields and their orders. The second is concerned with complex elliptic curves with complex multiplication. Theorem \ref{thm:appendix2}
 is proven in \ref{appendixmain}. The key idea of our argument is to describe $\operatorname{Hom}(B,A)$ as a fractional ideal of $\operatorname{End}(B)$ homothetic to $\operatorname{End}(B)$. This is done by observing that the class number of $\operatorname{End}(B)$ is 1.
 
 \subsection{Preliminaries on orders in imaginary quadratic fields}
An \emph{imaginary quadratic field} is a subfield $K\subseteq \mathbb{C}$ of the form $\mathbb{Q}(\sqrt{-d})$, with $d$ a positive, square-free integer. The \emph{discriminant of $K$} is  the integer $d_k$ defined as
\begin{equation*}
d_K =\begin{cases}
-d, &\text{if $d\equiv 3\mod 4$},\\
-4d, &\text{otherwise}.
\end{cases}
\end{equation*}
The \emph{ring of integers of $K$}, $\mathcal{O}_K$ is the largest subring of $K$ which is a finitely generated abelian group. Then we have that $\mathcal{O}_K=\mathbb{Z}[\delta]$, where 
\begin{equation}\label{eq;DELTA}
\delta =\begin{cases}
\frac{1+\sqrt{-d}}{2}, &\text{if $d\equiv 3\mod 4$},\\
\sqrt{-d}, &\text{otherwise}.
\end{cases}
\end{equation}
An \emph{order} in an imaginary quadratic field $K$ is a subring $\mathcal{O}$ of $\mathcal{O}_K$ which properly contains $\Z$. It turns out that $\mathcal{O}\simeq \mathbb{Z}+\Z\cdot (n\delta)$ for some positive integer $n$.\par
Given an order $\mathcal{O}$ in an imaginary quadratic field $K$, a \emph{fractional ideal} of $\mathcal{O}$ is a non-zero finitely generated sub $\mathcal{O}$-module of $K$. For every  fractional ideal $M$ of $\mathcal{O}$ there is an $\alpha\in K^*$ and an ideal $\mathfrak{a}$ of $\mathcal{O}$ such that $M=\alpha\cdot \mathfrak{a}$.
We will need the following notions.
\begin{defi}
\begin{enumerate}[leftmargin=0cm,itemindent=1cm,labelwidth=.5cm,labelsep=.1cm,align=left,label=(\roman*)]
    \item Two fractional $\mathcal{O}$-ideals $M$ and $M'$ are \emph{homothetic} if there is $\alpha\in K^*$ such that $M=\alpha M'$.
    \item A fractional $\mathcal{O}$-ideal is \emph{invertible} if there is a fractional ideal $M'$ such that $M\cdot M'=\mathcal{O}$. The set of invertible $\mathcal{O}$-ideals is denoted by $I(\mathcal{O})$.
    \item A fractional $\mathcal{O}$-ideal $M$ is principal if it is of the form $\alpha\cdot\mathcal{O}$ for some $\alpha\in K^*$. So principal ideals are precisely the fractional ideals homothetic to $\mathcal{O}$. The set of principal $\mathcal{O}$-ideals is denoted by $P(\mathcal{O})$.
\end{enumerate}
\end{defi}
Principal ideals are clearly invertible. In general not all fractional ideals are invertible, but they are so if $\mathcal{O}=\mathcal{O}_K$ (see also \cite[Proposition 5.7]{cox2011primes}). 
The quotient 
$$
\mathfrak{Cl}(\mathcal{O}):=I(\mathcal{O})/P(\mathcal{O})
$$
describes the homothety classes of invertible $\mathcal{O}$-ideals. It is a group with the product and it is called the \emph{ideal class group of $\mathcal{O}$}. Its order is finite and is called the \emph{class number of $\mathcal{O}$}. When $\mathcal{O}=\mathcal{O}_K$, then the class number of $\mathcal{O}$ is exactly the class number of the field $K$, which is a function of the discriminant of $K$ (see \cite[Theorem 5.30(ii)]{cox2011primes}). More generally the class number of $\mathcal{O}$ is a general function of $d_K$ and $[\mathcal{O}:\mathcal{O}_K]$.
\begin{ex}\label{classnm}
If $K$ is either $\mathbb{Q}(i)$ or $\mathbb{Q}(\sqrt{-3})$, then all the fractional ideals of $\mathcal{O}_K$ are homothetic to $\mathcal{O}_K$. In fact the class number of the field $K$ in this cases is 1, as it was computed by Gauss in his book \emph{Disquisitiones arithmeticae} \nocite{gauss1966disquisitiones}.
\end{ex}
\subsection{Elliptic curves with complex multiplication}\label{sub:CM}
 The importance of orders in the study of the geometry of elliptic curves is that they describe the endomorphism rings of a complex elliptic curves:
\begin{thm}
Let $A$ be an elliptic curve over $\mathbb{C}$, then $\End(A)$ is  either isomorphic to $\Z$ or to an order in an imaginary quadratic field.
\end{thm}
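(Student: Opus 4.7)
The plan is to pass to the analytic picture and represent endomorphisms as complex numbers preserving the period lattice. Write $A \simeq \mathbb{C}/\Lambda$ for some rank $2$ lattice $\Lambda \subset \mathbb{C}$. Any holomorphic endomorphism $f\colon A \to A$ fixing the origin lifts to a $\mathbb{C}$-linear endomorphism of the universal cover $\mathbb{C}$, i.e.\ to multiplication by some $\alpha \in \mathbb{C}$; the descent condition is exactly $\alpha\Lambda \subseteq \Lambda$. Conversely every such $\alpha$ defines an endomorphism, so
\[
\End(A) \;\simeq\; \{\alpha \in \mathbb{C} : \alpha\Lambda \subseteq \Lambda\},
\]
and in particular $\End(A)$ is a commutative integral domain (as a subring of $\mathbb{C}$).

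Next I would bound the size of $\End(A)$. Fixing a $\mathbb{Z}$-basis $\omega_1,\omega_2$ of $\Lambda$, multiplication by $\alpha$ is represented in this basis by an integer matrix $M_\alpha \in M_2(\mathbb{Z})$, so we get a ring embedding $\End(A) \hookrightarrow M_2(\mathbb{Z})$. In particular $\End(A)$ is finitely generated and torsion-free of rank at most $2$ as a $\mathbb{Z}$-module, and each $\alpha \in \End(A)$ is a root of its characteristic polynomial
\[
p_\alpha(t) \;=\; t^2 - \mathrm{tr}(M_\alpha)\,t + \det(M_\alpha) \;\in\; \mathbb{Z}[t],
\]
hence an algebraic integer of degree $\leq 2$. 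Since the eigenvalues of the real representation of multiplication by $\alpha$ on $\Lambda \otimes \mathbb{R} \simeq \mathbb{C}$ are $\alpha$ and $\bar\alpha$, we read off $\alpha + \bar\alpha, \alpha\bar\alpha \in \mathbb{Z}$.

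The key delicate step -- and I expect it to be the main obstacle -- is excluding a real quadratic situation, i.e.\ showing $\End(A) \cap \mathbb{R} = \mathbb{Z}$. If $\alpha \in \End(A)$ is real, then $\alpha = \bar\alpha$ and the previous relations give $2\alpha \in \mathbb{Z}$ and $\alpha^2 \in \mathbb{Z}$, forcing $\alpha \in \mathbb{Z}$. This rules out any real quadratic irrationality inside $\End(A)$.

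Finally, if $\End(A) = \mathbb{Z}$ we are done. Otherwise choose $\alpha \in \End(A) \setminus \mathbb{Z}$; by the previous paragraph $\alpha \notin \mathbb{R}$, and $\alpha$ is an algebraic integer of degree exactly $2$, so $K := \mathbb{Q}(\alpha) \subset \mathbb{C}$ is an imaginary quadratic field. Since $\End(A) \otimes \mathbb{Q}$ is a $\mathbb{Q}$-algebra of dimension $\leq 2$ containing $K$, equality holds: $\End(A) \otimes \mathbb{Q} = K$. Thus $\End(A)$ is a subring of $K$ consisting of algebraic integers, i.e.\ sits inside $\mathcal{O}_K$, is finitely generated over $\mathbb{Z}$, and strictly contains $\mathbb{Z}$; by the definition recalled in the previous subsection, this is precisely an order in $K$, concluding the proof.
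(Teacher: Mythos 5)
Your proof is correct in substance, and it is worth noting that the paper does not actually prove this statement at all: its ``proof'' is the single line ``See [Silverman, Thm.~VI.5.5]''. What you have written out is essentially the standard analytic argument that the cited reference gives, so you are not diverging from the paper so much as supplying the content it delegates. The identification $\End(A)\simeq\{\alpha\in\mathbb{C}:\alpha\Lambda\subseteq\Lambda\}$, the integrality of $\alpha+\bar\alpha$ and $\alpha\bar\alpha$ via the characteristic polynomial of $M_\alpha$, and the exclusion of real irrational $\alpha$ (from $2\alpha\in\mathbb{Z}$ and $\alpha^2\in\mathbb{Z}$) are all right, and the conclusion matches the paper's definition of an order as a subring of $\mathcal{O}_K$ properly containing $\mathbb{Z}$.

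The one step you should tighten is the claim that $\End(A)$ has rank at most $2$ over $\mathbb{Z}$: the embedding $\End(A)\hookrightarrow M_2(\mathbb{Z})$ by itself only bounds the rank by $4$, and you do use the bound of $2$ at the end when you argue that $\End(A)\otimes\mathbb{Q}$ has dimension $\leq 2$ and hence equals $K$. The standard fix is short: $\End(A)\otimes\mathbb{Q}$ is a commutative domain (a subring of $\mathbb{C}$) that is finite-dimensional over $\mathbb{Q}$, hence a field, and it acts faithfully on $\Lambda\otimes\mathbb{Q}\simeq\mathbb{Q}^2$, which therefore becomes a vector space over that field; so its degree over $\mathbb{Q}$ divides $2$. (Alternatively, one can argue directly that any two elements of $\End(A)\setminus\mathbb{Z}$ generate the same quadratic field, since every $\mathbb{Z}$-linear combination of them is again quadratic over $\mathbb{Q}$.) With that sentence added, the proof is complete.
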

\begin{proof}
See \cite[Theorem VI.5.5]{Sil2009}.
\end{proof}
We say that a (complex) elliptic curve has \emph{complex multiplication} if its endomorphisms ring is larger than $\mathbb{Z}$. Observe that in this case $\End(A)\otimes\mathbb{Q}$ is a quadratic field $K$ and $\End(A)$ is an order in $K$.\par
Given a complex elliptic curve $A$ there is a canonical way to identify its endomorphisms ring with a subring of $\mathbb{C}$. More generally let $A$ and $B$ two elliptic curves, then there are two lattices $\Lambda_A$ and $\Lambda_B$ in $\mathbb{C}$ such that $A\simeq\mathbb{C}/\Lambda_A$ and $B\simeq\mathbb{C}/\Lambda_B$. Given a complex number $\zeta$ such that $\zeta\cdot\Lambda_B\subseteq \Lambda_A$, the map $\Phi_\zeta:\mathbb{C}\rightarrow\mathbb{C}$ defined by $z\mapsto \zeta\cdot z$ descends to an (algebraic) homomorphism $\varphi_\zeta:B\rightarrow A$. It is possible to show (see \cite[VI.5.3(d)]{Sil2009}) that any morphism of elliptic curves preserving the origin is obtained in this way, and in particular we get an isomorphism of abelian groups
\begin{equation}\label{eq:homab}
    \Hom(B,A)\simeq\{\zeta\in\mathbb{C}\:|\:\zeta\cdot\Lambda_B\subseteq\Lambda_A\}\subseteq\mathbb{C}.
\end{equation}

By setting $B=A$ we get a ring isomorphism
$$
\End(A)\simeq \mathcal{O}:=\{\zeta\in\mathbb{C}\:|\:\zeta\cdot\Lambda_A\subseteq\Lambda_A\}\subseteq\mathbb{C}.
$$
The isomorphism $\zeta\mapsto\varphi_\zeta$ is characterized as the unique isomorphism $f:\mathcal{O}\rightarrow\End(A)$ such that, for any $\zeta\in \mathcal{O}$ and for every invariant form $\omega$ on $A$ we have that $f(\zeta)^*\omega=\zeta\cdot \omega$ (\cite[II.1.1]{Sil2013}).\par
\begin{nota}\label{not:lambdaA}
For an elliptic curve with complex multiplication $A$ such that $\End(A)\simeq \mathbb{Z}+\mathbb{Z}\cdot n\delta$, we will denote by $\lambda_A$ the isogeny $\varphi_{n\delta}:A\rightarrow A$ and we will say that \emph{$A$ has complex multiplication by $\lambda_A$}. 
\end{nota}
It is clear that, with this identification, $\End(A)=<1_A,\lambda_A>$ as a $\Z$-module.
\begin{ex}\label{ex:j=01728}
\begin{enumerate}[leftmargin=0cm,itemindent=1cm,labelwidth=.5cm,labelsep=.1cm,align=left,label=(\alph*)]
    \item Suppose that $B$ is an elliptic curve with $j$-invariant 0. Then we can write $B\simeq \mathbb{C}/\Lambda_B$, with $\Lambda_B=<1, e^{\frac{2\pi i}{3}}>$. Then $\End(B)\otimes\mathbb{Q}\simeq\mathbb{Q}(\sqrt{-3})$ and $\End(B)\simeq\mathcal{O}_K=\Z[\frac{1+\sqrt{-3}}{2}]$. We have that $\lambda_B$ is induced by the multiplication by $\frac{1+\sqrt{-3}}{2}$ and is an automorphism of $B$ satisfying $\lambda_B^2+\lambda_B+1_B=0$. This is exactly the automorphism which in \ref{sub:cancover}  was denoted by $\rho$ and which was used to construct bielliptic surfaces of type 5. 
    \item Suppose now that the $j$-invariant of $B$ is 1728. Then we can take $\Lambda_B=<1,i>$ and we have that $\End(B)\otimes\mathbb{Q}\simeq \mathbb{Q}(i)$. The endomorphisms ring of $B$ is isomorphic to $\mathbb{Z}[i]$ and the multiplication by $i$  induces an automorphism $\lambda_B$ such that $\lambda_B^2=-1_B$. This is the automorphism $\omega$ of $B$ used to construct bielliptic surfaces of type 3 in \ref{sub:cancover}.
\end{enumerate}
\end{ex}
\subsection{Proof of the main result}\label{appendixmain}
We are now ready to provide a proof for Theorem \ref{thm:appendix2}. Our key point will be the following:\\[\baselineskip]
\emph{Claim:} the $\mathbb{Z}$-module $\operatorname{Hom}(B,A)$ is isomorphic to a fractional ideal of $\mathcal{O}_K$.\\[\baselineskip]
Before proceeding with showing that this Claim is true, let us see how it implies the statement. We do this applying Example \ref{classnm} and deducing that all fractional $\mathcal{O}_K$-ideals are homothetic to $\mathcal{O}_K$. Therefore there exist $\alpha\in K^* $ such that 
$$
M\simeq\alpha\cdot\mathcal{O}_K=\alpha\cdot<1,\delta> =<\alpha,\alpha\cdot\delta>,
$$
where $\delta$ is like in \eqref{eq;DELTA}. But then we have that $\operatorname{Hom}(B,A)=<\varphi_\alpha,\varphi_\alpha\circ\lambda_B>$, and the statement is true.
\begin{proof}[Proof of the Claim]
Let $\Lambda_A=<1,\tau>$ a lattice in $\mathbb{C}$ such that $A\simeq\mathbb{C}/\Lambda_A$, and denote by $K\subseteq \mathbb{C}$ the quadratic field $\operatorname{End}(B)\otimes \mathbb{Q}
$. Then the ring $\operatorname{End}(B)$ is exactly the ring of integers $\mathcal{O}_K$. Observe that this is isomorphic to a lattice in  $\mathbb{C}$, and that $B\simeq \mathbb{C}/\mathcal{O}_K$ (See Example \ref{ex:j=01728}). \par
By \eqref{eq:homab} we can identify  $M:=\operatorname{Hom}(B,A)$ as a finitely generated subgroup of $\mathbb{C}$.  Composition on the right with endomorphism of $B$ gives to $M$ a structure of $\mathcal{O}_K$-module. 
Let $\alpha\neq 0$ denote an element of $\mathfrak{a}:=\operatorname{Hom}(A,B)$, identifyied with a subgroup of $\mathbb{C}$. Then clearly $\alpha\cdot M\subseteq \mathcal{O}_K$. We deduce that $M\subseteq K$ is a fractional ideal of $\mathcal{O}_K$, and the Claim is proven.
\end{proof}
\begin{rmk}
\begin{enumerate}[leftmargin=0cm,itemindent=1cm,labelwidth=.5cm,labelsep=.1cm,align=left,label=(\alph*)]
    
    \item For any order $\mathcal{O}$ in a quadratic extension of $\mathbb{Q}$ a representative of each homothety class of fractional ideals can be given as $I\cdot \mathcal{O}'$, where $\mathcal{O'}\supseteq\mathcal{O}$ is an over order and $I$ is an invertible fractional ideal (see \cite{marseglia2018computing}). The over order $S$ can be given a $\Z$-basis of the form $\{1,\delta\cdot f\}$ where $f$ is a positive integer.\par
    For any pair of isogenous complex elliptic curves with complex multiplication we have that $\operatorname{Hom}(B,A)$ is a fractional $\operatorname{End}(B)$ ideal. In addition, if we assume that $\operatorname{End}(B)$ has class number 1, we have that $B\simeq \mathbb{C}/\operatorname{End}(B)$. In fact, under this assumption  \cite[Corollary 10.20]{cox2011primes} yields that, there is just one elliptic curve up to isomorphism with endomorphism ring $\operatorname{End(B)}$.\par
    In conclusion, demanding that $\operatorname{End}(B)$ has class number 1 (instead of $j(B)$ being either 0 or 1728) is sufficient for Theorem \ref{appendixmain} to hold.\par
    So Theorem \ref{appendixmain} will hold for the 13 isomorphism classes of complex elliptic curves $B$ for which $\operatorname{End}(B)$ has discriminant -3, -4,-7, -8, -11, -12, -16, -19, -27, -28, -43, -67, -164 (see \cite[Example 11.3.2]{Sil2013}).
    \item It is clear from the proof that the role of $A$ and $B$ can be exchanged, so we have proven a structure theorem for $\operatorname{Hom}(A,B)$ when the endomorphism ring of one of the two curves has class number 1.
\end{enumerate}
\end{rmk}
Theorem \ref{appendixmain} is not constructive, in the sense that it does not provide a way to determine the isogeny $\psi$ such that $\psi$ and $\psi\circ\lambda_B$ generate $\operatorname{Hom}(B,A)$. We conclude this appendix by constructing $\psi$ in an easy example.
\begin{ex}\label{ex:computation}
Let $\Lambda$ be the lattice $<1,2i>\subseteq \mathbb{C}$, and consider $A:=\mathbb{C}/\Lambda.$ Consider the 2-torsion point $\tau:=(0,i)+\Lambda$ of $A$ and let $B$ be the quotient $A/<\tau>$. It is clear that $B$ has $j$-invariant 1728. We claim that $\operatorname{Hom}(B,A)=<\varphi_2,\varphi_2\circ\lambda_B>$.\par
We use first \eqref{eq:homab} and identify $\operatorname{Hom}(B,A)$ with a lattice in $\mathbb{C}$. Given $\alpha=(a+bi)\in\operatorname{Hom}(B,A),$
we have that both $\alpha$ and $\alpha\cdot i$ must be elements of $\Lambda$. We deduce that both $a$ and $b$ must be even integers and so $\operatorname{Hom}(B,A)=<2,2\cdot i>$. We conclude by observing that $\lambda_B$ is the automorphism of $B$ induced by multiplication by $i$.
\end{ex}

\bibliographystyle{alpha}
\bibliography{biblio}
\end{document}